\newtheorem{Theorem}{Theorem}
\newtheorem{Corollary}{Corollary}
\newtheorem{Proposition}{Proposition}
\newtheorem{Lemma}{Lemma}
\newtheorem{Claim}{Claim}
\theoremstyle{Definition}
\newtheorem{Definition}{Definition}
\newtheorem{Example}{Example}
\newtheorem{Question}{Question}
\theoremstyle{Remark}
\newtheorem{Remark}{Remark}
\def\leaderfill{\leaders\hbox to .8em{\hss .\hss}\hfill}
\def\_#1{{\lower 0.7ex\hbox{}}_{#1}}
\def\esimo{${}^{\text{\b o}}$}
\newcommand\virt{\rm{virt}}
\def\fa{{\mathcal{F}}}
\def\po{{\partial}}
\def\te{{\theta}}
\def\Om{{\Omega}}
\def\vr{{\varphi}}
\def\ga{{\gamma}}
\def\Ga{{\Gamma}}
\def\la{{\lambda}}
\def\ov{\overline}
\def\al{{\alpha}}
\def\lg{{\langle}}
\def\rg{{\rangle}}
\def\re{{\mathbb{R}}}
\def\bz{{\mathbb{Z}}}
\def\bq{{\mathbb{Q}}}
\def\bd{{\mathbb{D}}}
\def\bc{{\mathbb{C}}}
\def\bn{{\mathbb{N}}}
\def\dim{\operatorname{{dim}}}
\def\Hol{\operatorname{{Hol}}}
\def\res{\operatorname{{res}}}
\def\hot{\operatorname{h.o.t.}}
\def\Id{\operatorname{{Id}}}
\def\Diff{\operatorname{{Diff}}}
\def\sing{\operatorname{{sing}}}
\def\sing{\operatorname{{sing}}}
\renewcommand{\thefootnote}
\title[On  holomorphic flows on  Stein surfaces]{On  holomorphic flows on  Stein surfaces: transversality, dicriticalness and stability}
\author{T. Ito \qquad B. Sc\'ardua \qquad Y. Yamagishi}
\address{T. Ito and Y. Yamagishi: Department of Natural Science, Ryukoku University,
Fushimi-ku, Kyoto 612, JAPAN}
\address{B. Scardua: Inst. Matem\'atica,  Universidade Federal do Rio de Janeiro.   Caixa Postal 68530,
 Rio de Janeiro-RJ,  21.945-970 BRAZIL}
\date{}
\begin{document}

\maketitle

\begin{abstract}

We study the classification of the pairs $(N, \,X)$ where $N$ is a
Stein surface and $X$ is a complete holomorphic vector field with
isolated singularities on $N$. We describe the role of transverse
sections in the classification of $X$ and give necessary and
sufficient conditions on $X$ in order to have $N$ biholomorphic to
$\bc^2$. As a sample of our results, we prove that $N$ is
biholomorphic to $\bc^2$ if $H^2(N,\mathbb Z)=0$, $X$ has a finite
number of singularities and exhibits a non-nilpotent  singularity
with three separatrices or, equivalently, a singularity with first
jet of the form $\lambda_1 \, x\frac{\po}{\po x} + \lambda_2 \,
y\frac{\po}{\po y}$ where $\lambda_1 / \lambda_2 \in \mathbb Q_+$. We also study flows with many periodic orbits,  in a sense we will make clear, proving they admit a meromorphic first integral or they exhibit some special periodic orbit, whose holonomy map is a non-resonant non-linearizable diffeomorphism map.  Finally, we apply our results together with more classical techniques of holomorphic foliations on algebraic surfaces to study the case of flows on affine algebraic surfaces. We suppose the flow is generated by an algebraic vector field. Such flows are then proved, under some undemanding conditions on the singularities, to be given by closed rational linear one-forms or admit rational first integrals. 
\end{abstract}

\tableofcontents

\thispagestyle{empty}

\section{Introduction}
\label{section:introduction}

In a pioneering work, M. Suzuki  introduces the study of
holomorphic flows on  Stein surfaces by the use of techniques of
Holomorphic Foliations, Potential Theory and Theory of Analytic
Spaces (cf. \cite{Suzuki1} and \cite{Suzuki2}). In his original papers the author proves 
that {\sl there is a notion of typical orbit} and {\it if this
orbit is biholomorphic to $\bc^*$ then the corresponding
holomorphic foliation has a meromorphic first 
integral}. 
Another achievement in \cite{Suzuki1} is that {\sl orbits
biholomorphic to $\bc^*$ have analytic closure of dimension one},
i.e., {\sl are contained in invariant analytic curves}. More
recently these facts have been used as main ingredients in the
classification of complete  {\sl polynomial} vector fields in the
affine space $\bc^2$ (\cite{[Brunella]},
\cite{[Cerveau-Scardua]}).  This paper is devoted to the study of
geometric properties in connection with the classification  of
holomorphic flows on Stein manifolds of dimension two. More
precisely, we resume the study of holomorphic flows on connected
Stein surfaces under the hypothesis of existence of a suitable
real transverse section (see \cite{[Ito-Scardua 4]}) and extend
this study to the classification of  the pairs $N, X$ where $N$ is
a Stein surface and $X$ is a complete holomorphic vector field on
$N$ having a suitable isolated singularity. Given a holomorphic
vector field $X$ with isolated singularities on a complex surface
$N$ we denote by $\fa(X)$ the one-dimensional holomorphic
foliation on $N$ whose leaves are the nonsingular orbits of $X$
and with singular set $\sing(\fa(X))=\sing(X)$. A singularity
$p\in \sing(X)$ is {\it dicritical} if for some neighborhood $p\in
V$ there are infinitely many orbits of $X\big|_V$ accumulating
only at $p$. The closure of such a local leaf is an invariant
analytic curve called a {\it separatrix} of $X$ through $p$. An
isolated singularity $P\in \sing(X)$ is in the {\it Poincar\'e
domain} if $DX(P)$ is nonsingular with eigenvalues $\lambda_1,
\lambda_2$ satisfying $\lambda_1 / \lambda_2 \in \mathbb C
\setminus \mathbb R_-$ (\cite{Arnold}).  Let $\ov D^2(1)\subset
\bc^2$ be the closed unit disc given in coordinates $(z_1,z_2)$ by
$\sum\limits_{j=1}^2 |z_j|^2 \leq 1$. Our first  result reads as:

\begin{Theorem} \label{Theorem:main}  Let $X$ be a {\rm complete} holomorphic
vector field with isolated singularities on a connected Stein
surface $N$ with $H^2(N,\mathbb Z)=0$. Suppose that  $\fa(X)$  is
transverse to the boundary of a compact domain $D \subset N$
biholomorphic to the closed unit $2$-disc
 $\ov D^2 (1) \subset \bc^2$. Then there is a holomorphic
diffeomorphism $\Phi$ of an invariant neighborhood $U$ of $D$ in $N$ onto
$\Phi(U) = \bc^2$ such that $\Phi$ conjugates $X$ to a linear
vector field $X_\la = x\,\frac{\po}{\po x} + \la y\,\frac{\po}{\po
y}$\,, $\la \in \bc\backslash\re_-$\,, or to a Poincar\'e-Dulac
normal form vector field $Y_n = y\,\frac{\po}{\po y} +
(nx+y^n)\,\frac{\po}{\po x}$\,, $n \in \bn\backslash\{1\}$ in $U$.
We can take $U=N$ if and only if every leaf of $\fa(X)$ crosses
the boundary $\po D$.
\end{Theorem}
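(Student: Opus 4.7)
The strategy has five steps: (i) use transversality plus Poincar\'e--Hopf to place a single non-degenerate singularity $p_0$ of $X$ inside $D$; (ii) deduce that $p_0$ lies in the Poincar\'e domain; (iii) apply the Poincar\'e--Dulac theorem to obtain a local normal form for $X$ around $p_0$; (iv) propagate this normal form globally along the complete flow of $X$, using $H^2(N,\bz)=0$ to remove the single-valuedness obstruction; and (v) characterize when $U=N$ by direct inspection of the model flows.

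For step~(i), since $\fa(X)$ is transverse to $\po D\cong S^3$, the Poincar\'e--Hopf theorem for holomorphic foliations transverse to the boundary yields
\[
\sum_{p\in\sing(X)\cap\Tni D}\Ind_p(X)=\chi(\ov D^2(1))=1.
\]
The local index of an isolated holomorphic singularity is a positive integer equal to its algebraic multiplicity, so there is a unique singularity $p_0\in\Tni D$ with $\Ind_{p_0}(X)=1$; in particular $DX(p_0)$ has non-zero eigenvalues $\la_1,\la_2$. For step~(ii), I would observe that if $\la_1/\la_2\in\re_-$ then (after pulling back via $D\cong\ov D^2(1)$) the complex tangency locus of $\fa(X)$ with every topological sphere enclosing $p_0$ is non-empty: this is a formal-normal-form calculation at $p_0$ extended by isotopy to $\po D$, essentially the argument of \cite{[Ito-Scardua 4]}. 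Transversality to $\po D$ therefore forces $\la_1/\la_2\in\bc\setminus\re_-$, i.e.\ $p_0$ lies in the Poincar\'e domain.

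For step~(iii), the classical Poincar\'e--Dulac theorem produces a local holomorphic diffeomorphism $\Phi_0$ from a neighbourhood $V$ of $p_0$ in $N$ onto a neighbourhood of $0\in\bc^2$ conjugating $X$ to either $X_\la$ (non-resonant case) or $Y_n$ (resonant case with non-zero resonant coefficient). For step~(iv), let $U$ be the flow-saturation of $V$ in $N$; because $X$ and each model field are complete, the natural candidate
\[
\Phi\bigl(\text{flow}_t^X(q)\bigr)=\text{flow}_t^{X_\la}\bigl(\Phi_0(q)\bigr),\qquad q\in V,\ t\in\bc,
\]
(and similarly with $Y_n$) extends $\Phi_0$ to $U$ provided it is single-valued. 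Single-valuedness reduces to a leafwise holonomy cocycle being trivial; on a Stein surface the exponential exact sequence identifies the relevant obstruction with $H^1(N,\mathcal O^*)\simeq H^2(N,\bz)$, and the hypothesis $H^2(N,\bz)=0$ kills it. The transversality to $\po D$ then produces a product flow-box along $\po D$, ensuring $U$ is open and invariant and that $\Phi$ is an injective holomorphic map; a properness argument based on the $\bc$-action on $\bc^2$ gives $\Phi(U)=\bc^2$.

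For step~(v), in each model every non-singular orbit meets the unit sphere. For $X_\la$ with $\la\notin\re_-$ and any $(x_0,y_0)\neq(0,0)$, the equation
\[
e^{2s}|x_0|^2+e^{2(\Re\la\cdot s-\mI\la\cdot\sigma)}|y_0|^2=1
\]
in $(s,\sigma)\in\re^2$ is solvable using $\Re\la>0$ when applicable and otherwise $\mI\la\neq 0$; a similar elementary computation handles $Y_n$. Consequently, if $U=N$ then every leaf of $\fa(X)$ meets $\po D$; conversely, if every leaf meets $\po D$ then $N$ is the flow-saturation of $D$, hence $U=N$. \textbf{The main obstacle} is step~(iv): producing the single-valued global biholomorphism $\Phi$. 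Completeness and a local normal form do not suffice on their own; one must locate the leafwise holonomy as a cohomological cocycle and deploy $H^2(N,\bz)=0$ to trivialize it. The remaining steps are routine bookkeeping of index theory, Poincar\'e--Dulac normal forms, and the explicit dynamics of the model fields.
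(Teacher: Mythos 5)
Your steps (i)--(iii) and (v) follow essentially the same route as the paper, which invokes Ito's Poincar\'e--Bendixson-type theorem to place a single simple Poincar\'e-domain singularity $P_0$ in $\Tni D$ and then applies Poincar\'e--Dulac; your index-count reconstruction of that input is acceptable. The problem is step (iv), which you yourself flag as the main obstacle: the mechanism you propose for it does not work. The failure of well-definedness of
$\Phi(\mathrm{flow}_t^X(q))=\mathrm{flow}_t^{X_\la}(\Phi_0(q))$ is not a \v{C}ech cocycle for $\O^*$, and there is no identification of it with a class in $H^1(N,\O^*)\simeq H^2(N,\bz)$; the issue is simply that if $q_1\in V$ and $q_2=\mathrm{flow}_t^X(q_1)\in V$ but the flow segment joining them leaves $V$, the local conjugacy $\Phi_0$ gives no a priori reason that $\Phi_0(q_2)=\mathrm{flow}_t^{X_\la}(\Phi_0(q_1))$. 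Invoking $H^2(N,\bz)=0$ here is a red herring: the paper's own proof of this theorem does not use that hypothesis for the extension step at all.

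The correct (and standard) resolution is dynamical, not cohomological. Because $P_0$ is in the Poincar\'e domain, the eigenvalues $1$ and $\la$ (with $\la\notin\re_-$) lie in a common open half-plane, so one can choose $\tau\in\bc^*$ with $\Re(\tau)<0$ and $\Re(\la\tau)<0$; flowing in the real direction $s\mapsto s\tau$ then contracts a suitable neighbourhood $V$ of $P_0$ into itself and attracts it to $P_0$ (and likewise for $Y_n$). Define $\Phi(q)=\mathrm{flow}^{X_\la}_{-s\tau}\bigl(\Phi_0(\mathrm{flow}^X_{s\tau}(q))\bigr)$ for $s\gg0$: independence of $s$ follows because for $s'>s$ the entire segment $\mathrm{flow}^X_{\sigma\tau}(q)$, $\sigma\in[s,s']$, stays inside $V$, where $\Phi_0$ intertwines the flows; the identity principle in $t$ then upgrades this to full conjugacy of the $\bc$-actions on the basin $B_{P_0}$, and surjectivity onto $\bc^2$ holds because the basin of the model is all of $\bc^2$. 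With step (iv) repaired in this way, your argument coincides with the paper's ("the flow of $X$ gives a holomorphic conjugation between $X$ and $X_\la$ in the basin $B_{P_0}$"), and the rest of your write-up goes through.
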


Theorem~\ref{Theorem:main} generalizes a result in
\cite{[Ito-Scardua 4]} and is a main ingredient in our approach.
As we will see in Theorem~\ref{Theorem:equivalences} below, we can
take  $U=N$ if $X\big|_U$ is conjugate to $ X_\la$ with $\la \in
\bq_+$ or, equivalently, if $X$ has some non-nilpotent or simple
(i.e., nondegenerate) dicritical singularity in $N$ (the dicritical last part is in \cite{Scarduacomplexvariables}). Before
stating our second main result we shall introduce some terminology
motivated by \cite{[Ito-Scardua 4]} and classical notions from
Geometric Theory of Foliations (\cite{Godbillon}).

\begin{Definition}
{\rm Given a holomorphic vector field $X$ on a surface $N$ we
shall say that:

\begin{itemize}
\item[{(1)}] $X$ has the {\it stability property} if it is complete and
exhibits a periodic   closed orbit in $N$ having finite holonomy group.

\item[{(2)}] $X$ has the {\it global transversality property} if
one of the conditions below is satisfied:
\begin{itemize}\itemsep=-1pt \item[\rm(2.i)] $X$ is transverse to
$\po D_j$\,, $\forall\,j\in \mathbb N$ where
$\bigcup\limits_{j\in\mathbb N} D_j = N$ is an exhaustion of $N$
by domains biholomorphic to the closed $2$-disc $\ov D^2(1)
\subset \bc^2$. \item[\rm(2.ii)] $X$ is transverse to the boundary
$\po D$ of a compact domain $D \subset N$ biholomorphic to the
closed unit $2$-disc $\ov{D}^2(1) \subset \bc^2$ and such that
every leaf of $\fa(X)$ crosses $\po D$.
\end{itemize}

\item[{(3)}] $X$ has the {\it first jet resonance property} if one the
conditions below is satisfied:
\begin{itemize}
\item[\rm(3.i)] $X$ has a linearizable singularity of type $X_\la
= x\,\frac{\po}{\po x} + \la y\,\frac{\po}{\po y}$\,, $\la =
k/\ell \in \bq_+$\,. \item[\rm(3.ii)] $X$ has a non-nilpotent
singularity with first jet of the form $X_\la$\,, $\la \in
\bq_+$\,.
\end{itemize}

\item[{(4)}] $X$ has the {\it dicriticalness property} if fulfills
one of the following conditions:
\begin{itemize}
\item[\rm(4.1)] $X$ has a dicritical singularity with
non-nilpotent linear part. \item[\rm(4.2)] $X$ exhibits a
non-nilpotent singularity having at least three separatrices.
\item[\rm(4.3)] $X$ has some  singularity  in the Poincar\'e
domain and $\fa(X)$ has a meromorphic first integral in $N$, for
instance if $X$ has some simple  dicritical singularity.
\end{itemize}

\end{itemize}
}
\end{Definition}

 The following result is a mix of  characterization of connected Stein surfaces
biholomorphic to $\bc^2$ and partial classification of holomorphic
flows on Stein surfaces. It may be also seen as a follow-up to
results in \cite{[Ito-Scardua 4],Scarduacomplexvariables, Suzuki1} and
\cite{Suzuki2}:

\begin{Theorem}
\label{Theorem:equivalences}  Let $N$ be a connected Stein surface
with $H^2(N,\mathbb Z)=0$. The following conditions are
equivalent:
\begin{itemize}\itemsep=-1pt
\item[\rm(1)] $N$ is biholomorphic to $\bc^2$.
\item[\rm(2)] $N$ admits a complete holomorphic vector field $X$ with
a finite number of singularities and satisfying one of the following conditions:
\begin{itemize}\itemsep=-1pt
\item[\rm(a)] $X$ has the global transversality property.
\item[\rm(b)] $X$ has the first jet resonance property. \item[\rm(c)]
$X$ has the dicriticalness property. \item[\rm(d)] $X$ has the
stability property.
\end{itemize}
\end{itemize}

\end{Theorem}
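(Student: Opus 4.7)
The plan is to exhibit one vector field on $\bc^2$ witnessing all four conditions of (2) for the direction (1)$\Rightarrow$(2), and in the opposite direction to reduce each of (2a)--(2d) to the hypotheses of Theorem~\ref{Theorem:main} with $U=N$. For (1)$\Rightarrow$(2) I would take $X_0 = x\frac{\po}{\po x} + 2y\frac{\po}{\po y}$ on $\bc^2$: it is complete with a single dicritical singularity at the origin whose first jet is $X_2$, $\la = 2 \in \bq_+$; its meromorphic first integral $y/x^2$ shows that generic leaves are the closed $\bc^*$-orbits $\{y = cx^2\}\setminus\{0\}$, each of which has trivial (hence finite) holonomy; and a direct computation shows that $\fa(X_0)$ is transverse to every sphere $\po\ov D^2(R)$ with every leaf meeting it. All of (a)--(d) are thereby witnessed.

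The implication (2a)$\Rightarrow$(1) is immediate from Theorem~\ref{Theorem:main}: case (2.ii) gives $U=N \cong \bc^2$ directly, and case (2.i) is handled by patching the linearizing charts $\Phi_j$ on the exhaustion $\{D_j\}$ using Stein connectedness and the rigidity of the models $(\bc^2, X_\la)$ and $(\bc^2, Y_n)$ under biholomorphic self-maps preserving the vector field. For (2b)$\Rightarrow$(2a) I use that in both subcases (3.i) and (3.ii) the singularity $p$ has eigenvalue ratio $\la \in \bq_+$, placing it in the Poincar\'e domain, so by Poincar\'e's linearization theorem or the Poincar\'e--Dulac normal form theorem in the resonant case, $X$ is holomorphically conjugate near $p$ to $X_\la$ or to $Y_n$. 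A small polydisc $D$ around $p$ is then transverse to $\fa(X)$; since both eigenvalues of $DX(p)$ have positive real part the real flow of $X$ expels every point of $D$ in finite positive time, so completeness forces every $\fa(X)$-leaf to meet $\po D$, delivering (2.ii).

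For (2c)$\Rightarrow$(2b) I would treat the three subcases: in (4.1), a non-nilpotent singularity linearizable or conjugate to its Jordan form has only finitely many separatrices unless its eigenvalue ratio lies in $\bq_+$, so dicriticalness forces $\la \in \bq_+$; in (4.2), a non-nilpotent non-dicritical singularity has exactly two separatrices (the two eigen-axes), so three separatrices force dicriticalness, reducing to (4.1); in (4.3), the meromorphic first integral together with the Poincar\'e-domain singularity furnish a rational map $F \colon N \to \bp^1$ whose generic fibre provides a compact transverse disc through which every leaf passes, again yielding (2.ii).

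The hardest step is (2d)$\Rightarrow$(1). Given a closed periodic orbit $L$ with finite holonomy group, the holomorphic Reeb stability theorem supplies a saturated neighborhood of $L$ foliated by closed orbits; by Suzuki's typical-orbit dichotomy \cite{Suzuki1}, the generic $\fa(X)$-leaf is then biholomorphic to $\bc^*$ and $\fa(X)$ admits a meromorphic first integral on $N$. The substantive difficulty, which I regard as the main obstacle, is constructing from this data a compact domain $D$ transverse to $\fa(X)$ whose boundary is crossed by \emph{every} leaf, not merely those in the stability neighborhood of $L$. For this I would combine the global first integral with the Poincar\'e-domain analysis of Theorem~\ref{Theorem:main} and the hypothesis $H^2(N,\bz)=0$ to rule out reducible fibres of $F$ containing leaves missing $\po D$; the resulting (2.ii) reduces the problem to (2a), whence $N \cong \bc^2$.
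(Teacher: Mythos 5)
Your direction (1)$\Rightarrow$(2) and the reductions among the subcases of (2) are broadly in the spirit of the paper, but the proposal breaks down at the single step on which the whole theorem actually rests. In (2b)$\Rightarrow$(2a) you assert that, because the eigenvalues of $DX(p)$ have positive real part, ``completeness forces every $\fa(X)$-leaf to meet $\po D$.'' Completeness gives no such thing: the expelling property only concerns leaves that already enter $D$, i.e.\ leaves in the basin of attraction $B_{P_0}$. A priori $N$ could strictly contain $B_{P_0}$, with $\po B_{P_0}$ an invariant union of curves and singularities separating the basin from leaves that never approach $p$; ruling this out is precisely the content of Lemmas~\ref{Lemma:boundary} and \ref{Lemma:biholomorphictoplane} in the paper. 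Those lemmas use three inputs your argument never touches: the meromorphic first integral supplied by dicriticalness via Suzuki's theorem (to show every boundary leaf is a closed curve accumulating exactly one nondicritical singularity), the finiteness of $\sing(X)$ (to conclude there are only finitely many such boundary curves), and $H^2(N,\bz)=0$ together with the simple connectivity of $B_{P_0}$ (to get a reduced equation $\{f=0\}$ for a boundary leaf and derive a contradiction from the periods of $df/f$ over a loop in $B_{P_0}$). Without some version of this argument the implication is unproved, and the same omission propagates to your treatment of (2c) and (2d), both of which you ultimately route through the claim that every leaf crosses $\po D$.

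Two secondary points. In subcase (4.3) you say the generic fibre of the first integral ``provides a compact transverse disc through which every leaf passes''; fibres of a first integral are invariant curves, not transversals, so as written this step is not meaningful --- the transverse disc must come from a small ball around the Poincar\'e-domain singularity, which returns you to the gap above. In (2d) you correctly identify the difficulty but do not resolve it, and you also do not explain where the transverse compact domain $D$ comes from in the first place; the paper instead deduces from Theorem~\ref{Theorem:stability} a meromorphic first integral and then re-enters the dicriticalness argument (via Step~1 of Lemma~\ref{Lemma:boundary}), rather than trying to manufacture a transverse boundary directly. Your patching argument for (2.i) is vaguer than necessary: the paper simply observes that all the domains $D_j$ contain the same singularity, hence determine the same basin, and the basins exhaust $N$.
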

Theorems~\ref{Theorem:main} and \ref{Theorem:equivalences} are
proved in Section~\ref{section:mainresult} and
Section~\ref{Section:equivalences} respectively. Some undemanding
consequences of Theorem~\ref{Theorem:equivalences} are:

\begin{Corollary}
Let $N$ be a connected Stein surface with $H^2(N,\mathbb Z)=0$
equipped with a complete holomorphic vector field $X$ with
finitely many singularities in $N$ and having a non-nilpotent
dicritical singularity, e.g., if $X$ has a singularity where its
first jet is of the form $X_\lambda$ for $\lambda \in \mathbb
Q_+$. Then $N$ is biholomorphic to $\bc^2$ and $X$ is globally
analytically linearizable in $N$.
\end{Corollary}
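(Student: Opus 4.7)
The plan is to deduce the corollary directly from Theorems~\ref{Theorem:main} and~\ref{Theorem:equivalences} together with the bridging remark between them. The existence of a non-nilpotent dicritical singularity is precisely condition (4.1) of the \emph{dicriticalness property}, so $X$ satisfies condition (2c) of Theorem~\ref{Theorem:equivalences}, yielding $N \cong \bc^2$. The example in the corollary (first jet of the form $X_\la$ with $\la \in \bq_+$) is a special instance, since for such a linear part the explicit meromorphic first integral $y^\ell / x^k$ already forces dicriticalness.

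For the global analytic linearization I would begin with a local step. Let $p\in\sing(X)$ be the non-nilpotent dicritical singularity. Classical Poincar\'e-domain theory, enhanced by the local meromorphic first integral that a non-nilpotent dicritical singularity necessarily carries, forces the eigenvalue ratio at $p$ to be some $\la = k/\ell \in \bq_+$ and provides a local analytic conjugacy $X \simeq X_\la$ near $p$. Consequently $\fa(X)$ is transverse to every sufficiently small Euclidean sphere about $p$, so one can pick a compact domain $D\subset N$ biholomorphic to $\ov D^2(1)$, containing $p$, with $\fa(X)$ transverse to $\po D$.

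Theorem~\ref{Theorem:main} then supplies an invariant neighborhood $U\supset D$ and a holomorphic diffeomorphism $\Phi\colon U\to\bc^2$ conjugating $X|_U$ either to some $X_\mu$, $\mu\in\bc\setminus\re_-$, or to a Poincar\'e-Dulac form $Y_n$. Since $Y_n$ has exactly one separatrix at the origin, while the dicritical singularity $p$ carries infinitely many, the Poincar\'e-Dulac alternative is ruled out; and matching local models at $p$ forces $\mu = \la \in \bq_+$. The remark immediately following Theorem~\ref{Theorem:main} (with the dicritical part attributed to \cite{Scarduacomplexvariables}) then guarantees $U = N$ once the model is $X_\la$ with $\la\in\bq_+$, since then every leaf is a level set of the meromorphic first integral $y^\ell/x^k$ and therefore meets $\po D$. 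This produces the claimed global linearization. The main obstacle I expect is precisely this last step, namely verifying $U=N$: it depends on the global existence of the meromorphic first integral specific to the rational resonance $\la\in\bq_+$, which is exactly what separates this case from the other Poincar\'e-domain linear models where leaves could in principle escape $D$ without crossing $\po D$.
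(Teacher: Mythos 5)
Your reduction of the first assertion to condition (2c) of Theorem~\ref{Theorem:equivalences} is fine (this is how the paper presents the corollary), and the local analysis is correct: dicriticalness forces, via Suzuki, a global meromorphic first integral, which rules out the Poincar\'e--Dulac model $Y_n$ and pins down the local model as $X_\la$ with $\la=k/\ell\in\bq_+$, so that $\fa(X)$ is transverse to small spheres about $p$ and Theorem~\ref{Theorem:main} conjugates $X$ to $X_\la$ on the basin $U=B_{P_0}$. The gap is exactly where you suspected it: the claim that $U=N$ because ``every leaf is a level set of the meromorphic first integral $y^\ell/x^k$ and therefore meets $\po D$'' is not an argument. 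The expression $y^\ell/x^k$ is the first integral only in the linearizing coordinates on $U$; outside $U$ the leaves are components of level sets of the global first integral $\psi$, and a level set of a meromorphic function on $N$ may well have connected components lying entirely in $N\setminus D$. Nothing you have said prevents $\po B_{P_0}$ from being a nonempty invariant set of leaves and singularities, i.e.\ prevents leaves that never enter $D$. Note also that your proposal never uses the hypothesis that $\sing(X)$ is finite, which is a sign that the essential step is missing: that hypothesis is precisely what the paper needs at this point.

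What the paper actually does (Lemma~\ref{Lemma:boundary}, Lemma~\ref{Lemma:biholomorphictoplane} and Proposition~\ref{Proposition:equivalences}(i)) is the following. First, using the first integral and the fact that a leaf cannot accumulate two distinct singularities (its closure would be a rational curve in a Stein surface), one shows $\po B_{P_0}$ consists of isolated \emph{nondicritical} singularities and analytic curves, each accumulating exactly one nondicritical singularity; since each nondicritical singularity has finitely many separatrices and $\sing(X)$ is finite, there are only finitely many such curves, so $\po B_{P_0}$ is thin and $N=B_{P_0}\cup\po B_{P_0}$. Then one kills the boundary cohomologically: if $L_0\subset\po B_{P_0}$ had a reduced global equation $\{f=0\}$ (here $H^2(N,\bz)=0$ is used), the closed meromorphic form $df/f$ would have period $2\pi\sqrt{-1}$ on a small loop around $L_0$, yet that loop can be taken inside the simply connected $B_{P_0}\cong\bc^2$ where $df/f$ is holomorphic and closed, forcing the period to vanish; a similar argument with $H_2(B_{P_0},\re)=0$ removes isolated boundary points. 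You would need to supply this (or an equivalent) argument to close your proof.
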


\begin{Corollary}
[\cite{Scarduacomplexvariables}]
Let $X$ be a complete holomorphic vector field on $\bc^2$. If $X$
has some simple dicritical singularity then $X$ is globally
linearizable in $\bc^2$ as $X_\lambda$ for some $\lambda\in\mathbb
Q_+$.
\end{Corollary}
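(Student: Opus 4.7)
The plan is to deduce the corollary as a direct specialization of Theorem~\ref{Theorem:equivalences} together with the normal form output of Theorem~\ref{Theorem:main}. First, $N:=\bc^2$ is a connected Stein surface with $H^2(N,\mathbb Z)=0$, and the hypothesis that $X$ admits a simple (i.e.\ nondegenerate) dicritical singularity is exactly the ``for instance'' clause of case (4.3) of the dicriticalness property; thus condition (2c) of Theorem~\ref{Theorem:equivalences} holds. The implication (2c)$\Rightarrow$(1) then recovers $N\cong\bc^2$ (trivially, in our situation), but more importantly it unlocks the strong $U=N$ form of Theorem~\ref{Theorem:main} emphasized in the paragraph following that theorem — which explicitly states that one can take $U=N$ when $X$ has a simple dicritical singularity. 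So we obtain a holomorphic diffeomorphism $\Phi\colon\bc^2\to\bc^2$ conjugating $X$ either to some linear $X_\la$ with $\la\in\bc\setminus\re_-$, or to a Poincar\'e--Dulac form $Y_n=y\frac{\po}{\po y}+(nx+y^n)\frac{\po}{\po x}$.

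Next I would rule out the Poincar\'e--Dulac alternative by a local separatrix count. The curve $\{y=0\}$ is invariant for $Y_n$, but restricting $Y_n$ to $\{x=0\}$ yields $y\frac{\po}{\po y}+y^n\frac{\po}{\po x}$, which is transverse to $\{x=0\}$ off the origin; a standard argument (or Camacho--Sad/Briot--Bouquet) then shows the origin of $Y_n$ admits exactly one analytic separatrix. In particular that singularity is \emph{not} dicritical. Since $\Phi$ must send the original simple dicritical singularity to the unique singularity of the normal form, conjugation to $Y_n$ is incompatible with our hypothesis, and we are left with $X\sim X_\la$ globally.

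Finally, I would pin down $\la\in\bq_+$. The image of a dicritical singularity under a biholomorphism is still dicritical, so $X_\la$ must be dicritical at the origin. The local separatrices of $X_\la$ are the integral curves of $\la\, x\, dy - y\, dx=0$: when $\la\in\bq_+$, say $\la=k/\ell$, this integrates to the meromorphic first integral $y^\ell/x^k$ whose level sets produce infinitely many separatrices through $0$; when $\la\in(\bc\setminus\re_-)\setminus\bq_+$, only the two axes are separatrices. Hence dicriticalness forces $\la\in\bq_+$, and the global linearization $X\sim X_\la$ of the previous paragraph completes the proof.

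The main obstacle I anticipate is not in any of these three steps in isolation but in the way the hypotheses of Theorem~\ref{Theorem:main} are actually produced from a bare simple dicritical singularity: one must exhibit, intrinsically from that local data together with completeness of $X$, a compact domain $D\cong \ov D^{2}(1)$ with $\fa(X)$ transverse to $\po D$ and every leaf of $\fa(X)$ crossing $\po D$. This is where case (4.3) does real work — the meromorphic first integral it provides, combined with Suzuki's results on typical orbits, is what should yield such a $D$. All the combinatorial/normal-form content downstream is then nearly formal.
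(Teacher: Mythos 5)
Your local analysis is fine and agrees with the paper: once one knows the singularity is linearizable, the Poincar\'e--Dulac alternative $Y_n$ is excluded because its origin has a single separatrix (equivalently, no local meromorphic first integral, since the holonomy of $\{y=0\}$ is a nontrivial map tangent to the identity), and dicriticalness of $X_\la$ forces $\la\in\bq_+$. The problem is the globalization step, and you have in fact named it yourself in your last paragraph without filling it. Producing a compact domain $D\cong\ov D^2(1)$ with $\fa(X)$ transverse to $\po D$ is easy once the local linearization at $P_0$ is established (take a small ball in linearizing coordinates, as in Example~\ref{Example 5.1}); but Theorem~\ref{Theorem:main} then only yields a conjugation on the basin of attraction $U=B_{P_0}$, and gives $U=N$ \emph{if and only if every leaf crosses $\po D$}. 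Your appeal to ``the paragraph following Theorem~\ref{Theorem:main}'', which announces that one can take $U=N$ when $X$ has a simple dicritical singularity, is circular: that announcement \emph{is} the content of the present corollary, proved later in the paper, not an independent input. So the substantive claim --- that $B_{P_0}=\bc^2$ --- is assumed rather than proved.

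The paper closes this gap in Section~\ref{Section:equivalences} by an argument you do not reproduce. First, dicriticalness gives (via Suzuki) that the generic orbit is $\bc^*$ and $\fa(X)$ has a meromorphic first integral; the local model $X_{k/\ell}$ then makes the flow periodic on $B_{P_0}$, hence (Identity Principle) periodic on all of $N$, so every nonsingular orbit is $\cong\bc^*$. Next, Lemma~\ref{Lemma:boundary} shows $\po B_{P_0}$ contains no closed $\bc^*$-leaf (a homology argument integrating an extended holomorphic one-form over lifted loops) and is therefore a union of finitely many analytic curves, each accumulating exactly one nondicritical singularity, plus isolated nondicritical singular points. Finally, Lemma~\ref{Lemma:biholomorphictoplane} shows $\po B_{P_0}=\emptyset$: if a leaf $L_0=\{f=0\}$ lay in the boundary, a small loop $\ga$ around it would satisfy $\int_\ga \frac{df}{f}=2\pi\sqrt{-1}$, yet $\ga$ can be pushed into the simply-connected basin $B_{P_0}$ where $\frac{df}{f}$ is closed and holomorphic, forcing the integral to vanish. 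Only then does $N=B_{P_0}$ and the flow-defined conjugation become global. Without some version of this argument (or an equivalent reason why every leaf must enter $D$), your proof is incomplete at exactly the point where the corollary has content.
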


\begin{Remark}
{\rm The conclusion of Theorem~\ref{Theorem:main} is
clearly false if we do not assume that $X$ has the first jet resonance
property. For instance take the vector field
$$X=\lambda
_1x\,\frac{\po}{\po x} + \la_2y\,\frac{\po}{\po y} + P(x^p y^q)
(qx\,\frac{\po}{\po x} -
  py\,\frac{\po}{\po y}),$$ where  $\lambda_1,\lambda_2 \in \bc^*$,
  \,\,   $P$ is a polynomial in one variable and  $p,q \in \bn$, \,\, $\lg p,q\rg = 1$.
Then $X$ is complete in $\bc^2$ but is not analytically conjugate
on $\bc^2$ to a vector field $X_\lambda$ or $Y_n$.}
\end{Remark}

Section~\ref{section:stability} contains some Global Stability
results for holomorphic flows on connected Stein surfaces. As a
model we have:

\begin{Theorem}
[Stability theorem, \cite{Scarduacomplexvariables}]
 \label{Theorem:stability}  Let $X$ be a {\rm complete}
holomorphic vector field on a connected Stein surface $N$ with
$H^2(N,\bz) = 0$. Suppose that $X$ has some closed orbit $L_0$
biholomorphic to $\bc^* = \bc-\{0\}$ having finite holonomy group.
Then $\fa(X)$ admits a meromorphic first integral and, in
particular, all nonsingular orbits are closed off $\sing(\fa(X))$
and have finite holonomy groups.
\end{Theorem}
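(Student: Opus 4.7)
My plan is to upgrade the finite-holonomy hypothesis on the single orbit $L_0$ to the statement that the \emph{typical} orbit of $\fa(X)$ is biholomorphic to $\bc^*$, and then invoke Suzuki's theorem (recalled in the introduction) which says that such a foliation admits a meromorphic first integral.

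First, by finiteness of the holonomy group of $L_0\cong\bc^*$, I would apply a Reeb-type local stability theorem for singular holomorphic foliations (Mattei--Moussu style). The finite cyclic holonomy acts on a holomorphic transversal disc $\Sigma$ at a base point of $L_0$; by Bochner's averaging trick this action is linearizable, conjugate to a rotation $w\mapsto e^{2\pi i/m}w$ where $m$ is the order of the holonomy. Building the suspension of this linear action along $L_0$ produces a saturated open neighborhood $V\supset L_0$ in which $\fa(X)$ is modeled, in suitable coordinates, by the level curves $\{w^m=\text{const}\}$. Every leaf through $V$ is therefore closed in $V$ and is an unramified $m$-fold cover of $L_0$, hence again biholomorphic to $\bc^*$.

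Since $V$ is open and swept out entirely by $\bc^*$-leaves, the typical orbit of $\fa(X)$ on $N$ is biholomorphic to $\bc^*$ in the sense of Suzuki, so his theorem furnishes a meromorphic first integral $F\colon N\dashrightarrow\bp^1$ of $\fa(X)$. Every nonsingular leaf is then contained in, and is an irreducible component of, a level set $\{F=c\}$ for some $c\in\bp^1$; these are closed analytic curves off the indeterminacy locus, which lies in $\sing(\fa(X))$. The holonomy of each such leaf is generated by monodromies of $F$ around the polar and zero divisor components meeting it, and these are finite cyclic rotations whose orders are bounded by the ramification orders of $F$, so every nonsingular orbit is closed off $\sing(\fa(X))$ with finite holonomy.

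The crucial and delicate step is the first one---passing from finite holonomy of the single leaf $L_0$ to a saturated open neighborhood swept out by leaves of the same biholomorphic type $\bc^*$. The linearization of the finite holonomy by Bochner averaging is classical, but one must verify carefully that the suspension model captures the true foliation in a whole open neighborhood of $L_0$ (not merely its germ along $L_0$), and that all nearby leaves are genuinely closed and biholomorphic to $\bc^*$ rather than degenerating to $\bc$ or becoming dense. Granted that, Suzuki's machinery delivers the meromorphic first integral and the rest of the conclusion is immediate.
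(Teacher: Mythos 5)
There is a genuine gap at the step you yourself flag as crucial. The Reeb-type local stability theorem you invoke is a statement about \emph{compact} leaves; here $L_0\cong\bc^*$ is non-compact, and finite holonomy of a non-compact leaf does not by itself produce a saturated open neighborhood foliated as a suspension in which every leaf is a finite covering of $L_0$. The obstruction is that $L_0$ is a closed affine curve whose two ends run off to infinity in $N$: the transversal discs on which a holonomy representative is defined may shrink along the ends, so a leaf through a small transversal at the base point can escape any prescribed neighborhood $V$ of $L_0$, and there is no covering projection $L_z\to L_0$ to conclude $L_z\cong\bc^*$. Tellingly, your argument for this step uses neither the Stein hypothesis nor $H^2(N,\bz)=0$, whereas the conclusion genuinely depends on them.

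The paper's proof avoids the suspension entirely and only extracts from finite holonomy the weaker (and correct) consequence that some fixed power of a generator $\ga$ of $H_1(L_0)$ lifts to \emph{closed} loops $\ga_z$ in the nearby leaves $L_z$ --- a statement about a compact piece of $L_0$ only. It then shows these lifted loops are homologically nontrivial in $L_z$ by a global cohomological device: choose a holomorphic one-form $\al$ on $L_0$ with $\int_\ga\al=1$ (Richards), extend it to a holomorphic one-form $\tilde\al$ on all of $N$ using that $N$ is Stein with $H^2(N,\bz)=0$, and observe that $\int_{\ga_z}\tilde\al\ne 0$ for $z$ near $p_0$ by continuity of periods; since $\tilde\al|_{L_z}$ is closed, $L_z$ has nontrivial first homology. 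As the orbits of a complete flow on a Stein surface are $\bc$ or $\bc^*$, this forces $L_z\cong\bc^*$ for all $z$ in an open transversal set, whence the generic orbit is $\bc^*$ and Suzuki's theorem applies as you intend. If you want to keep your overall architecture, you should replace the suspension step by this period argument (or supply an independent proof that the lifted loops are essential in their leaves); the remainder of your proposal, including the deduction of closedness and finite holonomy of all orbits from the meromorphic first integral, is in order.
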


Our basic models $X_\lambda$ and $Y_n$ appearing in
Theorem~\ref{Theorem:main} are studied in the sequel:

\begin{Example} \label{Example 5.1} {\rm Let $X_\la = x\,\frac{\po}{\po x} + \la
y\,\frac{\po}{\po y}$\,, $\la \in \bc\backslash\re_-$ and $Y_n :=
y\,\frac{\po}{\po y} + (nx+y^n)\,\frac{\po}{\po x}$ where $n \in
\bn$. Then $X_\la$ and $Y_n$ are complete holomorphic vector
fields on $\bc^2$ and also they generated foliations $\fa(X_\la)$
and $\fa(Y_n)$ with the following properties:
\begin{itemize}\itemsep=-1pt
\item[(i)] $\fa(X_\lambda)$ is transverse to the spheres $S^3(r)$,
$\forall\,r > 0$. \item[(ii)] $\fa(Y_n)$ is transverse to the
sphere $S^3(r)$ if $r > 0$ is small enough and if $n=1$ then
$\fa(Y_1)$ is transverse to all sphere $S^3(r)$, $r > 0$.
\end{itemize}}
\end{Example}
In both cases the foliation has a single singularity on $\bc^2$
and this singularity is either nondicritical (for $Y_n$ or $X_\la$
with $\la \notin \bq_+$) or the singularity (and also the globally
defined foliation) admits a meromorphic first integral of type
$\frac{x^k}{y^\ell}$\,, $k,\ell \in \bn$ (for $X_\la$ in the case
$\la = \frac{k}{\ell}$).


We recall that the basin of attraction of the origin as a
singularity of $X_\la$ is the whole plane $\bc^2$,  this is clear
from flow integration. The same holds for $Y_n$\,, a holonomy
argument for the case $n=1$ is as follows. It is well-known that
the holonomy map of the leaf $L = \{y=0\}\backslash\{0\}$ is a
homography of the form $h(y)=\frac{y}{1+ay}$\,, $a \in
\bc\backslash\{0\}$; this holonomy map is defined on a transverse
section $\Sigma$ of the form $\Sigma : \{x=1\}$. Thus, since the
orbits of $h$ accumulate the origin of $\Sigma$, every leaf of
$\fa(Y_n)$ accumulates the leaf $L$ which, by its turn,
accumulates the origin. Another general argument comes from the
integration of the Pfaffian equation: $\Om = y\,dx - (nx+y^n)dy =
0$. We observe that $\Om=-y^{n+1}d\ln (y e^{-\frac{x}{y^n}})$ and
we have a Liouvillian first integral  $F = ye^{-\frac{x}{y^n}}$
whose fibers clearly accumulate $\{y=0\}$ and therefore the origin
$0\in\bc^2$.


Let $X$ be a vector field on $N$ and let $\xi\subset N\setminus \sing(X)$ be a subset of $N$ consisting of orbits of $X$. We shall say that a point $q\in N$ is a {\it convergence point} for $\xi$ if each neighborhood $V$ of $q$ in $N$ intersects infinitely many orbits in $\xi$. In this case the orbit through $q$ is also contained in the set of convergence points of $\xi$. We shall say that $\xi$ is a {\it divergent} subset of $N$ if its set of convergence points is empty.

\begin{Theorem}
\label{Theorem:flowsstability}
Let $X$ be a {\rm complete} holomorphic vector field with isolated singularities on a connected Stein surface $N$ with $H^2(N,\bz) = 0$. Denote by $\xi(X)\subset N$ the subset of periodic orbits, i.e., orbits diffeomorphic to $\mathbb C^*$. Then there are the following possibilities:
\begin{enumerate}
\item The set of periodic  orbits has no convergence set on $N$, or  contains only simply-connected orbits in the convergence set.
 \item There is a meromorphic first integral for the flow on $N$.
  \item We have a (nRnL) periodic orbit.
 \end{enumerate}
\end{Theorem}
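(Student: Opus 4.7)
The strategy is to assume condition~(1) fails and derive (2) or (3) through a one-variable holonomy analysis. If (1) fails, the convergence set of $\xi(X)$ contains a non-simply-connected orbit of $X$; Steinness of $N$ precludes compact (toral) orbits of the flow, so this orbit is biholomorphic to $\bc^*$. Call it $L_0\in\xi(X)$, pick a convergence point $p\in L_0$, and choose a small disc $\Sigma\subset N$ holomorphically transverse to $\fa(X)$ at $p$. Let $h\colon(\Sigma,p)\to(\Sigma,p)$ denote the first-return germ of the flow, i.e.\ the holonomy of $\fa(X)$ along a generator of $\pi_1(L_0)\cong\bz$.

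The bridge step is to translate accumulation of $\xi(X)$ at $L_0$ into accumulation of periodic points of $h$ at $p$. Since $p$ is a convergence point, infinitely many distinct $\bc^*$-orbits $L_n\in\xi(X)$ approach $L_0$ in the ambient topology of $N$; by completeness of $X$ and the usual foliated tubular neighborhood of the closed leaf $L_0$, each $L_n$, for $n$ large, is swept out by starting from some $q_n\in\Sigma$ near $p$ and iterating the first-return map. The hypothesis that $L_n$ is biholomorphic to $\bc^*$, rather than to $\bc$, is precisely the statement that the $\langle h\rangle$-orbit of $q_n$ is finite, so $q_n$ is a periodic point of $h$. Shrinking $\Sigma$ if needed, I obtain a sequence $q_n\to p$ of periodic points of $h$, with $q_n\neq p$.

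At this point I would apply the classical one-variable dichotomy of P\'erez-Marco (with Bochner/Mattei--Moussu on the resonant side) for germs $f\in\Diff(\bc,0)$ whose periodic points accumulate at $0$: either $f$ has finite order, $f^N=\Id$ for some $N\geq 1$, or the multiplier $f'(0)=e^{2\pi i\alpha}$ is of irrational rotation number and $f$ is not holomorphically linearizable. If $h$ has finite order, the holonomy group of $L_0$ is the finite cyclic group $\bz/N\bz$, so $L_0\cong\bc^*$ is a closed orbit of $X$ with finite holonomy; Theorem~\ref{Theorem:stability} then furnishes a meromorphic first integral of $\fa(X)$ on $N$, which is conclusion~(2). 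Otherwise $h$ is non-resonant and non-linearizable, making $L_0$ a (nRnL) periodic orbit and giving conclusion~(3).

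The step I expect to be the main obstacle is the dictionary in the middle paragraph: matching accumulation of $\bc^*$-orbits at $L_0$ in $N$ to accumulation of periodic points of the germ $h$ in $\Sigma$. Concretely one must verify that for $n$ large the relevant portion of $L_n$ lies in the tubular neighborhood of $L_0$ where $h$ is defined, that the $\bc^*$-structure of $L_n$ corresponds precisely to the $\langle h\rangle$-orbit of $q_n$ being finite, and that no pathology arises from the fact that the complex flow time along $L_n$ need not equal the intrinsic period of the $\bc^*$-leaf. These points combine completeness of $X$, Steinness of $N$, and standard suspension arguments for closed leaves of holomorphic foliations; once they are in place, the argument is driven entirely by the one-variable germ dichotomy and the already-established Stability Theorem.
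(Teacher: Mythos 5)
Your overall strategy --- take a $\bc^*$-orbit $L_0$ in the convergence set, pass to its holonomy germ $h$ on a transversal, run a one-variable dichotomy, and invoke Theorem~\ref{Theorem:stability} in the finite-order case --- is essentially the paper's argument, but only for the sub-case in which $L_0$ is \emph{closed in $N$}. There is a genuine gap in the other sub-case. By Suzuki's theorem a $\bc^*$-orbit is closed only off $\sing(X)$, so $L_0$ may accumulate a singular point $q$, with $\ov{L_0}=L_0\cup\{q\}$ an analytic curve whose normalization is $\bc$. In that situation your finite-order branch breaks down: Theorem~\ref{Theorem:stability} is stated for closed orbits, and its proof extends a holomorphic one-form $\al$ on $L_0$ with $\int_\ga\al=1$ to all of $N$ using that $L_0$ is a closed submanifold of the Stein surface $N$. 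When $\ov{L_0}$ passes through $q$, any one-form on $\ov{L_0}\cong\bc$ with nonzero period around the puncture must have a pole at $q$ (and $N\setminus\{q\}$ is not Stein), so the extension argument fails. The paper handles this sub-case by a different mechanism: since $q$ is then a convergence point of $\xi(X)$, the germ of $\fa(X)$ at $q$ has infinitely many closed leaves, and the Camacho--Sc\'ardua classification yields either a local holomorphic first integral (whence the flow is periodic near $q$, hence periodic on all of $N$ by the Identity Principle, giving a $\bc^*$-action and a meromorphic first integral via Suzuki) or a (nRnL) separatrix holonomy, which is exactly the holonomy of $L_0$. You would need to supply this local analysis, or an equivalent substitute, to close the argument. (The dicritical possibility for $q$, which gives a meromorphic first integral outright, should also be mentioned.)

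A second, repairable, inaccuracy is the ``dictionary'' you flag yourself. It is not true that $L_n\cong\bc^*$ forces $q_n=L_n\cap\Sigma$ to be a \emph{periodic} point of $h$: a leaf can be $\bc^*$ for topological reasons occurring outside the tubular neighborhood of $L_0$, while its $\langle h\rangle$-pseudo-orbit simply escapes the transversal. What one actually gets, since $L_n$ is closed off $\sing(X)$ and $p\notin\ov{L_n}$, is that the pseudo-orbit of $q_n$ is a discrete closed subset of $\Sigma$ not accumulating at $p$. This is precisely the notion of ``closed pseudo-orbit'' used in Lemma~\ref{Lemma:finiteorbits} and Proposition~\ref{Proposition:finiteorbitsnonresonant}, and it still rules out the hyperbolic, parabolic, and linearizable non-resonant cases (all of whose orbits either accumulate at the fixed point or are dense in invariant circles), so the dichotomy ``finite order or (nRnL)'' survives; but you should phrase the reduction in terms of non-accumulating pseudo-orbits rather than periodic points, and cite the group-theoretic dichotomy in that form.
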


We shall explain in \S 3 what we mean by a (nRnL) orbit (see Definition~\ref{Definition:nRnLorbit}). As for the moment it suffices to that this is an orbit, diffeomorphic to $\mathbb C^*$, whose holonomy map is  a non-resonant and not analytically linearizable ( i.e., a so called ({\it nRnL})) map.

In the last section we study flows on affine (algebraic) surfaces. 
Our main result for this situation is:

\begin{Theorem}
\label{Theorem:affinesurface}
Let $X$ be an algebraic vector field on a regular affine surface $N\subset \mathbb C^m$ and assume that $X$ is complete in $N$ and the projectivization $\ov{N}\subset \mathbb CP^m$ is such that:
\begin{enumerate}
\item The singularities of $\ov{\fa}$ in $D=\ov{N}\setminus N$ are generalized curves.
\item The flow of $X$ has infinitely many periodic orbits  in $N$.
\end{enumerate}
Then,  there is a rational first integral for $\ov{\fa}$, there is a meromorphic first integral for $\fa$ in $N$, or $\ov{\fa}$ is given by a simple poles closed rational one-form $\Omega$ in $\ov N$.
\end{Theorem}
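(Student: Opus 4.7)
The plan is to combine three tools: Suzuki's closure theorem for $\bc^*$-orbits on Stein surfaces (\cite{Suzuki1}), Chow's theorem giving algebraicity on the projective closure $\ov N$, and a Darboux--Jouanolou dichotomy for foliations on projective surfaces with infinitely many invariant algebraic curves. The generalized-curve hypothesis on the singularities of $\ov\fa$ along $D=\ov N\setminus N$ is precisely what controls the poles of the closed rational $1$-form in the last alternative.

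First I would show that the periodic orbits produce infinitely many distinct invariant algebraic curves of $\ov\fa$ on $\ov N$. Since the affine surface $N$ is Stein, Suzuki's theorem applies to any periodic orbit $L\cong\bc^*$: the closure $\ov L\subset N$ is a one-dimensional analytic curve, invariant by $\fa$. Its further closure in the projective surface $\ov N$ is a compact analytic subvariety, hence algebraic by Chow. Distinct periodic orbits $L\ne L'$ give distinct irreducible invariant algebraic curves, because the regular part of any such curve is itself a single leaf of $\ov\fa$. Thus the existence of infinitely many periodic orbits in $N$ produces infinitely many distinct irreducible invariant algebraic curves for $\ov\fa$ on $\ov N$.

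Next I would apply the Darboux--Jouanolou theorem on $\ov N$: a holomorphic foliation with infinitely many irreducible invariant algebraic curves either admits a rational first integral, or is defined by a closed logarithmic rational $1$-form $\Omega$. The first alternative yields a rational first integral for $\ov\fa$, which restricts to a meromorphic first integral for $\fa$ on $N$, covering conclusions (1) and (2). In the second alternative, the generalized-curve hypothesis enters to upgrade $\Omega$ to have simple poles: after reduction of singularities along $D$, a generalized curve singularity has no saddle-node components and its resonant linear models contribute only logarithmic derivatives, so by classical results of Cerveau-Mattei and Camacho-Lins Neto-Sad the form $\Omega$ has at worst simple poles along the invariant components of $D$, giving conclusion (3).

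The main obstacle, in my view, is the passage between the Jouanolou dichotomy and the refined simple-pole form of $\Omega$: one must check that the closed rational $1$-form built from the residues along the infinitely many invariant algebraic curves has polar divisor supported in an algebraic curve containing $D$, and then use the generalized-curve hypothesis at each singular point of $\ov\fa$ on $D$ to exclude both higher-order pole contributions and spurious residues that a saddle-node would introduce. A subsidiary technical point is verifying that $\ov\fa$ extends to a well-defined singular holomorphic foliation on $\ov N$ whose singular locus on $D$ is precisely of the prescribed generalized-curve type, and ruling out the degenerate case in which every rational first integral of $\ov\fa$ becomes constant on $N$, which is forbidden by the presence of infinitely many distinct periodic orbits in the interior.
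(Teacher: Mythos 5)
Your argument breaks down at its very first step, and the failure is exactly what forces the paper into a completely different strategy. Suzuki's theorem gives you that a periodic orbit $L\cong\bc^*$ is closed in $N\setminus\sing(\fa)$ and hence (by Remmert--Stein across the zero-dimensional singular set) has analytic closure \emph{in $N$}. It does \emph{not} give you that the closure of $\ov L$ in the projective surface $\ov N$ is analytic: here you would need to extend a one-dimensional analytic set across the divisor $D=\ov N\setminus N$, which is itself one-dimensional, so Remmert--Stein does not apply and Chow's theorem never enters. A closed invariant analytic curve in the affine part can be transcendental and accumulate on $D$ in a non-analytic way, and this is the generic situation the theorem must handle. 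A symptom that something is wrong with your reduction: if the periodic orbits really did produce infinitely many irreducible invariant algebraic curves, Darboux--Jouanolou would immediately yield a rational first integral for $\ov\fa$, and the other two alternatives in the statement (a merely meromorphic first integral on $N$, or a logarithmic closed rational one-form) would be vacuous. They are not; they are the substance of the result.

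The paper's proof instead treats the periodic orbits as (in general) transcendental and studies how they accumulate on $D$. Dicritical singularities on $D$ are handled first (there one does get infinitely many rational invariant curves, or an open set of periodic orbits, hence Darboux or Suzuki applies). In the non-dicritical case, Theorem~\ref{Theorem:flowsstability} splits the situation into a meromorphic-first-integral case and a case where the set $\xi(X)$ of periodic orbits must accumulate on $D$ by compactness of $\ov N$. One then shows each \emph{virtual holonomy} group of a component of $D$ has infinitely many closed pseudo-orbits, applies Proposition~\ref{Proposition:finiteorbitsnonresonant} to conclude each such group is finite cyclic or abelian with (nRnL) generators, propagates this alternative across the corners of $D$ via the Dulac correspondence (this is where the generalized-curve hypothesis is genuinely used, to exclude saddle-nodes after reduction), and finally globalizes: a holomorphic first integral near $D$ extends rationally since $N$ is Stein, while in the (nRnL) case a transversely formal closed simple-pole one-form along $D$ becomes a closed rational one-form on $\ov N$ by Hironaka--Matsumura, using that $D$ is very ample. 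None of this machinery appears in your outline, and the gap in your first step cannot be patched without it.
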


The exact meaning of the hypotheses is given in \S 6. The classification of such flows is then finished by means of Remark~\ref{Remark:complements}, where we deal with the case of only finitely many periodic orbits. Indeed, the statement of Theorem~\ref{Theorem:affinesurface} can be completed to the case of only finitely many periodic orbits. Also it can be completed to assure the existence of a rational first integral or a closed rational one-form, always that we have a meromorphic first integral on $N$ (see Remark~\ref{Remark:complements}).

\section{Preliminaries}

\subsection{Holomorphic flows on Stein spaces}

We consider an analytic action $\vr:\mathbb C\times V\to V$ on a
  normal Stein variety (the general theory of such spaces can be found
  in \cite{[Gunning-Rossi]} and
  \cite{Gunning III}). This means that $\vr(0,z)=z$ and
  $\vr(s_1,\vr(s_2,z))=\vr(s_1+s_2,z)$ for any $z\in V$ and
  $s_1, s_2\in\mathbb C$. The orbit of $z\in V$ is the subset
  $ O_z:=\{\vr(s,z);s\in\mathbb C\}\subset V$. The isotropy subgroup
  of $z\in V$ is $ G_z:=\{s\in \mathbb C; \vr(s,z)=z\}$. The map
  $s\mapsto \vr_{s} (z)=\vr(s,z)$ induces an injective immersion of
  $\mathbb C/G_z$ in $V$ whose image is $O_z$. Thus the orbits are
  either isomorphic to $\mathbb C$, $ \mathbb C^*$, a torus, or a
  point. Since $V$ is Stein it cannot contain compact analytic subsets,
  therefore no orbit can be a torus . A point $z\in V$ is called
  \emph{ fixed} if $\vr (s,z)=z$ for any $s\in \mathbb C$. Clearly its
  orbit reduces to $\{z\}$. Denote by ${\rm Fix}(\vr )$ the set of fixed points
  of $\vr$. We will assume that the set ${\rm Fix}(\vr)$ is discrete. Then,
  the orbits of $\vr$ are the leaves of a one dimensional foliation
  $\fa_{\vr}$ with a discrete set of singularities, $\sing\fa_{\vr}={\rm Fix}(\vr)$.

  Any action $\psi=(\psi_t)_{t\in \mathbb C^*}$ of the group $\mathbb C^*$
  on a Stein space $V$ induces an action $\vr=(\vr_{s})_{s\in \mathbb C}$
  of $\mathbb C$ on $V$ given by $\vr_{s}=\psi_{\exp(s)}$.\\
The study of the holomorphic flows  on  Stein spaces of dimension two
begins with M. Suzuki in  \cite{Suzuki1} and \cite{Suzuki2}. He
proved that if the generic orbit of $\vr$ is isomorphic
to $\mathbb C^*$, then the foliation $\fa_{\vr}$ admits a meromorphic first
integral.
Using the theory, developed by Nishino and Saito, of entire and meromorphic
functions on $\bc^2$ according to their prime surfaces, Suzuki proved analytic
linearization theorems for holomorphic actions of $\bc^*$ on
$\bc^2$ . He also gave a good description of the possible normal
forms of a holomorphic flow with proper orbits on $\bc^2$.
\subsection{Suzuki's theory}
\label{section:suzukitheory}

A fundamental contribution to the study of holomorphic flows and
foliations on  Stein surfaces, was made by M. Suzuki who
introduced on this subject the use of techniques of Potential
Theory and the Theory of Analytic Spaces (cf. \cite{Suzuki1} and
\cite{Suzuki2}).

Let us recall some of Suzuki's results.
\vglue.1in
  \noindent{\bf Theorem of Suzuki, \cite{Suzuki1}}:
{\it 
  Given a $\mathbb C$- action $\vr$ on
  a normal Stein analytic space $V$ of dimension $n \geq 2$:

  \begin{itemize}

  \item[{\rm(i)}] There is a subset $e\subset V$ of logarithmic capacity
  zero such that $\vr_{t}(e)=e$, for any $t\in \mathbb C$, and all orbits
  of $\vr$ in $V\setminus e$ are of the same topological type.

  \item[{\rm(ii)}] Any leaf of $\fa_{\vr}$ containing an orbit of $\vr$
  isomorphic to $\mathbb C^*$ is closed in $V\backslash sing {\fa_\vr}$.

\item[{\rm(iii)}] If $n=2$ and the leaves of $\fa_\vr$ are
properly embedded in $V\setminus \sing(\fa_\vr)$ then there is a
  meromorphic first integral of $\fa_{\vr}$ on $V$, not constant, and one
  can find a Riemann surface $S$ and a surjective holomorphic map
  $p:V\backslash sing { \fa_{\vr}}\to S$, such that
  1. The irreducible components of the fibers $\{p^{-1}(w); w\in S\}$ of $p$
  are the leaves of $\fa_{\vr}$.\\
  2. The subset $e$ union of all the nonirreducible levels $p^{-1}(w)$; $w\in S$,
   has zero logarithmic capacity.

\item[{\rm(iv)}] If $n=2$ and the generic leaf is isomorphic to $\mathbb C^*$,
then any leaf of $\fa_{\vr}$ is closed in $V\backslash sing { \fa_{\vr}}$ and
therefore there is a meromorphic first integral as in {\rm(iii)}.

  \end{itemize}
}

\vglue.1in

\subsection{Non-resonant map germs and singularities}

 A germ of a complex diffeomorphism $f$ at the origin $0 \in \mathbb C$ writes
$f(z)= e^{2 \pi \sqrt{-1} \lambda} z + a_{k+1} z^{k+1} + ...$.  If $\lambda \notin \mathbb R$ then $|f^\prime(0)| \ne 1$, the germ is {\it hyperbolic},  it is {\it analytically linearizable} (\cite{Dulac}) and has no closed orbit outside the origin. If $f^\prime(0)$ is a root of the unity (i.e., if $\lambda \in \mathbb Q$) then the dynamics of $f$ is well-known (\cite{camacho}). In particular, none of the orbits outside the origin  is closed. If $f^\prime(0)\in S^1$ is not a root of the unity then we have $\lambda \in \mathbb R \setminus \mathbb Q$. In this case we shall say that the diffeomorphism is {\it non-resonant}. Such a map is formally but not always analytically linearizable (see  Cremer \cite{Cr1}). Such germs have been studied by several authors as Siegel, Brjuno, Yoccoz and P\'erez-Marco.
It is well-known that the map germ $f$ is analytically linearizable if, and only if, it is topologically conjugate to its linear part. This is also equivalent to the fact that the map $f$ is stable (\cite{raissy, bracci}). Assume now that the map is not analytically linearizable. Then, a remarkable result of P\'erez-Marco (\cite{perezmarco}) shows that under certain diophantine conditions on the linear part  of the diffeomorphism, we have either the diffeomorphism is analytically linearizable or admits the  existence of {\it small cycles}, that is, periodic orbits contained in any neighborhood of the origin. On the other hand, since these maps can be constructed non-linearizable, we conclude that there exist non-resonant diffeomorphism germs $f$ exhibiting small cycles but, having in any neighborhood of the origin, some
non-periodic orbit.

A very nice description of the dynamics, including the existence of the so called {\it hedgehogs}  of such maps is given by P\'erez-Marco in \cite{P6, P7}.

On the other hand, Yoccoz and P\'erez-Marco proved that
given any non-resonant map germ $f$, there is a germ of a holomorphic foliation $\fa(f)$ at the origin $0\in \mathbb C^2$, such that:
\begin{itemize}
\item $\fa(f)$ is a germ of isolated singularity in the Siegel domain.
\item The holonomy of one of the two separatrices of $\fa(f)$ is analytically conjugate to $f$.
    \end{itemize}

In particular, in suitable local coordinates we can write $\fa(f)$ as given by
$x(1+ A(x,y)) dy - \lambda y(1 + B(x,y))dx=0$, for some holomorphic $A(x,y), \, B(x,y)$ with $A(0,0)=B(0,0)=0$. In the normal form above, the separatrices are the coordinate axes. From the above properties for $f$ the foliation $\fa(f)$ exhibits the following characteristics:

\begin{itemize}
\item $\fa(f)$ is in the Siegel domain and is a non-resonant singularity.
\item $\fa(f)$ has on each neighborhood of the origin, some closed leaf and also some leaf which is not closed and accumulates on one of the separatrices.
\item $\fa(f)$ is not analytically (nor topologically) linearizable.

\end{itemize}
In particular, $\fa(f)$ is not analytically linearizable, but admits infinitely many closed orbits. Also $\fa(f)$ always admits some leaf which accumulates at the separatrices, but not only at the separatrices.

On the other hand, since any non-resonant diffeomorphism is  formally linearizable, a foliation as above is formally similar to the linear model $xdy - \lambda ydx=0$.

\begin{Definition}
{\rm A non-resonant and not analytically linearizable map germ $f\in \Diff(\mathbb C,0)$ will be called a {\it {\rm(}nRnL{\rm)} map}. Similarly, a foliation in the Siegel domain whose holonomy is represented by a (nRnL) map is called a  {\it {\rm(}nRnL{\rm)} foliation singularity}.
}
\end{Definition}

In this paper we   classify  the groups of germs at a fixed point of complex diffeomorphisms in the complex line that exhibit infinitely many closed orbits. It turns out that they are either finite groups or generated by a (nRnL) map and  a rational rotation (cf.Proposition~\ref{Proposition:finiteorbitsnonresonant}).

\section{Groups with many closed orbits}{}
Let  $\Diff({\mathbb C},0)$  denote the group of germs at the origin $0\in {\mathbb C}$of holomorphic diffeomorphisms. It is a well-known result that a a finite group of germs of complex diffeomorphisms is analytically conjugate to a cyclic group generated by a rational rotation. Thus such a finite group is, up to a holomorphic change of coordinates, of the form  $\{\exp(2 k\pi \sqrt{-1}/\nu), \, k=0,1,...,\nu-1\}$ for some $\nu \in \mathbb N$.
Now we shall extend this fact in what follows.
\begin{Definition}[resonant group]
\label{Definition:resonantgroup}
{\rm A germ of a complex diffeomorphism $g \in \Diff(\mathbb C,0)$ is called
{\it resonant} if its linear part is a root of unity, i.e., $g^\prime(0)= \exp(2 \pi \sqrt{-1} k /\ell)$ for some $k , \ell \in \mathbb N$.
A group $G\subset \Diff(\mathbb C,0)$ of germs of holomorphic diffeomorphisms
will be called {\it resonant} if each map $g \in G$ is a resonant germ. This is equivalent to the fact that $G$ has a set of generators consisting only of resonant maps.
}
\end{Definition}
The
 following  is, for the case of resonant groups,  a  generalization of a result in \cite{mattei-moussu}.

\begin{Lemma}
\label{Lemma:finiteorbits} Let $G\subset \Diff({\mathbb C},0)$ be a finitely generated resonant subgroup such that the set $\mathcal C(G)$ of points having closed pseudo-orbit is infinite in a neighborhood $U$ of the origin $0\in {\mathbb C}$. Then $G$ is finite cyclic and analytically conjugate to its linear part.
\end{Lemma}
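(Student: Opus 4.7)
My plan is to reduce the lemma to three successive claims: every element of $G$ has finite order; $G$ itself is finite cyclic; and a finite cyclic subgroup of $\Diff(\bc,0)$ is analytically linearizable. The first is the essential analytic input, obtained by an application of the identity principle; the remaining two are a standard algebraic argument and a Bochner-style averaging.

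First I would fix $g\in G$ and a neighborhood $U$ of $0$ where $g$ is defined and on which $\mathcal C(G)\cap U$ is infinite. For any $z\in \mathcal C(G)$ the full $G$-pseudo-orbit is finite, hence so is its $\langle g\rangle$-orbit, so $z$ is a periodic point of $g$. Partitioning $\mathcal C(G)\cap U$ by the period function $z\mapsto \min\{n\geq 1: g^n(z)=z\}$ and applying the pigeonhole principle to the infinite set, some period $n_0\in \bn$ is attained on an infinite subset $E\subset U$. These points accumulate somewhere in $\ov{U}$, and by shrinking $U$ around $0$ (possible because by hypothesis $\mathcal C(G)$ is infinite in arbitrarily small neighborhoods of $0$) I can arrange that the accumulation point lies inside the domain on which the fixed iterate $g^{n_0}$ is holomorphic. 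The holomorphic function $g^{n_0}(z)-z$ then vanishes on $E$ and so, by the identity principle, identically in a neighborhood of $0$, giving $g^{n_0}=\Id$ as a germ.

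With every element of $G$ periodic, I would exploit the derivative character $D\colon G\to\bc^*$, $D(g)=g'(0)$. By the previous step each $D(g)$ is a root of unity, so $D(G)$ is a finitely generated torsion subgroup of $\bc^*$, hence finite cyclic. Its kernel is trivial: a nontrivial tangent-to-identity germ $h(z)=z+a z^{k+1}+\hot$ with $a\neq 0$ satisfies $h^m(z)=z+m a\,z^{k+1}+\hot$, never the identity. Hence $G\cong D(G)=\langle\la\rangle$ with $\la$ a primitive $\nu$-th root of unity, $\nu=|G|$. Letting $g_0$ be a generator with $g_0^\nu=\Id$ and $g_0'(0)=\la$, the Bochner average
\[
\vr(z)\;=\;\frac{1}{\nu}\sum_{j=0}^{\nu-1}\frac{g_0^{j}(z)}{\la^{j}}
\]
satisfies $\vr'(0)=1$ and, using $g_0^\nu=\Id$ together with $\la^\nu=1$, the functional equation $\vr\circ g_0=\la\,\vr$. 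Thus $\vr$ is a local biholomorphism that simultaneously linearizes all of $G$.

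The main obstacle I anticipate is the first step: one must transport the hypothesis about closed $G$-pseudo-orbits into pointwise information about a single element, and then ensure that the infinite set of common-period points for $g$ has an accumulation point lying in the domain of the fixed iterate $g^{n_0}$, so that the identity principle delivers an equality of germs at $0$. Note that the resonance hypothesis plays no explicit role in this argument, but it is consistent with and illuminates it: a non-resonant hyperbolic germ would have no finite orbit outside $0$, while the irrational (Siegel/Cremer) case would require finer Yoccoz–P\'erez-Marco arguments that the resonance hypothesis neatly bypasses. Once Step~1 is in hand, Steps~2 and~3 are routine.
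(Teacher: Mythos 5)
Your Steps~2 and~3 are correct and in fact cleaner than the paper's route (which first establishes solvability via Nakai's density theorem and then abelianity via Cerveau--Moussu): once every element of $G$ is known to have finite order, the character $g\mapsto g'(0)$ has trivial kernel, so $G$ embeds into $\bc^*$ as a finitely generated torsion group, hence is finite cyclic, and the Bochner average linearizes it. The problem is Step~1, where all the analytic content lies, and it contains a genuine gap. You partition the infinite set $\mathcal C(G)\cap U$ by the period function and "apply the pigeonhole principle" to extract an infinite fiber; but the period function takes values in the infinite set $\bn$, and an infinite set partitioned into infinitely many classes need not have an infinite class. Indeed, whenever $g^{n}\ne\Id$ for every $n$, the fixed-point set of each iterate $g^{n}$ is discrete, so each period is attained only finitely often on a fixed compact neighborhood; the periods are then unbounded, no $n_0$ is attained on an infinite subset, and the identity-principle step never starts.

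This is not a repairable technicality within your framework: your own remark that "the resonance hypothesis plays no explicit role" is the tell-tale sign, because without resonance the statement is false. A non-resonant, non-linearizable germ with small cycles in the sense of P\'erez-Marco --- the (nRnL) maps of the paper, which reappear precisely as case (2) of Proposition~\ref{Proposition:finiteorbitsnonresonant} --- generates a cyclic group having infinitely many finite orbits accumulating at $0$, of unbounded periods, yet this group is infinite and not linearizable. The missing ingredient is exactly the dynamical input the paper takes from \cite{camacho}: by the Leau--Fatou flower, a nontrivial tangent-to-identity germ $h(z)=z+a z^{k+1}+\hot$, $a\ne 0$, has no finite orbit near $0$ other than $\{0\}$ (and neither does any of its iterates, since $h^{m}(z)=z+maz^{k+1}+\hot$). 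Combined with resonance this closes Step~1: if $g\in G$ has $g'(0)$ a primitive $q$-th root of unity and $g^{q}\ne\Id$, then $g^{q}$ is a nontrivial tangent-to-identity map, so $g$ --- and a fortiori $G$ --- has no nontrivial finite pseudo-orbit near $0$, contradicting the hypothesis; hence $g^{q}=\Id$, every element of $G$ has finite order, and your Steps~2 and~3 finish the proof. With that insertion your argument becomes a valid, and somewhat more economical, alternative to the paper's.
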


\noindent{\bf Proof}. First we observe that by Nakai  density theorem (\cite{nakai}) the group $G$ must be solvable, otherwise all orbits are dense except for some finite  set of real  analytic invariant curves. Nevertheless, also according to \cite{nakai}, the orbits in these curves are also dense in these curves and therefore not finite.
On the other hand, if $G$ is not an abelian group, then a non-trivial element $g\in G$ in the commutator of $G$, is of the form $g(z)= z + a_{k+1} z^{k+1} + hot.,$ $a _{k+1} \ne 0$. According to \cite{camacho} the pseudo-orbits outside the origin of $g\in G$ are not finite. Thus $G$ must be abelian  and cannot contain elements of the form $g(z)= z + a_{k+1} z^{k+1} + hot.,$ $g\ne \Id$. We claim that any element $g\in G$ has finite order. Indeed, assume that there exists $g\in G$ with $g^n \ne\Id$ for any $n\in \mathbb Z - \{0\}$. Because of what we observed above, we may assume that  $g(z)=\lambda \cdot z + hot. \, , \, \lambda^n \ne 1$ for any $n\in \mathbb Z-\{0\}$. This is not possible since the elements in $G$ are resonant.


\indent Since $G$ is abelian and finitely generated, the claim implies that $G$  itself is finite, and therefore $G$ must be a group of rational rotations up to analytic conjugation.
\qed

\begin{Remark}[solvable groups]

{\rm

For each $k\in \mathbb N$ we define the group

\[
\mathbb H_k = \left \{\vr \in \text{Diff}(\mathbb C,0); \, \,  \vr(z)^k =
\frac{\mu_\vr z^k}{1+a_\vr z^k}\,, \,\,\mu_\vr \in \mathbb C^*, a_\vr
\in \mathbb C \right \}.
\]

\noindent Then $\mathbb H_k$ is a  solvable group and, up to formal
conjugacy, any solvable nonabelian subgroup $G$ of
$\Diff({\mathbb C},0)$ is of this type (\cite{cerveau-moussu}, a result by
Cerveau-Moussu). Moreover, if the group of commutators $[G,G]$ is not cyclic,  this conjugacy is analytic.
}
\end{Remark}

We can complete the above remark as follows (\cite{scarduaJDCS}):
\begin{Lemma}
\label{Lemma:linearization} Let $G < \Diff(\mathbb C,0)$ be a solvable non-abelian
subgroup of germs of holomorphic diffeomorphisms fixing the origin
$0 \in \mathbb C$.
\begin{itemize}
\item[\rm(i)] If  the group of commutators
$[G,G]$ is not cyclic then $G$ is analytically conjugate to a
subgroup of $\mathbb H_k = \big\{z \mapsto
\frac{az}{\sqrt[k]{1+bz^k}}\big\}$ for some $k \in \mathbb N$.
\item[\rm(ii)] If there is some $f \in G$  of the form $f(z) = e^{2\pi
i\lambda}\,z +\dots$ with $\lambda \in{\mathbb  C}\backslash \mathbb Q$ then $f$ is
analytically linearizable in a coordinate that also embeds $G$ in
$\mathbb H_k$.
\end{itemize}

\end{Lemma}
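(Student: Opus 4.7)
My plan rests on the Cerveau--Moussu formal normal form recalled in the preceding remark: every solvable non-abelian $G<\Diff(\mathbb C,0)$ admits a formal diffeomorphism $\hat\varphi$ with $\hat\varphi G\hat\varphi^{-1}\subset\mathbb H_{k}$ for some $k\in\mathbb N$. Both parts (i) and (ii) boil down to upgrading this formal $\hat\varphi$ to a convergent one, and in (ii) to further linearizing $f$ inside $\mathbb H_{k}$.

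For (i), write $\phi_{a,b}$ for the element $z\mapsto az/(1+bz^{k})^{1/k}$ of $\mathbb H_{k}$. A short computation yields the composition law $\phi_{a,b}\circ\phi_{a',b'}=\phi_{aa',\,b'+b(a')^{k}}$ and thus shows that every commutator in $\mathbb H_{k}$ lies in the one-parameter abelian subgroup $\mathbb T_{k}:=\{\phi_{1,c}:c\in\mathbb C\}\cong(\mathbb C,+)$. Hence $\hat\varphi\,[G,G]\,\hat\varphi^{-1}\subset\mathbb T_{k}$, so $[G,G]$ consists of analytic parabolic germs of the form $z+cz^{k+1}+\cdots$. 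The non-cyclicity hypothesis produces two such germs $h_{1},h_{2}\in[G,G]$ with $\mathbb Z$-linearly independent leading coefficients $c_{1},c_{2}\in(\mathbb C,+)$. I would then invoke the \'Ecalle--Voronin rigidity principle: a formal diffeomorphism simultaneously normalizing a commuting pair of analytic parabolic germs with non-proportional tangential data must be convergent, essentially because the analytic centralizer of such a pair already coincides with its formal centralizer. This upgrades $\hat\varphi$ to an analytic conjugation, yielding (i).

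For (ii), set $a=e^{2\pi i\lambda}$. If $\lambda\in\mathbb C\setminus\mathbb R$ then $|a|\neq 1$, so $f$ is hyperbolic and Koenigs' theorem produces an analytic linearization $\psi\,f\,\psi^{-1}(z)=az$. Since $a^{k}\neq 1$, inner conjugation inside $\mathbb H_{k}$ by a suitable $\phi_{1,c}$ sends any $\phi_{a,b}$ to $\phi_{a,0}(z)=az$; absorbing this into $\hat\varphi$ we may assume $\hat\varphi\,f\,\hat\varphi^{-1}(z)=az$ as well. Then $\psi\circ\hat\varphi^{-1}$ lies in the formal centralizer of $z\mapsto az$, which (since $a$ is not a root of unity) reduces to the scalar maps. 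Hence $\hat\varphi$ equals a linear map composed with $\psi$ and is therefore analytic, with $f$ already linear in this coordinate.

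If instead $\lambda\in\mathbb R\setminus\mathbb Q$, I would reduce to (i) by ruling out cyclicity of $[G,G]$. Suppose for contradiction $[G,G]=\langle h\rangle$ with $h(z)=z+a_{k+1}z^{k+1}+\cdots$, $a_{k+1}\neq 0$. Normality of $[G,G]$ in $G$ forces $fhf^{-1}=h^{m}$ for some $m\in\mathbb Z$, and equating the coefficients of $z^{k+1}$ gives $e^{-2\pi i\lambda k}=m$; since $|e^{-2\pi i\lambda k}|=1$ and $m\in\mathbb Z$, we must have $m=\pm 1$, and hence $\lambda\in\mathbb Q$, a contradiction. Thus $[G,G]$ is non-cyclic, (i) gives an analytic $\hat\varphi$, and a further inner conjugation by a suitable $\phi_{1,c}\in\mathbb H_{k}$ (legitimate as $a^{k}\neq 1$ for irrational $\lambda$) sends the image of $f$, some $\phi_{a,b}$, to $\phi_{a,0}(z)=az$, analytically linearizing $f$ while keeping $G$ embedded in $\mathbb H_{k}$. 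The main obstacle I foresee is making precise the \'Ecalle--Voronin convergence lemma in (i): the rigidity statement that a commuting pair of parabolic germs with non-cyclic tangential data forces convergence of any formal conjugation is the substantive analytic input of the argument, after which the remaining steps (Koenigs in the hyperbolic case and the jet-comparison in the irrational rotation case) reduce to formal manipulations inside $\mathbb H_{k}$.
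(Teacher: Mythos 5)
The paper does not actually prove this lemma: part (i) is the Cerveau--Moussu theorem already quoted verbatim in the remark preceding the statement, and part (ii) is cited to \cite{scarduaJDCS}. So your write-up is a reconstruction of the cited arguments rather than a rival to a proof in the text; judged on its own terms it follows the standard route and is essentially sound. The composition law $\phi_{a,b}\circ\phi_{a',b'}=\phi_{aa',\,b'+b(a')^{k}}$ is correct, as is the consequence that inner conjugation by $\phi_{1,c}$ with $c=-b/(a^{k}-1)$ kills the nonlinear part of $\phi_{a,b}$ whenever $a^{k}\neq 1$; the Koenigs-plus-formal-centralizer trick in the hyperbolic case is clean and complete; and the computation $fhf^{-1}=z+ca^{-k}z^{k+1}+\cdots=h^{m}$ forcing $a^{-k}=m\in\{\pm 1\}$, hence $\lambda\in\mathbb Q$, correctly rules out a cyclic commutator subgroup when $\lambda\in\mathbb R\setminus\mathbb Q$ and reduces (ii) to (i).

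Two points deserve care in part (i). First, your reduction ``non-cyclic $\Rightarrow$ two $\mathbb Z$-linearly independent times'' is not valid as stated: a non-cyclic subgroup of $(\mathbb C,+)$ can sit inside $\mathbb Q\,c_{0}$ (e.g.\ the dyadic multiples $\mathbb Z[1/2]\,c_{0}$), in which case every pair of elements is $\mathbb Z$-dependent. The argument survives because the correct form of the rigidity statement is stronger than the one you invoke: if a parabolic germ has nontrivial \'Ecalle--Voronin (horn-map) data, then the set of times of parabolic germs commuting with it is a \emph{discrete cyclic} subgroup $\tfrac{1}{d}\mathbb Z\,t_{0}$ (each nonzero Fourier coefficient $a_{n}$ of a horn map forces commuting times into $\tfrac1n\mathbb Z$), and since $[G,G]$ is abelian and lands in $\mathbb T_{k}$, non-cyclicity of $[G,G]$ directly contradicts this without choosing two independent elements. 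Second, that rigidity statement itself is the substantive analytic input, as you acknowledge; since the paper simply cites Cerveau--Moussu for exactly this implication (``if $[G,G]$ is not cyclic, the conjugacy is analytic''), quoting it is legitimate, but your one-line justification (``the analytic centralizer coincides with the formal centralizer'') should be replaced by the horn-map argument or an explicit reference. With these repairs the proof is complete and matches the intended source arguments.
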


 Another  simple remark is the following:
\begin{Lemma}
\label{Lemma:perezmarcocyclic}
Let $f\in \Diff(\mathbb C,0)$ be a map germ with linear part
$f^\prime(0)= e^{2 \pi \sqrt{-1} \lambda}$ where $\lambda \in \mathbb R \setminus \mathbb Q$. If some iterate of $f$ belongs to some cyclic group generated by a (nRnL) map germ, then $f$ is a (nRnL) map germ.
\end{Lemma}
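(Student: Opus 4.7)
The plan is to argue by contradiction. Suppose $f$ is not (nRnL); since the hypothesis $\lambda\in\mathbb R\setminus \mathbb Q$ already gives non-resonance, this means $f$ is analytically linearizable. By hypothesis there is a (nRnL) germ $g$ and integers $m,n$ (with $n\ge 1$) such that $f^n=g^m$. First I would note that $m\ne 0$: otherwise $f^n=\Id$, which contradicts $f'(0)=e^{2\pi i\lambda}$ with $\lambda$ irrational.

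Next, pick an analytic conjugation $h$ with $h\circ f\circ h^{-1}(z)=L_\lambda(z):=e^{2\pi i\lambda}z$. Then $h\circ f^n\circ h^{-1}=L_{n\lambda}$, and hence $\tilde g:=h\circ g\circ h^{-1}$ satisfies $\tilde g^{\,m}=L_{n\lambda}$. The essential step is now the classical fact that any germ commuting with a linear map $z\mapsto \alpha z$ whose multiplier $\alpha$ is not a root of unity must itself be linear. Indeed, writing $\tilde g(z)=\sum_{k\ge 1}c_k z^k$, the identity $L_{n\lambda}\circ \tilde g=\tilde g\circ L_{n\lambda}$ gives $c_k(\alpha^{k-1}-1)=0$ with $\alpha=e^{2\pi i n\lambda}$; since $n\lambda\in\mathbb R\setminus\mathbb Q$, the factor $\alpha^{k-1}-1$ vanishes only for $k=1$, forcing $\tilde g(z)=c_1 z$.

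Applied here: $\tilde g$ commutes with $\tilde g^{\,m}=L_{n\lambda}$, and $\alpha=e^{2\pi i n\lambda}$ is not a root of unity, so $\tilde g$ is linear. Then $g=h^{-1}\circ\tilde g\circ h$ is analytically linearizable, contradicting that $g$ is (nRnL). This contradiction shows $f$ is not analytically linearizable, and combined with its non-resonance this concludes $f$ is a (nRnL) map germ.

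The only genuinely delicate point is the commutation lemma, but it is a routine power-series manipulation once the irrationality of $n\lambda$ is used; the bookkeeping to make sure $m\ne 0$ and that one is free to pass to the linearizing chart of $f$ is straightforward. No additional hypothesis on $\lambda$ (Brjuno, Diophantine, etc.) is needed, because the argument only uses the algebraic fact that $e^{2\pi i n\lambda}$ is not a root of unity.
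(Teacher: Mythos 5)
Your argument is correct and is essentially the paper's own proof: linearize $f$ (hence $f^n=g^m$), then use the fact that $g$ commutes with $g^m$, whose multiplier $e^{2\pi i n\lambda}$ is not a root of unity, so the power-series commutation computation forces $g$ to be linear in that chart, contradicting that $g$ is (nRnL). Your write-up merely spells out the details (the case $m\ne 0$ and the explicit coefficient identity $c_k(\alpha^{k-1}-1)=0$) that the paper leaves as "a simple computation with power series."
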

\begin{proof}
By the hypothesis, there is some $k \in \mathbb N$ such that the $k$-iterate
$f^{(k)}$ is an iterate say, $g ^{(\ell)}$ of some (nRnL) map $g$, where $\ell \in \mathbb Z$. If  $f$ is not  a (nRnL) map then $f$ is analytically linearizable and the same holds for $f^{(k)}= g^{(\ell)}$. Since $g$ and its iterates commute, and since $g^{(\ell)}$ has a non-periodic linear part,  a simple computation with power series shows that $g$ is analytically linearizable in the same coordinate, absurd.
\end{proof}

The main fact regarding non fully resonant groups we will refer to is the following:

\begin{Proposition}
\label{Proposition:finiteorbitsnonresonant} Let $G\subset \Diff({\mathbb C},0)$ be a finitely
generated  subgroup such that the set $\mathcal C(G)$ of points having closed pseudo-orbit is infinite in a neighborhood $U$ of the origin $0\in {\mathbb C}$. Then we have two possibilities:

\begin{enumerate}
\item $G$ is finite cyclic and analytically conjugate to its linear part.
\item $G$ is abelian,  formally but not analytically linearizable, containing only periodic and  (nRnL) maps.
\end{enumerate}

\end{Proposition}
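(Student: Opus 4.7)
The plan is to combine Nakai's density theorem with the structural lemmas already stated, splitting according to solvability, abelianness, and the presence of a non-resonant element. First, if $G$ were non-solvable, Nakai's theorem \cite{nakai} would force every orbit to be dense either in a neighborhood of $0$ or in one of finitely many real analytic invariant curves through $0$; in both situations every $z\ne 0$ has infinite orbit, so $\mathcal C(G)\subset\{0\}$, contradicting the hypothesis. Hence $G$ is solvable. If moreover every element of $G$ is resonant, Lemma~\ref{Lemma:finiteorbits} applies directly and yields conclusion~(1).

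\textbf{Excluding the non-abelian subcase.} Suppose from now on that $G$ contains a non-resonant $f$ with $f'(0)=e^{2\pi i\lambda}$, $\lambda\in\mathbb R\setminus\mathbb Q$. If $G$ were non-abelian, Lemma~\ref{Lemma:linearization}(ii) would supply analytic coordinates in which $f$ becomes the linear rotation $z\mapsto e^{2\pi i\lambda}z$ and $G$ embeds into $\mathbb H_k$ for some $k\in\mathbb N$. In the variable $w=z^k$ each element of $\mathbb H_k$ is a Möbius map $w\mapsto\alpha w/(1+\beta w)$; the multiplier $\alpha$ defines a homomorphism into $\mathbb C^*$, so every nontrivial commutator has $\alpha=1$ and is therefore parabolic of the form $w\mapsto w/(1+\beta'w)$ with $\beta'\ne 0$. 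Its orbit of any $w_0\ne 0$ is the infinite sequence $\{w_0/(1+n\beta'w_0):n\in\mathbb Z\}$, so every $z\ne 0$ has infinite $G$-orbit, forcing $\mathcal C(G)\subset\{0\}$, contradiction; hence $G$ is abelian.

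\textbf{Abelian case and classification of elements.} The commutation $g(\mu z)=\mu g(z)$ in the formal coordinate linearizing $f$, with $\mu=f'(0)$ not a root of unity, forces each $g\in G$ coefficient by coefficient to be formally linear, so $G$ is simultaneously formally linearizable. If $G$ were analytically linearizable, in the linear coordinate each $g$ would be $z\mapsto a_g z$: should $|a_g|\ne 1$ for some $g$, that element would be a linear contraction with no periodic point off $0$, while if all $|a_g|=1$ the irrational rotation $f$ alone already has dense (infinite) orbits on each circle; either alternative yields $\mathcal C(G)\subset\{0\}$, contradiction. Hence $G$ is not analytically linearizable. If some single non-resonant $g\in G$ were analytically linearizable, the same commuting-with-a-non-resonant-linear argument applied in the linearizing coordinate of $g$ would linearize all of $G$, a contradiction; so every non-resonant element of $G$ is (nRnL). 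For a resonant $g\in G$ with $g'(0)^q=1$, if $g^q\ne\Id$ then $g^q(z)=z+cz^{k+1}+\cdots$ with $c\ne 0$, and Leau--Fatou theory \cite{camacho} supplies infinite orbits of $g^q$ near $0$, again forcing $\mathcal C(G)\subset\{0\}$; hence $g^q=\Id$ and $g$ is periodic. This gives conclusion~(2).

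\textbf{Main obstacle.} The delicate point is the non-abelian subcase: one must not only invoke Lemma~\ref{Lemma:linearization}(ii) but also verify, inside the Möbius normal form $\mathbb H_k$, that every nontrivial commutator is purely parabolic with $\alpha=1$ and $\beta'\ne 0$, and that such parabolic maps never have finite orbits off the fixed point. Once this is secured, the abelian analysis is routine, resting on the classical fact that a formal power series commuting with a non-resonant linear germ must itself be linear.
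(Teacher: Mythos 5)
Your proof is correct and follows essentially the same route as the paper's: Nakai's density theorem gives solvability, Lemma~\ref{Lemma:finiteorbits} disposes of the fully resonant case, Lemma~\ref{Lemma:linearization}(ii) and the embedding into $\mathbb H_k$ exclude the non-abelian case, and the commutation-with-a-non-resonant-germ computation yields formal linearizability and the periodic/(nRnL) dichotomy. The only deviation is in the non-abelian exclusion, where you argue via parabolic commutators in $\mathbb H_k$ --- note that a nontrivial commutator could a priori lie in the kernel of the $k$-th power map (a rotation by a $k$-th root of unity), so you should exhibit a specific commutator with $\beta'\neq 0$, e.g.\ $[f,g]$ for $g$ with nonzero $a_g$ --- whereas the paper simply observes that Lemma~\ref{Lemma:linearization}(ii) makes $f$ an analytically linearizable irrational rotation, whose orbits near $0$ are never closed.
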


\begin{proof}
By Lemma~\ref{Lemma:finiteorbits} if $G$ is  a resonant group then it is cyclic as in (1). Let us then assume that $G$ is not a resonant group. Therefore, let $f\in G$ be a non-resonant diffeomorphism, with linear part $f^\prime(0)=e^{2 \pi \sqrt{-1}\lambda}$  where $\lambda\in \mathbb R \setminus \mathbb Q$.
As in the proof of Lemma~\ref{Lemma:finiteorbits}, first observe that $G$ is solvable by Nakai Density theorem (\cite{nakai}). We have two cases to consider. If $G$ is non-abelian then by a theorem of Cerveau-Moussu m(see \cite{scarduabounded}) the group $G$ is analytically conjugate to a subgroup of $\mathbb H_k$ for some $k \in \mathbb N$ (notice that $G$ contains a non-resonant germ, so that the formal conjugation is actually analytic). Now, the non-resonant map $f$ is analytically linearizable as a consequence of (ii) in Lemma~\ref{Lemma:linearization}. Therefore, the orbits of $f$ outside the origin are not closed. This is a contradiction. Thus we conclude that necessarily  $G$ is abelian. Because the non-resonant map $f$ is formally linearizable, an easy computation with formal power series shows that indeed the group $G$ is {\it formally  linearizable}, i.e, conjugate by a formal diffeomorphism to its linear part. As above, the fact that $G$ has closed orbits (arbitrarily) close to the origin implies that none of its non-resonant maps is analytically linearizable. In other words we have proved:

\begin{Claim}
Each map in $G$ is either a finite order map or it is a (nRnL) map.
\end{Claim}

Now let $G_{\res}\subset G$ be the subgroup generated by the resonant elements in $G$. By Lemma~\ref{Lemma:finiteorbits} $G_{\res}$ is cyclic finite, generated by a rational rotation.
Because $G$ is abelian, $G_{\res}$ is a normal subgroup of $G$. We may therefore use the above to prove that $G$ is the direct product of a group generated by a rational rotation and by a group generated by a (nRnL) map (\cite{camacho-scarduadarbouxlocal}).
\end{proof}

We end this section with the definition of (nRnL) orbit:
\begin{Definition}
\label{Definition:nRnLorbit}
{\rm
A periodic  orbit of a holomorphic flow will be called  a {\it
(nRnL) orbit} if its holonomy map is a (nRnL) map.
}
\end{Definition}

\section{Proof of Theorem~\ref{Theorem:main}}
\label{section:mainresult}

In this section we prove Theorem~\ref{Theorem:main}. In what
follows $X$ is a complete holomorphic vector field with isolated
singularities on a connected Stein surface $N$ satisfying
$H^2(N,\mathbb Z)=0$. We suppose that  $\fa(X)$  is transverse to
the boundary of a compact domain $D \subset N$ biholomorphic to
the closed unit $2$-disc  $\ov D^2 (1) \subset \bc^2$.

\begin{proof}[Proof of Theorem~\ref{Theorem:main}]  First we observe that
since $D$ is biholomorphic to the unit  $2$-disc  $\ov D^2 (1)$ in
$\bc^2$ we may apply \cite{[Ito]} to conclude that $X$ has a
single singularity $P_0 \in D\backslash\po D$ and each orbit which
crosses $\po D$ tends to $P_0$\, and  also $P_0$ must be a simple
singularity in the Poincar\'e domain. By Poincar\'e-Dulac theorem
we have two cases:

\vglue.1in \noindent{\bf $1^{\text{st}}$ Case}:\, $X$ is
analytically conjugate in a neighborhood $V$ of $P_0$ to
$X_\la$\,, $\la \notin \re_-$\,.

If $\la \in \bq_+$ then every leaf of $\fa(X)$ in $V$ contains a
separatrix of $X$ and we claim that such a leaf must be
biholomorphic to $\bc^* = \bc-\{0\}$. Indeed, a leaf $L$ of
$\fa(X)$ is a nonsingular orbit of the (complete) flow of $X$ and
therefore it is conformally equivalent to one of the following
Riemann surfaces: $\bc$, $\bc^*$ or a compact complex torus
$\bc/(\bz\oplus\bz)$\,. Since a Stein manifold contains no compact
curve the only possibilities are $L \cong \bc$ or $L \cong \bc^*$.
If $L \cong \bc$ then $\ov L \supset L \cup \{0\}$ and therefore
there is a holomorphic mapping $\ov\bc = \bc \cup \{\infty\} \to
\ov L \subset N$ which implies that $\ov L$ is compact again
yielding a contradiction. Therefore $L \cong \bc^*$. Now,
according to \cite{Suzuki1} the flow of $X$ does have a generic
orbit: there exists an invariant subset $\sigma \subset N$ with
zero transverse logarithmic capacity, such that the orbits of $X$
in $N-\sigma$ are pairwise biholomorphic. Clearly $(N-\sigma) \cap
V$ contains points in generic orbits and therefore the generic
orbit of $X$ is conformally equivalent to $\bc^*$. This implies by
\cite{Suzuki1} that the flow of $X$ admits a nonconstant
meromorphic first integral on $N$.

Regarding the flow linearization on $D$ we consider $\la \in
\bc\backslash\re_-$ and observe that, as remarked above, the basin
of attraction $B_{P_0}$ of $P_0$ contains the domain $D$. The flow
of $X$ gives a holomorphic conjugation between $X$ and $X_\la$ in
the basin $B_{P_0}$\,, we can therefore take $U = B_{P_0}$ and
$\Phi(U) = \bc^2$.

\vglue.1in

\noindent{\bf $2^{\text{nd}}$ Case}:\, $X$ is analytically
conjugate in a neighborhood $V$ of $P_0$ to $Y_n$\,, $n \in \bn$.

Again the flow gives an extension of the conjugation in $V$ to the
basin $B_{P_0}$ and since the basin of $Y_n$ is $\bc^2$ we have $U
= B_{P_0}$ and $\phi(U) = \bc^2$.

\vglue.1in
\par Finally we observe that since $P_0 \in D \subset B_{P_0}$ we
have $B_{P_0} = N$ if and only if every leaf of $\fa(X)$ crosses
$\po D$. This proves the theorem.\end{proof}

\section{Proof of Theorem~\ref{Theorem:equivalences}}
\label{Section:equivalences} Let us pave the way to the proof of
Theorem~\ref{Theorem:equivalences}. The argumentation in
\ref{subsubsection:Dicriticalness} and
\ref{subsubsection:Nondegeneracy} below  is partially inspired in
\cite{[Cerveau-Scardua]}.   Therefore, {\sl in what follows $X$ is
a complete holomorphic vector field on a connected Stein surface
$N$ with $H^2(N,\bz) = 0$}. We shall preserve the same notation
and will refer to the the proof of Theorem~\ref{Theorem:main}.

\subsection{Transversality}
\label{subsubsection:Transversality}
 We first assume that $X$ satisfies the
transversality condition (2.i), i.e., $X$ is transverse to $\po
D_j$\,, $\forall\,j$ where $\bigcup \limits_{j\in \mathbb N} D_j =
N$ is an exhaustion of $N$ by domains biholomorphic to the closed
$2$-disc $\ov D^2(1) \subset \bc^2$. By Theorem~\ref{Theorem:main}
given any index $j\in \mathbb N$ we have $\sing(X)\cap D_j
=\{P_j\}$ and the singularity $P_j$ is in the Poincar\'e domain,
moreover the flow of $X$ in $D_j$ gives a  holomorphic conjugation
$\Phi_j$ of $X\big|_{B_{P_j}}$ with  a  vector field  $Z_j$ which
is either linear $Z_j=X_{\lambda_j} = x\,\frac{\po}{\po x} +
\lambda_j y\,\frac{\po}{\po y}$\,, $\lambda_j \in
\bc\backslash\re_-$\,, or is  a  Poincar\'e-Dulac normal form
vector field $Z_j=Y_{n_j} = y\,\frac{\po}{\po y} +
(n_jx+y^{n_j})\,\frac{\po}{\po x}$\,, $n_j \in
\bn\backslash\{1\}$. The basin of attraction $B_{P_j}$ equals to
$N$ if and only if every leaf of $\fa(X)$ crosses the boundary
$\po D_{j}$. We claim that this is the case for each $j\in\mathbb
N$. Indeed, since $\bigcup \limits_{j\in \mathbb N} D_j = N$ is an
exhaustion of $N$ we must have $P_{j_1}=P_{j_2}, \, \,
Z_{j_1}=Z_{j_2}$ and also $B(P_{j_1})=B(P_{j_2}), \,  \forall
j_1,j_2 \in \mathbb N$. On the other hand, since $D_j\subset
B_{P_j}$ and $\bigcup \limits_{j\in \mathbb N} D_j = N$ we
conclude $\bigcup\limits_{j\in\mathbb N} B_{P_j} = N$ and from the
above remark $B_{P_j}=N$ for every $j \in \mathbb N$. Notice that
this already proves that (2.i) implies (1) in
Theorem~\ref{Theorem:equivalences}.

Now we assume that $X$ satisfies transversality condition (2.ii).
Then by Theorem~\ref{Theorem:main} we conclude that the
conjugation $\Phi$ between $X$ and one of the models $X_\lambda$
and $Y_n$ extends to all the manifold $N$ and therefore $\Phi$ is
a biholomorphism from $N$ onto $\bc^2$. This proves that (2.ii)
implies (1) in Theorem~\ref{Theorem:equivalences}.

\subsection{Dicriticalness}
\label{subsubsection:Dicriticalness}

\noindent We begin with a basic remark. Suppose that
  $X$ has some dicritical singularity in $N$.
By the argumentation in the first case in the proof of
Theorem~\ref{Theorem:main}, the generic orbit of $X$ is $\bc^*$
and $\fa(X)$ has a meromorphic first integral $\psi$ in $N$. Now
we proceed our study case by case.

First we assume that $X$ has a singularity $P_0\in \sing(X)$ such
that the first jet of $X$ at $P_0$ is a nondegenerate singularity
in the Poincar\'e domain. Suppose that $\fa(X)$ has a meromorphic
first integral (e.g., if $X$ has a dicritical singularity in $N$).
Since $\fa(X)$ has a meromorphic first integral in $N$ the
singular point $P_0$ also admits a meromorphic first integral and
we conclude from Poincar\'e-Dulac normal form theorem
\cite{Arnold} that necessarily the vector field $X$ is
linearizable in a neighborhood of $P_0$ conjugate to a vector
field of the form $X_\lambda$ where $\lambda= \frac{k}{\ell} \in
\bq_+$ (indeed, the nonlinear normal form $Y_n$ admits no local
meromorphic first integral because the holonomy of the invariant
manifold $\{y=0\}$ is not finite since it is nontrivial and
tangent to the identity).

Assume now that $X$ has at a singularity $P_0\in \sing(X)$ a first
jet of the form $j_{P_0}^1\,X = \la_1x\,\frac{\po}{\po x} +
\la_2y\,\frac{\po}{\po y}$ with $\frac{\la_1}{\la_2} =
\frac{k}{\ell} \in \bq_+$. In this case  $P_0$ is a dicritical
singularity  and therefore $\fa(X)$ has a meromorphic first
integral on $N$. From the above $X$ is linearizable in a
neighborhood of $P_0$, indeed in the basin of attraction
$B_{P_0}$.

Finally, we assume that   $X$ is analytically conjugate to a
vector field  $X_\la$ with $\la \in \bq_+$ in a neighborhood of a
singular point $P_0$. Then,  necessarily $X$ is conjugate to
$X_\la$  in $B_{P_0}$ and therefore  we can write $X(x,y) =
x\,\frac{\po}{\po x} + \frac{k}{\ell}\,y\,\frac{\po}{\po y}$\,,
$k,\ell \in \bn$ in suitable holomorphic coordinates in $B_{P_0}$.
Since $N$ is a Stein space with $H^2(N,\bz) = 0$ we can write the
meromorphic function $\psi$ as $\psi = \frac{f_1}{f_2}$ for some
holomorphic function $f_j\colon N \to \bc$ and such that $f_1$\,,
$f_2$ are locally relatively prime. By the local form of $X =
x\,\frac{\po}{\po x} + \frac{k}{\ell}\,y\,\frac{\po}{\po y}$ we
conclude that $f$ is indeed of the form $f = \frac{\tilde
f_1^k}{\tilde f_2^{\ell}}$ for some reduced holomorphic functions
$\tilde f_j \colon N \to \bc$, $j=1,2$ such that $\{f_1=0\} \cap V
= \{x=0\}$ and $\{f_2=0\} \cap V = \{y=0\}$.

\begin{Lemma} \label{Lemma:boundary} Assume that $X$ has some singularity $P_0$ which is
nondegenerate and in the Poincar\'e domain and suppose that $X$
has some dicritical singularity in $N$ or, more generally, that
$\fa(X)$ has a meromorphic first integral in $N$. Then the
boundary $\po B_{P_0}$ is a
 {\rm(}possibly empty{\rm)} union of isolated nondicritical singularities of $X$
and analytic curves, each curve accumulates a unique nondicritical
singularity of $X$.
\end{Lemma}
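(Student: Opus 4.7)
The plan is to exploit the explicit structure of $X$ on $B_{P_0}$ provided by Theorem~\ref{Theorem:main} together with the hypothesis that $\fa(X)$ admits a meromorphic first integral. As discussed in \S\ref{subsubsection:Dicriticalness}, we may choose linearizing holomorphic coordinates identifying $B_{P_0}$ with $\bc^2$ in which $X\big|_{B_{P_0}}$ becomes $X_\lambda$ with $\lambda=k/\ell\in\bq_+$ and $\gcd(k,\ell)=1$, and such that the reduced meromorphic first integral $\psi=\tilde f_1^k/\tilde f_2^\ell$ restricts to $\psi(x,y)=x^k/y^\ell$ on $B_{P_0}$. In particular, the generic leaf of $\fa(X)$ is biholomorphic to $\bc^*$, and every fiber of $\psi$ in $B_{P_0}$ is an irreducible $\bc^*$-orbit.

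The set $\po B_{P_0}$ is closed in $N$ and flow-invariant, hence a disjoint union of orbits and singularities of $X$. By Suzuki's theorem (iv) in \S\ref{section:suzukitheory}, every leaf of $\fa(X)$ is closed in $N\setminus\sing(X)$, so each orbit $L'\subset\po B_{P_0}$ has an irreducible analytic closure $\ov{L'}$ in $N$ meeting $N\setminus L'$ only at points of $\sing(X)$. Since $N$ contains no compact analytic curve, $\ov{L'}$ is non-compact; as $L'\cong\bc^*$ (the case $L'\cong\bc$ being analogous), the set $\ov{L'}\setminus L'$ consists of a single singularity $q=q(L')$, for otherwise adding two punctures would compactify $L'$ to $\bp^1$. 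This already yields the decomposition of $\po B_{P_0}$ into isolated singularities and analytic curves, each curve accumulating at a unique singularity of $X$.

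The central step, and the main obstacle, is to show that every singularity $q\in\sing(X)\cap\po B_{P_0}$, whether isolated in the boundary or the accumulation point of one of the curves, is nondicritical. I would argue by contradiction: if $q$ were dicritical then $\psi$ would have an indeterminacy at $q$, so in every neighborhood $V$ of $q$ the meromorphic function $\psi$ would take every value of $\bp^1$, and infinitely many fibers of $\psi$ would have $q$ in their closure. Since $q\in\po B_{P_0}$, such a $V$ meets $B_{P_0}$, where the foliation is the pencil $\{x^k=cy^\ell\}$ from the model above. Combining this explicit model with the discreteness of the indeterminacy locus of $\psi$ on the Stein surface $N$, the strategy is to show that no second indeterminacy of $\psi$ can lie in $\po B_{P_0}$: two distinct $\bc^*$-fibers of $\psi|_{B_{P_0}}$ cannot share a boundary limit at $q$ without either producing a compact analytic component of some $\{\psi=c\}$ (excluded by Steinness of $N$) or contradicting the single $\bc^*$-component structure of the generic fiber provided by Suzuki's theorem. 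Once this rigidity of $\psi$ on $\ov{B_{P_0}}$ is established, $q$ is necessarily nondicritical; the uniqueness of the accumulation singularity $q(L')$ of each boundary curve and the isolatedness of the ``free'' boundary singularities then follow from the same analysis.
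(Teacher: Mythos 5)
There is a genuine gap: your decomposition of $\po B_{P_0}$ silently assumes that every boundary orbit accumulates at least one singularity, and this is exactly the hard part of the lemma. Your sentence ``the set $\ov{L'}\setminus L'$ consists of a single singularity, for otherwise adding two punctures would compactify $L'$ to $\bp^1$'' only excludes \emph{two or more} limit singularities; it does not exclude \emph{zero}, i.e.\ a boundary leaf $L'\cong\bc^*$ that is closed in $N$ and accumulates no singularity at all. Such a leaf would violate the conclusion (``each curve accumulates a unique nondicritical singularity''), and ruling it out is precisely Step~1 of the paper's proof: a closed boundary leaf $L_0\cong\bc^*$ has finite holonomy (because of the meromorphic first integral), so a holomorphic one-form $\al$ on $L_0$ with $\int_\ga\al=1$ extends to $\tilde\al$ on $N$ (Steinness and $H^2(N,\bz)=0$), and the lifted loops force all nearby leaves to have nontrivial homology --- contradicting that leaves $L\subset B_{P_0}$ meeting a transversal to $L_0$ satisfy $L\cup\{P_0\}\cong\bc$. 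Nothing in your proposal substitutes for this argument. Likewise, the case $L'\cong\bc$ is not ``analogous'': a leaf $\cong\bc$ cannot accumulate even one singularity (that already compactifies it to $\bp^1$), so it would have to be a closed planar curve in $\po B_{P_0}$, again contradicting the stated conclusion. The paper disposes of this in Step~2 by observing that $X|_{B_{P_0}}$ is conjugate to the periodic flow of $X_{k/\ell}$, hence by the Identity Principle the whole flow is periodic and \emph{every} nonsingular orbit is $\bc^*$; you never invoke periodicity.

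Your treatment of nondicriticality is closer to the mark but is presented as a strategy rather than a proof. The clean argument, which is the one the paper uses, is: if $q\in\po B_{P_0}$ were dicritical, its basin is open and (since $\fa(X)$ has a meromorphic first integral) all leaves through a small neighborhood of $q$ accumulate $q$; some of these leaves lie in $B_{P_0}$ and hence also accumulate $P_0$, and a leaf accumulating two distinct singularities would be contained in a compact rational curve, impossible in a Stein surface. Your appeal to ``rigidity of $\psi$ on $\ov{B_{P_0}}$'' and to excluding a ``second indeterminacy'' of $\psi$ gestures at the same obstruction but is not carried out; you should replace it by the two-singularities-compactify argument, and you must first supply the missing Steps 1 and 2 before the decomposition into curves-plus-isolated-points is legitimate.
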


\begin{proof} Since $\po B_{P_0}$ is invariant by $\fa(X)$
if it is nonempty then it is a union of leaves of $\fa(X)$ and
singular points of $X$. We divide the argumentation in several
steps.

\vglue.1in \noindent{\bf Step 1}:\, $\po B_{P_0}$ contains no
closed leaf biholomorphic to $\bc^*$.
\begin{proof}[Proof of Step 1]
Let $L_0 \subset \po B_{P_0}$ be a leaf of $\fa(X)$ biholomorphic
to $\bc^*$. Since $\fa(X)$ admits a meromorphic first integral,
either $L_0$ is closed in $N$ or it accumulates only on singular
points, i.e., $\ov L_0 \subset L_0 \cup \sing (\fa(X))$. Suppose
$L_0$ is closed in $N$ then it is an analytic smooth curve in $N$.
Since $N$ is Stein and $H^2(N,\bz)= 0$ we can obtain a reduced
equation $\{h=0\}$ for $L_0$ in $N$, where $h\colon N \to \bc$ is
holomorphic. Since $L_0$ is a real surface diffeomorphic to a
cylinder $S^1 \times \re$ by a result of Richards
\cite{[Richards]} we can take a generator $\ga\colon S^1 \to L_0$
of the homology of $L_0$ and a holomorphic one-form $\al$ on $L_0$
such that $\displaystyle\int_\ga \al = 1$. Since $N$ is Stein with
$H^2(N,\bz) = 0$ there is a holomorphic one-form $\tilde\al$ on
$N$ which extends $\al$. Because the holonomy of $L_0$ is finite
(recall that $\fa(X)$ has a meromorphic first integral on $N$)
there is a fixed power of $\ga$ which has closed lifts
$\tilde\ga_z$ to the leaves $L_z$ of $\fa(X)$ that contain the
points $z \in \Sigma$, where $\Sigma$ is a sufficiently small
transverse disc to $\fa(X)$ with $\Sigma \cap L_0 = \{p_0\} \in
\ga(S^1)$.  Thus, for $z \in \Sigma$ close enough to $p_0$ we have
$\big|\displaystyle\int_{\tilde\ga_z} \tilde\al -
\displaystyle\int_{\tilde\ga_{p_0}}\tilde \al \big| < \frac 12$\,,
but $\tilde\ga_{p_0} = \ga$ and, since $\ga \subset L_0$\,,
$\displaystyle\int_{\tilde\ga_{p_0}} \tilde\al =
\displaystyle\int_\ga \tilde\al\big|_{L_0} = \displaystyle\int_\ga
\al = 1$ so that $\displaystyle\int_{\tilde\ga_z} \tilde\al \ne
0$. Or the other hand $\tilde\al$ is holomorphic to that
$\tilde\al\big|_{L_z}$ is holomorphic and therefore closed what
implies, since $\tilde\ga_z \subset L_z$ is closed, that $L_z$ has
nontrivial homology and therefore necessarily $L_z \cong \bc^*$.
Furthermore, every leaf intersecting $\Sigma$ (small enough
transverse disc intersecting $L_0$) cannot extend to (cannot embed
into) a holomorphic curve $S^1 \hookrightarrow \bc^2$ with trivial
homology. However, since $L_0 \subset \po B_{P_0}$ there are
leaves $L$ of $\fa(X)$ such that $L$ intersect discs $\Sigma$ as
above and which satisfy $L \subset B_{P_0}$\,. Such a leaf $L$
accumulates $P_0$ and therefore $L \cup \{P_0\}$ is a holomorphic
curve biholomorphic to $\bc$ and therefore with trivial homology;
yielding a contradiction.
\end{proof}

\noindent{\bf Step 2}:\, All leaves of $\fa(X)$ are biholomorphic
to $\bc^*$.
\begin{proof}[Proof of Step 2]
Indeed, in the open set $B_{P_0}$ the flow of $X$ is conjugate to
the periodic flow of $X_\la$\,, $\la = k/\ell$ so in $B_{P_0}$
there is a time $\tau \in \bc \backslash\{0\}$ such that the flow
$X^t$ of $X$ satisfies $X^{t+\tau} = X^t$, $\forall\, t \in \bc$.
By the Identity Principle ($N$ is always assumed to be connected)
we get that the flow of $X$ is periodic of period $\tau$ and
therefore all nonsingular orbits are biholomorphic to $\bc^*$.
\end{proof}

\noindent{\bf Step 3}:\, $\po B_{P_0}$ is a union of isolated
nondicritical singularities and analytic curves each of these
curves contains a unique nondicritical singularity of $X$.
\begin{proof}[Proof of Step 3]
In fact, by the first two steps each leaf $L$ contained in
$B_{P_0}$ must be biholomorphic to $\bc^*$ and is not closed in
$N$ so that it accumulates some singularity $P$ of $X$ and since
$\ov L \supset L \cup \{P\} \simeq \bc^* \cup \{0\} = \bc$ and
$\ov L$ cannot be compact, it follows that $\ov L = L \cup
\{P_0\}$ is an analytic curve in $N$ and $L$ accumulates no other
singularity of $X$.

\par Now we study the isolated points of $\po B_{P_0}$. Given an
isolated point $P \in \po B_{P_0}$ clearly $P \in \sing(X)$. If
$P$ is a dicritical singularity then, since $\fa(X)$ has a
meromorphic first integral, all leaves close enough to $P$
accumulate $P$ and this is not possible for leaves $L \subset
B_{P_0}$ because, as we have seen, a leaf cannot accumulate two
distinct singularities (such a leaf would be contained in a
rational curve in $N$ what is not possible because $N$ is Stein).
Thus $P$ is not dicritical and $\fa(X)$ has a holomorphic first
integral in a neighborhood of $P$.

Finally we observe that if a leaf $L \subset \po B_{P_0}$
accumulates at a  singularity $P \in \sing(X)$ then, as above, this
singularity is nondicritical: the basin of attraction of a
dicritical singularity is open and contains an open neighborhood
of the singularity so that, since $P \in \po B_{P_0}$\,, some leaf
$L_1 \subset B_{P_0}$ intersects this neighborhood and therefore
$L_1$ accumulates both $P_0$ and $P$ which is impossible. Thus $P$
is nondicritical. \end{proof}

 Lemma~\ref{Lemma:boundary} is proved as a consequence of the three steps above.\end{proof}

\begin{Lemma} \label{Lemma:biholomorphictoplane}  Let $X$ be a complete holomorphic
vector field on a connected Stein surface $N$ with $H^2(N,\mathbb
Z)=0$ and $P_0\in \sing(X)$ be a dicritical linearizable
singularity.  Under the above framework assume that the number of
analytic curves in $\po B_{P_0}$ is finite {\rm(}e.g., if
$\sing(X)$ is a finite set{\rm)}. Then $N = B_{P_0}$ and in
particular $N$ is biholomorphic to $\bc^2$.
\end{Lemma}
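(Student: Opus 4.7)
The plan is to use the $\bc^{*}$-action arising from the periodic flow of $X$ in order to extend the linearization biholomorphism $\La:B_{P_{0}}\to\bc^{2}$ to a global holomorphic map $\Pi:N\to\bc^{2}$, and then invoke the identity principle on $\La^{-1}\circ\Pi$ to force $N\subset B_{P_{0}}$. From the paragraphs preceding the lemma, $X|_{B_{P_{0}}}$ is conjugated by $\La$ to $X_{\la}=x\,\po_{x}+(k/\ell)\,y\,\po_{y}$ with $\gcd(k,\ell)=1$, so the flow is periodic of period $\tau=2\pi\sqrt{-1}\,\ell$ on $B_{P_{0}}$ and, by the identity principle applied to the connected manifold $N$ (cf.\ Step~2 in the proof of Lemma~\ref{Lemma:boundary}), on all of $N$. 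This yields a global $\bc^{*}$-action on $N$ which in the linearizing chart of $B_{P_{0}}$ reads $s\cdot(x,y)=(s^{\ell}x,s^{k}y)$. The same preceding paragraphs furnish globally defined $\tilde f_{1},\tilde f_{2}\in\O(N)$ with $\psi=\tilde f_{1}^{k}/\tilde f_{2}^{\ell}$ and $\tilde f_{1}|_{B_{P_{0}}}=u_{1}\cdot x$, $\tilde f_{2}|_{B_{P_{0}}}=u_{2}\cdot y$ for units $u_{j}\in\O(B_{P_{0}})^{*}$.

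Next, for any $h\in\O(N)$ and any $p\in N$ the map $s\mapsto h(s\cdot p)$ is holomorphic on $\bc^{*}$, so its Laurent coefficients
\[
h^{(w)}(p) \;:=\; \frac{1}{2\pi\sqrt{-1}}\oint_{|s|=1}h(s\cdot p)\,s^{-w-1}\,ds
\]
define global holomorphic functions on $N$ which are weight-$w$ semi-invariants of the $\bc^{*}$-action, i.e.\ $h^{(w)}(s\cdot p)=s^{w}\,h^{(w)}(p)$. Expanding $u_{1}(x,y)=\sum_{i,j\ge 0}c_{ij}x^{i}y^{j}$ with $c_{00}=u_{1}(P_{0})\neq 0$, the monomials of $\tilde f_{1}|_{B_{P_0}}=u_{1}x$ are $c_{ij}x^{i+1}y^{j}$ of weights $\ell(i+1)+kj$; since $k,\ell>0$ the only solution of $\ell(i+1)+kj=\ell$ with $i,j\ge 0$ is $i=j=0$, so $\tilde f_{1}^{(\ell)}|_{B_{P_{0}}}=c_{00}\,x$, and the analogous computation gives $\tilde f_{2}^{(k)}|_{B_{P_{0}}}=u_{2}(P_{0})\,y$. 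Hence
\[
\tilde x\;:=\;\tilde f_{1}^{(\ell)}/u_{1}(P_{0}),\qquad \tilde y\;:=\;\tilde f_{2}^{(k)}/u_{2}(P_{0})
\]
are global holomorphic functions on $N$ extending the linearizing coordinates $x,y$ from $B_{P_{0}}$.

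Set $\Pi:=(\tilde x,\tilde y):N\to\bc^{2}$; by construction $\Pi|_{B_{P_{0}}}=\La$, so $\La^{-1}\circ\Pi:N\to N$ is a holomorphic self-map of $N$ whose image lies in $B_{P_{0}}$ and which coincides with $\Id_{B_{P_{0}}}$ on the nonempty open set $B_{P_{0}}$. The identity principle for holomorphic maps between connected complex manifolds then forces $\La^{-1}\circ\Pi=\Id_{N}$ on all of $N$; for any $p\in N$ this gives $p=\La^{-1}(\Pi(p))\in B_{P_{0}}$, whence $N=B_{P_{0}}$ and the lemma follows. The main obstacle is the weight-decomposition step: one must verify that the averaging integral produces a global holomorphic function (which follows from the holomorphic dependence of $s\cdot p$ on $(s,p)\in\bc^{*}\times N$ together with differentiation under the integral sign) and that the weight-$\ell$, respectively weight-$k$ part of $\tilde f_{j}|_{B_{P_0}}$ really equals $u_{j}(P_{0})$ times the corresponding coordinate; the latter works precisely because both weights $\ell,k$ are strictly positive, a consequence of $\la=k/\ell\in\bq_{+}$.
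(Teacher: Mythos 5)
Your argument is correct, but it reaches the conclusion by a genuinely different mechanism than the paper's proof. The paper argues topologically: the finiteness hypothesis makes $\po B_{P_0}$ a thin set, so $N=B_{P_0}\cup \po B_{P_0}$; a hypothetical boundary curve $L_0=\{f=0\}$ is then excluded by pushing a small loop $\ga$ around $L_0$ (with $\int_\ga df/f=2\pi\sqrt{-1}$) into the simply connected basin $B_{P_0}\cong \bc^2$, where the closed holomorphic form $df/f$ must integrate to zero, and isolated boundary points are excluded analogously using $H_2(B_{P_0},\re)=0$. You instead use the global $\bc^*$-action coming from the periodicity of the flow, extract the weight-$\ell$ and weight-$k$ Fourier components of the global functions $\tilde f_1,\tilde f_2$ to extend the linearizing coordinates to all of $N$, and let the identity principle force $N=B_{P_0}$. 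What your route buys is that the finiteness of the set of curves in $\po B_{P_0}$ is never invoked: it is replaced by the global inputs $\psi=\tilde f_1^{k}/\tilde f_2^{\ell}$ and the global periodicity of the flow, both supplied by the preceding framework; what it costs is a heavier reliance on those global objects. Two small points to tighten: the framework only gives $\tilde f_1=(\text{unit})\cdot x$ on a neighborhood $V$ of $P_0$ rather than on all of $B_{P_0}$, but this is harmless because the weight-$\ell$ component on $B_{P_0}\cong\bc^2$ is read off from the Taylor expansion at $P_0$ alone (for $k=1$ the competing weight-$\ell$ monomial $y^{\ell}$ is ruled out precisely because that germ is divisible by $x$); and in checking $h^{(w)}(s_0\cdot p)=s_0^{w}h^{(w)}(p)$ you should note that the circle $|s|=|s_0|$ may be replaced by $|s|=1$ since the integrand is holomorphic on all of $\bc^{*}$. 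Neither point affects the validity of the proof.
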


\begin{proof}  Let us first prove that $N=B_{P_0}\cup \po B_{P_0}$.
Indeed, put $A = N-\po B_{P_0}$ and $B =
B_{P_0}$\,. Then $A$ is a connected open subset of $N$ because
under the hypotheses above $\po B_{P_0}$ is a thin set and
therefore it does not disconnect $N$. Since $B$ is also open and
connected and $\po A = \po B$ it follows that $A=B$ because $B
\subset A$. Therefore $N = B_{P_0} \cup \po B_{P_0}$\, and
$N\setminus \po B_{P_0}=B_{P_0}$. Let us prove that
 $\po B_{P_0} = \emptyset$. Suppose
there is some analytic curve (leaf) $L_0 \subset \po B_{P_0}$\,.
We take a reduced equation $L_0 : \{f=0\}$ for $L_0$\,, given by
$f\colon N \to \bc$ holomorphic. Define then the meromorphic
one-form $\al = \frac{df}{f}$ on $N$. Given a transverse disc
$\Sigma \cong \bd$ to $\fa(X)$ with $\Sigma \cap L_0 = \{0\}$ we
consider a simple loop $\ga\colon S \to \Sigma$ around $p \in
\Sigma$ such that $\displaystyle\int_\ga \al = 2\pi\sqrt{-1}$.
 We can assume that $\ga(S^1) \subset B_{P_0}$ because
$\Sigma\setminus (\Sigma \cap \po B_{P_0})\subset B_{P_0}$ and
$\po B_{P_0}$ is thin. Since $B_{P_0}$ is simply-connected it
follows that $\ga$ is null-homotopic in $N\backslash\po B_{P_0} =
B_{P_0}$ and since $\al$ is closed and holomorphic in $B_{P_0}$ it
follows that $\displaystyle\int_\ga \al = 0$ yielding a
contradiction. This proves that $\po B_{P_0}$ must have dimension
zero, i.e., it is a (discrete) subset of $\sing(X)$. Now a similar
argumentation for an isolated point $p \in \po B_{P_0}$ shows
that, since $H_2(B_{P_0},\re) = 0$, we must have $\po B_{P_0} =
\emptyset$. Therefore $N = B_{P_0}$ and the lemma is
proved.\end{proof}

\subsection{First jet resonance}
\label{subsubsection:Nondegeneracy}
 In the proof of Theorem~\ref{Theorem:equivalences} we shall need the
 following lemma:

\begin{Lemma} \label{Lemma:firstjet}  Let $X$ be a {\rm complete} holomorphic
vector field on a connected Stein surface $N$ with $H^2(N,\mathbb
Z)=0$ and $P_0 \in \sing(X)$ an {\rm isolated} singularity of $X$.
Then the first jet of $X$ at $P_0$ is non zero.
\end{Lemma}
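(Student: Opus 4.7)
The plan is to suppose for contradiction that $j^1_{P_0}X=0$, lift the flow of $X$ to the normalization of a local separatrix at $P_0$, and contradict completeness via orders of vanishing.

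First, I invoke the Camacho--Sad separatrix theorem applied to $\fa(X)$ at the isolated singularity $P_0$ to obtain an irreducible analytic invariant curve germ $S$ through $P_0$. Let $\phi\colon (D,0)\to (N,P_0)$ be its Puiseux parametrization, so that $\phi(D\setminus\{0\})$ is contained in a single leaf $L$ of $\fa(X)$ accumulating at $P_0$ along $S$. I enlarge $L$ to a Riemann surface $\tilde L := L\cup\{\tilde p_0\}$ by attaching one extra point $\tilde p_0$ so that $\phi\colon D\to \tilde L$ becomes a local biholomorphism at $0$; this yields a holomorphic map $\pi\colon \tilde L\to N$ extending $L\hookrightarrow N$ and sending $\tilde p_0$ to $P_0$. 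The global flow $\vr_t$ of $X$ preserves $L$ and, because it preserves $\fa(X)$ and is close to the identity for small $|t|$, also preserves the local germ $S$ at $P_0$; hence it extends by $\tilde\vr_t(\tilde p_0):=\tilde p_0$ to a holomorphic $\bc$-action on $\tilde L$ whose infinitesimal generator is a complete holomorphic vector field $\tilde X$ with $\tilde X(\tilde p_0)=0$ and $\tilde X$ nowhere zero on $L$.

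Next, I classify the Riemann surface $\tilde L$. It is non-compact, since a compact $\tilde L$ would map under $\pi$ to a compact analytic curve in the Stein surface $N$. If $\tilde L$ were hyperbolic, then the orbits of $\tilde X$ would produce nonconstant holomorphic maps $\bc\to\tilde L$, which is forbidden by Kobayashi hyperbolicity, so $\tilde X\equiv 0$, contradicting $\tilde X\not\equiv 0$ on $L$. On $\bc^*$ every complete holomorphic vector field has the form $\alpha w\,\partial_w$ with $\alpha\in\bc$ and is nowhere zero on $\bc^*$, contradicting $\tilde X(\tilde p_0)=0$. Therefore $\tilde L\cong \bc$ and $\tilde X$ is conjugate to an affine field $(\alpha w+\beta)\partial_w$ with $\alpha\ne 0$ and a simple zero at $\tilde p_0$; in particular $\operatorname{ord}_{\tilde p_0}\tilde X = 1$.

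Finally I compare orders of vanishing. Pick a local coordinate $w$ on $\tilde L$ at $\tilde p_0$ and write $\pi(w)=(x(w),y(w))$ in local coordinates at $P_0$, with $X=A(x,y)\partial_x+B(x,y)\partial_y$. Set $p=\operatorname{ord}_0 x$, $q=\operatorname{ord}_0 y$, and $m=\min(p,q)\geq 1$. Since $j^1_{P_0}X=0$ we have $A,B\in (x,y)^2$, so $A(x(w),y(w))$ and $B(x(w),y(w))$ have $w$-order $\geq 2m$. Writing $\tilde X=h(w)\partial_w$, the pullback identities $h(w)\,x'(w)=A(x(w),y(w))$ and $h(w)\,y'(w)=B(x(w),y(w))$ yield $\operatorname{ord}_0 h \geq 2m-(p-1)$ and $\operatorname{ord}_0 h\geq 2m-(q-1)$; using the coordinate that realizes $m=\min(p,q)$ one obtains $\operatorname{ord}_0 h\geq m+1\geq 2$, contradicting $\operatorname{ord}_0 h=1$ from the previous paragraph. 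This contradiction shows that $j^1_{P_0}X$ cannot vanish.

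The main obstacle I anticipate is the construction of $\tilde L$ and the proof that the $\bc$-action extends holomorphically to $\tilde p_0$: this uses invariance of the separatrix germ under $\vr_t$ for small $|t|$, the flow property to reach arbitrary $t$, and Riemann's removable singularity theorem to pass from continuity to holomorphy at $\tilde p_0$. The remaining ingredients --- Camacho--Sad, the classification of complete holomorphic vector fields on noncompact Riemann surfaces, and the order estimate --- are then routine.
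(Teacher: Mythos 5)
Your proof is correct and follows essentially the same route as the paper: Camacho--Sad produces a separatrix, the closure of the corresponding orbit is uniformized by $\bc$, the restricted flow lifts to a complete vector field $az\,\frac{\po}{\po z}$ with a simple zero at the point over $P_0$, and this is played against the vanishing of the first jet. Your concluding order-of-vanishing count on the pullback identities $h\,x'=A\circ\phi$, $h\,y'=B\circ\phi$ is simply a cleaner, fully explicit version of the paper's terse Taylor-expansion contradiction.
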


\begin{proof}  We basically follow \cite{[Rebelo]}. By the
Separatrix Theorem \cite{[Camacho-Sad]} there exists a germ of
invariant analytic curve $\Ga$ (possibly singular at $P_0$) with
$P_0 \in \Ga$. Let $L_0 \subset N$ be the (non-singular) orbit of
$X$ that contains $\Ga-\{P_0\}$. We have two possibilities:

\noindent{$1^{\text{st}}$}:\,  $L_0$ is biholomorphic to $\bc$.

This is not possible because $N \supset \ov L_0 \supset L_0 \cup
\{P_0\} = L_0 \cup \Ga \cong \ov \bc$ and $N$ contains no compact
curve.

\noindent{$2^{\text{nd}}$}:\,  $L_0$ is biholomorphic to $\bc^*$.

In this case $L_0$ is closed off $\sing(X)$ and therefore $\ov L_0
= L_0\cup \{P_0\} = L_0 \cup \Ga \cong \bc$. We have a global
``Puiseaux" parametrization $\sigma\colon \bc \to \ov L_0 \subset
N$ $($even if $\Ga$ is singular at $0)$ which is an injective
holomorphic map $\sigma$ with $\sigma(0) = P_0$ and $\sigma'(z)
\ne 0$, $\forall\,z \in \bc\setminus \{0\}$.
 Thus we can lift the vector field $X^* = X\big|_{\ov
L_0}$ to a holomorphic vector field $\widetilde X^*$ on $\bc$.
 The equation $\sigma_*(\widetilde X^*) = X^*$ implies
that $\sigma$ conjugates the flows of $\widetilde X^*$ and $X^*$
and since $X^*$ is complete with $X^*(P_0) = 0$ we conclude that
$\widetilde X^*$ is a complete holomorphic vector field on $\bc$
with $\widetilde X^*(0) = 0$. According to
\cite{[Cerveau-Scardua]} this implies that $\widetilde X^*(z) =
az$ for some $a \in \bc\backslash\}0\}$ because $\widetilde X^*
\not\equiv 0$.

Now we consider the Taylor development of $\sigma\colon \bc \to
\bc^2$ as
$$
\sigma(z) = \sigma^{(1)}(0)\bullet z +
\frac{1}{2!}\,\sigma^{(2)}(0)\bullet z^2 +\cdots+
\frac{1}{j!}\,\sigma^{(j)}(0)\bullet z^j +\dots
$$
Suppose by contradiction that $DX(P_0) = 0$ then $DX^*(P_0) = 0$
and we have from the conjugacy equation for the flows
 $X_t^*(\sigma(z)) = \sigma\big(\widetilde X_t^*(z)\big), \forall\,t \in \bc$ that
$$
X_t^*\big(\sigma^{(1)}(0)\bullet z + \sum_{j\ge2}
\frac{1}{j!}\,\sigma^{(j)}(0)\bullet z^j\big) =
\sigma^{(1)}(0)\bullet\widetilde X_t^*(z) + \sum_{j\ge2}
\frac{1}{j!}\, \sigma^{(j)}(0)\bullet \big(\widetilde
X_t^*(z)\big)^j \, , \,\forall\,t \in \bc.$$

\noindent Since $\sigma(z) \in \ov L_0$\,$\forall\,z$ and $X_t^* =
X_t\big|_{\ov L_0}$ and $\widetilde X_t^*(z) = ze^{at}$, we can
write
$$
X_t^*\big(\sigma^{(1)}(0)\bullet z + \sum_{j\ge2}
\frac{1}{j!}\,\sigma^{(j)}(0)\bullet z^j\big) =
\sigma^{(1)}(0)\bullet(ze^{at}) + \sum_{j\ge2}
\frac{1}{j!}\,\sigma^{(j)}(0)\bullet(ze^{at})^j \, , \, \forall\,t
\in \bc.$$

\noindent This gives a contradiction and proves the
lemma.\end{proof}

\noindent We also use

\begin{Lemma} \label{Lemma:nonnilpotent}  Let $X$ be a {\rm complete} holomorphic
vector field on a connected Stein surface $N$ with $H^2(N,\mathbb
Z)=0$ having a {\rm non-nilpotent} $($isolated$)$ singularity at
$P_0 \in N$. Then we have two possibilities for the first jet of
$X$ at $P_0$\,.
\begin{itemize}\itemsep=-1pt
\item[\rm(i)] $j_{P_0}^1\,X(x,y) = \la_1\,x\frac{\po}{\po x} +
\la_2y\,\frac{\po}{\po y}$\,, \,\, $\la_1\la_2 \ne 0$.
\item[\rm(ii)] $j_{P_0}^1\,X(x,y) = (\la_1x + y)\,\frac{\po}{\po
x} + \la_1\,y \frac{\po}{\po y}$\,, \, $\la_1 \ne 0$.
\end{itemize}
In particular $X$ is analytically linearizable around $P_0$ if
there are at least three separatrices at $P_0$\,.
\end{Lemma}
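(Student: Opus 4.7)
My plan is to classify the first jet $A := DX(P_0)$ through its Jordan normal form, combining Lemma~\ref{Lemma:firstjet} with the existence of an analytic separatrix at $P_0$ and Poincar\'e--Dulac theory. By Lemma~\ref{Lemma:firstjet}, $A\neq 0$, and non-nilpotence ensures at least one eigenvalue of $A$ is non-zero. A linear holomorphic coordinate change reduces $A$ to one of three Jordan forms: (a) $A=\operatorname{diag}(\la_1,\la_2)$ with $\la_1\la_2\neq 0$, which yields case (i); (b) $A=\operatorname{diag}(\mu,0)$ with $\mu\neq 0$, the saddle-node case, which I will exclude; and (c) $A$ a non-trivial Jordan block with eigenvalue $\la_1\neq 0$, which yields case (ii) after rescaling the off-diagonal entry to $1$.

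The heart of the argument is the exclusion of (b). By the Camacho--Sad theorem there is an analytic separatrix $\Ga$ through $P_0$. As in the proof of Lemma~\ref{Lemma:firstjet}, the orbit $L_0=\Ga\setminus\{P_0\}$ is biholomorphic to $\bc^*$ and admits a global Puiseux parametrization $\sigma\colon\bc\to\ov{L_0}$ pulling $X$ back to the complete linear vector field $\widetilde X^*(z)=az$ with $a\neq 0$. Expanding the conjugation equation $X_t(\sigma(z))=\sigma(ze^{at})$ in Taylor series at $z=0$ shows that the leading Puiseux coefficient of $\sigma$ is an eigenvector of $A$ with non-zero eigenvalue $am$, where $m$ is the order of tangency. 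In case (b) the only eigenvector with non-zero eigenvalue is $(1,0)$, so $\Ga$ must be tangent to the strong direction; the same argument applied to a hypothetical analytic invariant curve tangent to the weak direction $(0,1)$ would force the weak eigenvalue to be non-zero, a contradiction. To exclude the remaining saddle-node with divergent weak separatrix, I argue dichotomously: if the local holonomy of $\Ga$ is the identity, the foliation integrates locally and the weak separatrix is analytic (contradiction); if the holonomy is a non-trivial tangent-to-identity germ, the restriction of $X$ along the formal weak invariant curve is a non-complete vector field on $\bc$, contradicting the global completeness of $X$ via an extension of the Puiseux argument to formal invariant curves. This last step is the main obstacle, since it requires translating the formal divergence of the weak separatrix into a global obstruction to the completeness of~$X$.

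For the ``in particular'' assertion, suppose $P_0$ admits three or more separatrices. The Puiseux argument above shows that each separatrix is tangent to an eigenvector of $A$ with non-zero eigenvalue. If the three tangent directions are pairwise distinct, $A$ has three independent eigendirections in $\bc^2$, forcing $A=\la\, I$ with $\la\neq 0$; the Poincar\'e--Dulac resonance equation $\la=\la(m_1+m_2)$ has no solution with $m_1+m_2\geq 2$, so $X$ has no resonant monomials and is analytically linearizable to $\la\bigl(x\,\tfrac{\po}{\po x}+y\,\tfrac{\po}{\po y}\bigr)$. If only two tangent directions appear among the three separatrices, then one eigendirection of $A$ supports at least two distinct analytic invariant curves. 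This is possible only when $A=\operatorname{diag}(\la_1,\la_2)$ has ratio $\la_1:\la_2=1:n$ (or $n:1$) with $n\in\bn$, and the presence of the extra analytic invariant curve tangent to that direction forces the Poincar\'e--Dulac resonant monomial coefficient to vanish; again $X$ is analytically linearizable, as $\la_1 x\,\tfrac{\po}{\po x}+\la_2 y\,\tfrac{\po}{\po y}$.
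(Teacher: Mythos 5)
Your reduction to the three Jordan types and your handling of types (a) and (c) agree with the paper's proof, and your observation that the Puiseux/completeness argument of Lemma~\ref{Lemma:firstjet} forces the tangent direction of every \emph{analytic} separatrix to be an eigenvector of $DX(P_0)$ with non-zero eigenvalue is correct. The genuine gap is exactly where you flag it: the exclusion of the saddle-node case $A=\operatorname{diag}(\mu,0)$ when the weak (central) separatrix diverges. Your dichotomy on the holonomy of the strong separatrix cannot close this case: a non-trivial tangent-to-identity holonomy of an analytic separatrix is perfectly compatible with completeness of $X$ --- for the complete field $Y_1=y\,\frac{\po}{\po y}+(x+y)\,\frac{\po}{\po x}$ the separatrix $\{y=0\}\setminus\{0\}$ has holonomy $y\mapsto y/(1+ay)$, $a\ne 0$ --- so no contradiction comes from the holonomy alone, and a merely formal weak invariant curve gives you no analytic curve on which to run the Puiseux argument. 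The missing ingredient, which the paper supplies, is Rebelo's theorem on singular points of complete (semicomplete) flows \cite{[Rebelo]}: when $j^1_{P_0}X\ne 0$ is non-nilpotent of rank one, $\fa(X)$ has two smooth transverse invariant manifolds through $P_0$, so in particular the central manifold converges. Once it is analytic, the restriction of $X$ to it is of the form $(x^{k+1}+\hot)\frac{\po}{\po x}$ with $k\ge 1$, whereas completeness forces this restriction to be $\sigma_*\big(az\,\frac{\po}{\po z}\big)$; that is the contradiction. Without this (or an equivalent semicompleteness analysis at the singular point) your Step (b) does not go through.

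Two smaller points on the ``in particular'' clause. Your claim that two separatrices tangent to the same eigendirection force the eigenvalue ratio $1:n$ is false: for $\la_1/\la_2=k/\ell$ with coprime $k,\ell\ge 2$ the linear model has infinitely many separatrices $\{x^k=c\,y^\ell\}$, all tangent to one axis. This does not damage your conclusion (there are no resonances there, so linearizability is automatic), but the stated justification is wrong. Also, you never treat the possibility that all three separatrices share a single tangent direction, which is the only way case (ii) could survive the hypothesis; the paper disposes of it via Briot--Bouquet, which gives a \emph{unique} separatrix for a non-trivial Jordan block, and via Martinet--Ramis (exactly two separatrices) in the Siegel case $\la_1/\la_2\in\re_-$.
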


\begin{proof}  We divide the proof into several steps:

\vglue.1in

\noindent{\bf Step 1}:\, $j_{P_0}^1\,X$ has rank two.

\begin{proof}[Proof of Step 1]  Since $j_{P_0}^1\,X$ is non-nilpotent we
may assume that $j_{P_0}^1\,X(x,y) = \la_1y\,\frac{\po}{\po y}$
with $\la_1 \ne 0$. The (germ of the) vector field $X$ (at $P_0$)
is formally conjugate to a vector field $Y =
\la_1y\,\frac{\po}{\po y} + \frac{x^{k+1}}{1+\mu
x^k}\,\frac{\po}{\po x}$\,, with $k \ge 1$, $\mu \in \bc$. Since
$j_{P_0}^1\,X$ is not zero, according to \cite{[Rebelo]} $\fa(X)$
has two smooth and transverse invariant manifolds $\Ga_x$ and
$\Ga_y$ through $P_0$ tangent to the coordinate axes $\te_x$ and
$\te_y$ respectively. Again if we consider the leaf $L_0$
containing $\Ga_x-\{P_0\}$ then $L_0 \cong \bc^*$ and $\ov L_0 =
L_0 \cup \{P_0\} = L_0 \cup \Ga_x$ is parameterized by a map
(one-to-one) $\sigma\colon \bc \to \ov L_0$ with $\sigma(0) =
P_0$\,, and $X^* = X\big|_{\ov L_0}$ lifts to a vector field
$\widetilde X^*$ on $\bc$ with $\sigma_*(\widetilde X^*) = X^*$
and $\widetilde X^*(z) = az\,\frac{\po}{\po z}$ for some $a \in
\bc^*$. Therefore $\sigma_*\left(az\,\frac{\po}{\po z}\right)$ is
formally conjugate to the vector field $Y^* = \frac{x^{k+1}}{1+\mu
x^k}\,\frac{\po}{\po x}$ and therefore it is of the form
$$
\sigma_*\left(az\,\frac{\po}{\po z}\right) = \big(x^{k+1} +
h.o.t.\big)\,\frac{\po}{\po x} \quad (h.o.t. = \text{ higher order
terms})
$$
and this is not possible. Thus $j_{P_0}^1\,X$ has rank
two.\end{proof}

\noindent{\bf Step 2}:\, $j_{P_0}^1\,X$ is of type (i) or (ii) as
stated:

\begin{proof}[Proof of Step 2]  Since $j_{P_0}^1\,X$ has rank two we
conclude that

(1)\quad $j_{P_0}^1\,X = \la_1x\,\frac{\po}{\po x} +
\la_2y\,\frac{\po}{\po y}$\,, $\la_1\cdot\la_2 \ne 0$ or

(2)\quad $j_{P_0}^1\,X = (\la_1x+y)\,\frac{\po}{\po x} +
\la_2y\,\frac{\po}{\po y}$\,, \, $\la_1 \ne 0$.

\noindent By the classical theorem of Briot-Bouquet
 a germ of type (1) or (2) above admits a {\it
smooth\/} separatrix which is unique in case (2). In case (1) if
$\la_1/\la_2 \notin \re_-$ then the singularity is either resonant
with $\lambda_1/\lambda_2=n\in\mathbb N$  conjugate to the
Poincar\'e-Dulac normal form $Y_n$ or it is analytically
linearizable and either there is exactly one separatrix, there
exactly two separatrices or there are infinitely many and
$\la_1/\la_2 \in \bq_+$\,. Assume now that $\la_1/\la_2 \in \re_-$
then by \cite{[Martinet-Ramis]} there are exactly two
separatrices, which are smooth and transverse at $P_0$\,. This
proves Step 2.
\end{proof} The lemma follows from the three steps above.
\end{proof}

We can summarize the above discussion as follows:

\begin{Proposition} \label{Proposition:equivalences}  Let $X$ be a {\rm complete}
holomorphic vector field with finitely many singularities on a
connected Stein surface $N$ with $H^2(N,\bz) = 0$.
\begin{itemize}\itemsep=-1pt
\item[\rm(i)] Suppose $X$  exhibits a dicritical singularity $P_0
\in \sing(\fa(X))$ with non  degenerate linear part $($i.e.,
$DX(P_0)$ is nonsingular$)$ then there is a holomorphic
diffeomorphism $\Phi\colon N \to \bc^2$ which takes the flow to a
linear flow of $X_\la$ with $\la = \frac{k}{\ell}\in \mathbb Q_+$
on $\bc^2$.

\item[{\rm(ii)}] A non-nilpotent dicritical singularity of $X$ is
necessarily linearizable. This is the case of a non-nilpotent
singularity of $X$ having at least three separatrices. A simple
singularity of $X$ is necessarily non-nilpotent.

\item[\rm(iii)] Suppose $\fa(X)$ is transverse to the boundary
$\po D$ of some compact domain $D \subset N$ biholomorphic to the
closed unit disc $\ov D^2 (1) \subset \bc^2$ such that each leaf
of $\fa(X)$ crosses $\po D$. Then there exists a holomorphic
diffeomorphism $\Phi\colon N \to \bc^2$ conjugating $X$ to $X_\la$
$(\la \in \bc\backslash\re_-)$ or to $Y_n$ $(n\in\bn)$ on $\bc^2$.

\noindent As a consequence of {\rm(ii)} and {\rm(iii)} we obtain:
\item[\rm(iv)] Suppose  $\fa(X)$ is transverse to $\po D$ where $D
\subset N$ is biholomorphic to $\ov D^2 (1)$.  Then there is a
holomorphic diffeomorphism $\Phi\colon N \to \bc^2$ conjugating
$X$ to  $X_\la$\,, $\la = \frac{k}{\ell}$\,, $k,\ell \in \bn$,
provided that either of the following conditions is satisfied:
\begin{itemize}\itemsep=-1pt
\item[\rm(a)] $\fa(X)$ has a meromorphic first integral in a
neighborhood of $D$. \item[\rm(b)] $\fa(X)$ has infinitely many
closed leaves in $D\backslash\sing(\fa(X))$. \item[\rm(c)]
$\fa(X)$ has some dicritical singularity in $N$. \item[\rm(d)]
$\fa(X)$ has at least three closed leaves in
$D\backslash\sing(\fa(X))$. \item[\rm(e)] $\fa(X)$ has some closed
leaf in $D\backslash\sing(\fa(X))$ with finite holonomy group.
\end{itemize}
\end{itemize}
\end{Proposition}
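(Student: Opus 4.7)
The plan is to assemble Proposition~\ref{Proposition:equivalences} as a synthesis of Theorem~\ref{Theorem:main}, Lemmas~\ref{Lemma:boundary}--\ref{Lemma:nonnilpotent}, and Theorem~\ref{Theorem:stability}, plus standard local normal-form facts.

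For \textbf{(i)}, I would first observe that a dicritical singularity supplies infinitely many local separatrices, so by the argument opening \S\ref{subsubsection:Dicriticalness} (Suzuki's theorem applied as in Case~1 of the proof of Theorem~\ref{Theorem:main}), the generic orbit is $\bc^*$ and $\fa(X)$ carries a meromorphic first integral on $N$. Since $DX(P_0)$ is non-singular, Poincar\'e--Dulac classifies $X$ around $P_0$ as either $X_\la$ or $Y_n$; a local meromorphic first integral is incompatible with $Y_n$ (whose $\{y=0\}$-holonomy is a nontrivial homography of infinite order) and with $X_\la$ for $\la\notin\bq_+$, forcing $X$ to be linearizable as $X_{k/\ell}$ near $P_0$. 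The flow then spreads this linearization to all of $B_{P_0}$, and Lemma~\ref{Lemma:biholomorphictoplane} (applicable since $\sing(X)$ is finite, hence $\po B_{P_0}$ has finitely many analytic curves) yields $N=B_{P_0}\cong\bc^2$.

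For \textbf{(ii)}, I would invoke Lemma~\ref{Lemma:nonnilpotent}: a non-nilpotent isolated singularity has first jet of type (i) or (ii) there. A dicritical one has infinitely many local separatrices, so neither the Poincar\'e--Dulac case $Y_n$, nor the Jordan case, nor $X_\la$ with $\la\notin\bq_+$ (which admits at most two separatrices by Briot--Bouquet and \cite{[Martinet-Ramis]}) can occur; we are left with $X_\la$, $\la=k/\ell\in\bq_+$, which is linearizable. The same trichotomy shows that three or more separatrices force the rational, linearizable case. Finally, a simple singularity has non-vanishing eigenvalues and so is non-nilpotent. Part \textbf{(iii)} is just Theorem~\ref{Theorem:main} together with its own closing assertion that $U=N$ precisely when every leaf crosses $\po D$; the extra hypothesis in (iii) is exactly that condition.

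For \textbf{(iv)}, by Theorem~\ref{Theorem:main} there is already a conjugation of $X|_U$ with $X_\la$ or $Y_n$ on an invariant neighborhood $U\supset D$; I only need each of (a)--(e) to upgrade the model to $X_\la$ with $\la\in\bq_+$ and to force $U=N$. Condition (a) rules out $Y_n$ and $X_\la$ with $\la\notin\bq_+$ as in (i), after which Lemma~\ref{Lemma:biholomorphictoplane} gives $N\cong\bc^2$. Condition (c) is (i)--(ii) verbatim. Condition (b) yields a dicritical singularity (infinitely many local closed leaves accumulate a singular point, which must therefore be dicritical), reducing to (c). For (d), each of the three closed leaves is a $\bc^*$ whose closure is a separatrix at some singularity in $D$; since $D$ contains the unique singularity $P_0$ of $X|_D$, we obtain three separatrices through $P_0$, a non-nilpotent point by Lemma~\ref{Lemma:firstjet} and Step~1 of Lemma~\ref{Lemma:nonnilpotent}, so (ii) applies and then (a) finishes. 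For (e), Theorem~\ref{Theorem:stability} produces a meromorphic first integral on $N$, reducing to (a).

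The most delicate point is verifying that the conditions (d) and (e) genuinely propagate from local data on $D$ to the global picture; specifically, in (d) one must argue that three closed $\bc^*$-leaves in $D$ yield three genuinely distinct separatrices at $P_0$, which rests on the fact that in the $X_\la/Y_n$ models the only closed leaves are separatrices through the singularity, together with the classification in Lemma~\ref{Lemma:nonnilpotent}. The rest is bookkeeping among the already established normal-form and basin-of-attraction results.
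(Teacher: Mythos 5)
Your proposal is correct and follows essentially the same route as the paper: item (i) via the meromorphic first integral from dicriticalness, Poincar\'e--Dulac, and Lemma~\ref{Lemma:biholomorphictoplane} (with finiteness of $\sing(X)$ guaranteeing finitely many boundary curves), item (ii) via Lemmas~\ref{Lemma:firstjet} and~\ref{Lemma:nonnilpotent}, item (iii) via Theorem~\ref{Theorem:main}, and item (iv) by reducing each of (a)--(e) to the dicritical/first-integral case. Your write-up merely makes explicit the implications among (a)--(e) that the paper compresses into the remark that these conditions are all equivalent.
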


\begin{proof}  Item (i) follows from the remark that any
leaf $L \subset \po B_{P_0}$ must accumulate one and only one
singularity $P(L)$ in $N\backslash\ov{B_{P_0}}$ and this
singularity is nondicritical (otherwise its basis of attraction
would be open). Thus, the singularity $P(L)$ has only finitely
many separatrices, what implies that the map $\po B_{P_0} \ni L
\mapsto P \in \sing(\fa(X))$ is a finite-to-one map and since
$\sharp\sing(\fa(X)) < \infty$ this implies that the number of
leaves on $\po B_{P_0}$ is finite. Now we apply the above
Lemma~\ref{Lemma:biholomorphictoplane} to conclude.

Item (ii) follows from Lemmas~\ref{Lemma:firstjet} and
\ref{Lemma:nonnilpotent}.  Item (iii) is stated in
Theorem~\ref{Theorem:main}. Item (iv) follows from (i), (ii) and
(iii) if we observe that conditions (a), (b), (c) and  (d) and (e)
are all equivalent.\end{proof}

\begin{proof}[Proof of Theorem~\ref{Theorem:equivalences}]
It is clear that (1) implies (2) because if $N$ is biholomorphic
to $\bc^2$ then we can equip $N$ with a complete holomorphic
vector field $X$ which is conjugate to $X_\lambda, \lambda \in
\mathbb C \setminus \mathbb R_-$ and also with a complete
holomorphic vector field $Y$ conjugate to $Y_n$. Let us prove that
(2) implies (1). Case (a) follows from the discussion in
\ref{subsubsection:Transversality}. Case (c) follows from
\ref{subsubsection:Dicriticalness} summarized in
Proposition~\ref{Proposition:equivalences} (i). Case (b) follows
from Proposition~\ref{Proposition:equivalences} (ii). Finally,
Case (d) in Theorem~\ref{Theorem:equivalences} follows from the
proof of Step 1 in the course of the proof of
Lemma~\ref{Lemma:boundary} or also as a straightforward
consequence of Proposition~\ref{Theorem:stability} below.
\end{proof}

\section{Proof of Theorem~\ref{Theorem:stability}:  global stability for holomorphic flows}
\label{section:stability}

As a consequence of the arguments in the preceding section  we
obtain the following global stability result for holomorphic flows
on connected Stein surfaces. We detail the arguments in  the proof of Theorem~\ref{Theorem:stability} announced in \cite{Scarduacomplexvariables}:

\begin{proof}[Proof of Theorem~\ref{Theorem:stability}]  Regarding
the hypothesis that $L_0$ is
closed we recall that, according to \cite{Suzuki1}, an orbit $L$
biholomorphic to $\bc^*$ is closed outside $\sing(\fa(X))$ and the
closure $\ov L_0 \subset N$ is an irreducible analytic curve in
$N$. Let $L_0$ be a closed orbit with finite holonomy group. Since
$N$ is Stein and $H^2(N,\bz) = 0$ there is a reduced equation
$\{f=0\}$ for $L_0$ in $N$, where $f\colon N \to \bc$ is
holomorphic.

Now we take a generator $\ga\colon S^1 \to L_0$ of the homology of
$L_0$ and a holomorphic one-form $\al$ in $L_0$ with the property
$\displaystyle\int_\ga \al = 1$. Since $L_0$ has finite holonomy
it follows that given a transverse disc $\sum \approx \bd$ to
$L_0$ with $\Sigma \cap L_0 = \{P_0\}$, $P_0 \notin
\sing(\fa(X))$, some power of the loop $\ga$ lifts as a closed
loop $\ga_z$ to the leaf $L_z$ of $\fa(X)$ through the point $z
\in \Sigma$ if $z$ is closed enough to $p_0$\,. Since $N$ is a
Stein manifold the one-form $\al$ is the restriction of a
holomorphic one-form $\tilde\al$ in $N$. For $z \in \Sigma$ close
enough to $p_0$ we have $\displaystyle\int_{\ga_z} \tilde\al \ne
0$.

Now, $\displaystyle\int_{\ga_z} \tilde\al =
\displaystyle\int_{\ga_z} \tilde\al\big|_{L_z}$ because $\ga_z
\subset L_z$ and since the restriction $\tilde\al\big|_{L_z}$ is
closed (because its holomorphic) we conclude that $L_z$ has
nontrivial homology for all $z \in \Sigma$ close enough to
$p_0$\,. Therefore, necessarily, $L_z \cong \bc^*$\, $\forall\,z
\in \Sigma$, close enough to $p_0$\,.  This implies that the
generic orbit of $X$ in $N$ is biholomorphic to $\bc^*$ and
$\fa(X)$ admits a meromorphic first integral on $N$.\end{proof}

\noindent According to what we have remarked in the beginning of
the above proof  we can equivalently state:

\begin{Proposition} \label{Proposition 5.9}  Let $X$ be a complete holomorphic
vector field in a connected Stein surface $N$ with $H^2(N,\bz) =
0$ and having an orbit $L_0$ biholomorphic to $\bc^*$,
accumulating no singularity of $\fa(X)$ and having finite holonomy
group. Then $\fa(X)$ admits a meromorphic first integral on $N$.
\end{Proposition}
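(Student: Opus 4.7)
\medskip

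\noindent\textbf{Proof proposal.} The plan is to reduce this proposition to Theorem~\ref{Theorem:stability} by first upgrading the hypothesis ``$L_0$ accumulates no singularity of $\fa(X)$'' to ``$L_0$ is closed in $N$''. Indeed, by Suzuki's Theorem (ii) any orbit biholomorphic to $\bc^*$ is closed in $N\setminus\sing(\fa(X))$, so its closure $\ov{L_0}$ in $N$ can fail to equal $L_0$ only by picking up singular points; since by hypothesis $L_0$ accumulates no point of $\sing(\fa(X))$, the orbit $L_0$ is actually closed in $N$. With closedness in hand, Theorem~\ref{Theorem:stability} applies verbatim and yields the meromorphic first integral.

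For completeness I would repeat the core of the argument in the spirit of the proof of Theorem~\ref{Theorem:stability}. Since $N$ is Stein with $H^2(N,\bz)=0$, the closed analytic curve $\ov{L_0}=L_0$ admits a reduced global equation $\{f=0\}$ with $f\colon N\to\bc$ holomorphic. Because $L_0\cong\bc^*$ its first homology is generated by a loop $\gamma\colon S^1\to L_0$, and on $L_0$ there is a holomorphic one-form $\alpha$ with $\int_\gamma\alpha=1$ (for instance $\alpha=\frac{1}{2\pi\sqrt{-1}}\frac{dw}{w}$ in the uniformizing coordinate $w$ on $L_0\cong\bc^*$). The Stein hypothesis combined with $H^2(N,\bz)=0$ allows one to extend $\alpha$ to a global holomorphic one-form $\tilde\alpha$ on $N$ with $\tilde\alpha\big|_{L_0}=\alpha$.

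Next I would use the finite holonomy of $L_0$: taking a small transverse disc $\Sigma$ through a basepoint $p_0\in L_0\setminus\sing(\fa(X))$, some power $\gamma^m$ of $\gamma$ lifts as a \emph{closed} path $\tilde\gamma_z\subset L_z$ for every $z\in\Sigma$ sufficiently close to $p_0$, where $L_z$ is the leaf through $z$. Continuous dependence of periods on $z$, together with the nonvanishing $\int_{\tilde\gamma_{p_0}}\tilde\alpha=m\cdot\int_\gamma\alpha=m\neq 0$, ensures $\int_{\tilde\gamma_z}\tilde\alpha\neq 0$ for $z$ close to $p_0$. Since $\tilde\alpha\big|_{L_z}$ is holomorphic and hence closed, this forces the loop $\tilde\gamma_z$ to be homologically nontrivial in $L_z$; but a nonsingular orbit of $X$ is biholomorphic to $\bc$ or $\bc^*$ (Stein kills tori), so nontrivial homology forces $L_z\cong\bc^*$.

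Finally, the open set of points through which the leaf is $\bc^*$ is nonempty, so by Suzuki's dichotomy on the generic orbit (Suzuki's theorem, part (i)), the generic orbit of $X$ is $\bc^*$. Invoking Suzuki's theorem (iv) we obtain a nonconstant meromorphic first integral of $\fa(X)$ on $N$, as desired. The main technical obstacle is the global extension of $\alpha$ from $L_0$ to $\tilde\alpha$ on $N$, which is exactly where the Stein plus $H^2(N,\bz)=0$ hypotheses are used, together with the careful holonomy-based argument showing that nearby leaves inherit nontrivial homology; once these are in place, Suzuki's theorem finishes the job.
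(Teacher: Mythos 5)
Your proposal is correct and follows essentially the same route as the paper: the paper presents this Proposition as an equivalent restatement of Theorem~\ref{Theorem:stability}, justified precisely by the observation (from Suzuki) that a $\bc^*$-orbit is closed off $\sing(\fa(X))$, so that accumulating no singularity forces $L_0$ to be closed in $N$. The detailed period/holonomy argument you reproduce is the same one the paper uses in its proof of Theorem~\ref{Theorem:stability}.
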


Finally we also have

\begin{Corollary} \label{Corollary:stability}  Let $X$ be a complete holomorphic
vector field with finite singular set on a connected Stein surface
$N$ with $H^2(N,\bz) = 0$. Suppose that $X$ has infinitely many
orbits biholomorphic to $\bc^*$ with finite holonomy groups. Then
$\fa(X)$ admits a meromorphic first integral.
\end{Corollary}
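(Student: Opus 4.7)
My plan is to combine a pigeonhole argument on accumulation sets with two results already established earlier in the paper: Proposition~\ref{Proposition 5.9} and the consequence of dicriticalness recorded at the opening of Section~\ref{subsubsection:Dicriticalness}.

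First, I would observe that for every orbit $L\cong \bc^*$ of $X$, item (ii) of Suzuki's theorem (recalled in Section~\ref{section:suzukitheory}) forces $L$ to be closed in $N\setminus \sing(\fa(X))$; hence its accumulation set $A_L:=\overline{L}\setminus L$ is contained in the finite set $\sing(X)$. Since there are only finitely many such subsets, a pigeonhole argument applied to our infinite family of $\bc^*$-orbits with finite holonomy yields a single $A\subseteq \sing(X)$ that is realised by infinitely many orbits of the family. From here I would split the argument according to whether $A=\emptyset$ or $A\neq\emptyset$.

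If $A=\emptyset$, infinitely many orbits in the family are already closed in $N$ and have finite holonomy by hypothesis; applying Proposition~\ref{Proposition 5.9} to any one of them delivers a nonconstant meromorphic first integral for $\fa(X)$ on $N$. If instead $A\neq\emptyset$, I would fix $P\in A$ and note that for every $L$ in the subfamily the analytic curve $\overline L$ passes through $P$, so it contributes at least one germ of separatrix there. Because distinct orbits $L,L'$ are disjoint, the germs produced by $L$ and $L'$ are distinct germs of separatrix at $P$. Consequently $P$ carries infinitely many separatrices, so by definition $P$ is a dicritical singularity of $X$. The remark opening Section~\ref{subsubsection:Dicriticalness} (derived from the first case in the proof of Theorem~\ref{Theorem:main}) then yields the desired meromorphic first integral on $N$.

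The principal subtlety I anticipate is the translation step performed in the case $A\neq\emptyset$: one must pass from "infinitely many global $\bc^*$-orbits whose closures contain $P$" to "$P$ is dicritical in the paper's local sense of infinitely many local orbits in a neighborhood accumulating only at $P$". The bridge is Suzuki's closure theorem, which guarantees that each $\overline L$ is an analytic curve through $P$ and hence determines a genuine germ of separatrix at $P$; the disjointness of the $\bc^*$-orbits forces these germs to be pairwise distinct, matching the local definition of dicriticalness used in Section~\ref{subsubsection:Dicriticalness}.
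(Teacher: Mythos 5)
Your proof is correct and takes essentially the same route as the paper: both arguments reduce to the dichotomy ``some singularity is dicritical'' versus ``some orbit of the family is closed in $N$ with finite holonomy'', and then conclude via, respectively, the fact that a dicritical singularity forces a meromorphic first integral and the stability statement for a closed $\bc^*$-orbit with finite holonomy group. The only difference is organizational: you reach the dichotomy by a pigeonhole on accumulation sets inside the finite singular locus, whereas the paper argues contrapositively that nondicritical singularities carry only finitely many separatrices, so only finitely many of the given $\bc^*$-orbits can fail to be closed.
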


\begin{proof}  We already know that if $\fa(X)$ has some
dicritical singularity then it admits a meromorphic first
integral. Assume therefore that each singularity of $\fa(X)$ is
nondicritical, i.e., exhibits only a finite number of
separatrices. Let $L$ be a leaf of $\fa(X)$ which contains some
separatrix of a singularity $p_0 \in \sing(\fa(X))$.  Then
necessarily $L$ is biholomorphic to $\bc^*$; otherwise $L \cong
\bc$ and $L \cup \{p_0\} \cong \ov\bc$ gives a compact curve in
$N$ what is not possible. Therefore $L$ is closed outside
$\sing(\fa(X))$ and $L \cup \{p_0\} \cong \bc$ so that $L$
contains no other separatrix of $\fa(X)$ and therefore we conclude
that a nonclosed leaf biholomorphic to $\bc^*$ of $\fa(X)$
accumulates exactly one singularity of $\fa(X)$. Since the numbers
of singularities and separatrices are finite it follows from the
hypothesis that $\fa(X)$ has some closed leaf biholomorphic to
$\bc^*$ with finite holonomy group and we may apply
Proposition~\ref{Theorem:stability} to conclude.\end{proof}

We close the paper with the following question:

\begin{Question}
Let $X$ be a complete holomorphic vector field  on $\bc^2$ with a
linearizable singularity of type $X_\lambda, \, \lambda \in \bc
\setminus \{0\}$ at the origin $0\in\bc^2$. Is $X$ linearizable on
$\bc^2$\,?
\end{Question}

\section{Proof of Theorem~\ref{Theorem:flowsstability}: flows with many periodic orbits}

Let $X$ be a {\rm complete} holomorphic vector field on a connected Stein surface $N$ such that $H^2(N,\bz) = 0$. Denote by $\xi(X)\subset N$ the subset of periodic orbits, i.e., orbits diffeomorphic to $\mathbb C^*$. Assume that $\xi(X)$ contains infinitely many orbits and that these orbits have some convergence orbit, say $L_0$. We have two possibilities:

\noindent{\bf Case 1}.  $L_0$ is a periodic orbit.
 In this case it is closed off $\sing(X)$. If it is not closed in $N$ then there is a single singular point $q \in \sing(X)$ which is accumulated by $L_0$. If $q$ is dicritical then, as we have already see, there is a meromorphic first integral for $\fa(X)$. Assume then that $q\in \sing(X)$ is not dicritical. Since $q$ is a convergence point for  $\xi(X)$ and $q$ is non-dicritical, the germ of foliation $(\fa(X),q)$ induced by $\fa(X)$ at $q$, admits  infinitely many closed  leaves. By \cite{Camacho-Scardua} this implies that either $\fa(X)$ admits a holomorphic first integral, or it is a formal pull-back of a (nRnL) singularity. In the holomorphic first integral case it is easy to check that the flow is periodic in a neighborhood of the singularity and, therefore, by the Identity Principle, it is a periodic flow, corresponding  to a $\mathbb C^*$-action.
 Using now the main result in \cite{Camacho-Scardua} we conclude that the flow is a holomorphic pull-back of a linear periodic flow, admitting a holomorphic first integral of type $f^n g ^m$, for some $n,m \in \mathbb N$ and some $f, g$ holomorphic in $N$. Assume now that we are in the (nRnL) case. Then the  holonomy group of the separatrix germ of the singularity $q$ contained in the closure of the  orbit $L_0$, is a (nRnL) map. Since $L_0$ is topologically a cylinder $S^1 \times \mathbb R$, the fundamental group of $L_0$ is generated by some rational power of the loop $\gamma$ that originates the local separatrix holonomy. Hence, the holonomy group of $L_0$ is generated by this (nRnL) map, proving that $L_0$ is a (nRnL) orbit.
 Finally, if $L_0$ is closed in $N$ then, as above,  there infinitely many closed orbits for its holonomy group (no matter what the orbits in $\xi(X)$ converging to $L_0$ accumulate or not some singularity of $X$). Thus, according to Lemma~\ref{Lemma:perezmarcocyclic} this implies that the holonomy group of $L_0$ is either a finite group or it is generated by a periodic map and a (nRnL) map. In the finite case Theorem~\ref{Theorem:stability} implies that there is a meromorphic first integral. In the remaining case we have a (nRnL) orbit.

\vglue.1in

\noindent{\bf Case 2}. $L_0$ is a simply-connected orbit.
In this case if $L_0$ is not closed in $N$ then there are two possibilities.
Either $L_0$ accumulates at some singularity $q \in \sing(X)$, or $L_0$ accumulates at some orbit $L_0 ^\prime$ in $N$. If $L_0$ only accumulates at singularities, then by Remmert-Stein theorem, the closure $\ov{L_0}\subset $ is an analytic subset of $N$, of dimension one. Nevertheless, this set is topologically isomorphic to a Riemman sphere, therefore it is compact, which is not possible in a Stein manifold. Thus $L_0$ cannot accumulate only at singularities and must accumulate at some orbit $L_0 ^\prime$.
Since $L_0$ is a convergence orbit for $\xi(X)$, we conclude that the orbit $L_0 ^\prime$ is a convergence orbit for $\xi(X)$.
If the orbit $L_0^\prime$ is periodic then
we have again a periodic orbit in the convergence set of $\xi(X)$ and we can report to Case 1 above.
Assume now that $L_0^\prime$ is a simply-connected orbit. If $L_0^\prime$ accumulates at some periodic orbit $L_0^{\prime \prime}$ then we are in the same situation above, replacing $L_0^\prime$ by $L_0^{\prime \prime}$. The remaining possibility is that   $L_0^\prime$  accumulates only at simply-connected orbits. Finally, if $L_0$ is closed in $N$, then we have a simply-connected closed orbit in the convergence set of $\xi(X)$.

\section{Proof of Theorem~\ref{Theorem:affinesurface}: flows  on affine surfaces}
In this section we study holomorphic flows on affine surfaces. By an {\it affine surface} we mean an algebraic surface embedded in some affine space $\mathbb C^m$. Such a complex manifold is Stein. A classical result due to Kodaira states that any Stein manifold can be embedded into some affine space. We consider the case
where the manifold is algebraic. Also, we consider the case of algebraic vector fields, i.e., vector fields given by polynomial coordinates on the affine surface. Let $X$ be an algebraic vector field on an affine surface $N\subset \mathbb C^m$. Denote by $\ov{N}\subset \mathbb CP^m$ the projectivization of $N$, i.e., the corresponding projective variety $\ov{N}\subset \mathbb C P^m$ such that $\ov{N}\cap \mathbb C^m= N$. For sake of simplicity, we also assume that $\ov{N}$ is non-singular, so that this is a complex manifold with the natural structure. We shall refer to this by saying that $N$ is {\it regular}. Since $X$ is algebraic, the foliation $\fa$  induced by $X$ on $N$ extends to a holomorphic foliation with singularites $\ov{\fa}$ on $\ov{N}$.
Such a foliation is given by a rational one-form $\ov{\omega}$ on $\ov{N}$.
We shall refer to the pair $(\ov{N}, \ov{\fa})$ as the {\it projectivization} of the pair $(N,\fa)$.
The divisor $D:= \ov{N}\setminus N$ is a finite union of projective curves, each of which may be invariant or not by $\ov{\fa}$. Recall that a singularity
is a {\it generalized curve\/} if its reduction of singularities (cf. \cite{C-LN-S1}, \cite{seidenberg}) by the blowing-up process, produces only non degenerate (i.e., no saddle-node) singularities \cite{C-LN-S1}. Thus after the reduction of singularities, all singularities are of the local irreducible form
 $xdy - \la ydx +...=0$ and $\la\notin{\mathbb Q}_+$. We are therefore excluding one of the possible  final products of the reduction of singularites: the case where the singularity is a {\it saddle-node}, which is a singularity of the form $y^{k+1}dx -[x(1 + \la y^k)+\hot]dy=0$, $k\ge 1$.
\vglue.1in
Our main result for this situation is Theorem~\ref{Theorem:affinesurface} which we prove in what follows. 
\begin{proof}[Proof of Theorem~\ref{Theorem:affinesurface}]
Let us consider a  singularity $q \in D\cap \sing(\ov{\fa})$. Given a local separatrix $\Gamma_q$ of $\ov{\fa}$ through $q$, which is not contained in some local branch of $D$, we have two possibilities for the orbit $L(\Gamma_q)$ of $X$ that contains $\Gamma_q^*:=\Gamma_q \setminus \{q\}$:
(i) $L(\Gamma_q)$ is  a simply-connected  orbit. In this case the union $\Gamma_q \cup L(\Gamma_q)$ is an invariant analytical rational algebraic curve $C(\Gamma_q)\subset \ov{N}$.
(ii) $L(\Gamma_q)$ is a periodic orbit.

Since the number of local branches of $D$ through any point $q\in D$ is finite we conclude that if $q$ is dicritical then $\ov{\fa}$ has infinitely many invariant rational curves in $\ov{N}$ or it there is  an open subset in $N$ (the saturation of a sector with vertex at $q$) where the orbits of $X$ are all periodic.
In the first case by Darboux theorem (\cite{Jo}) there is a rational first integral for $\ov\fa$. In the second case, by Suzuki the generic orbit of $X$ is periodic and there is a meromorphic first integral for $\fa$ in $N$ \footnote{In this case, by the main result of \cite{Mol}, under some mild conditions on the singularities in $D$, the first integral extends
as a rational first integral for $\ov{\fa}$ in $\ov{N}$. See the remark in the end of this section.}.

From now on we assume therefore that all the singularities of $\ov{\fa}$ are
non-dicritical. Also we may assume that  all the irreducible components of $D$ are invariant by $\ov{\fa}$.

As usual we denote  by $\xi(X)\subset N$ the subset of periodic orbits of $X$ in $N$. By Theorem~\ref{Theorem:flowsstability} we have two cases (notice that $\ov N$ is compact so that the set of convergence points of $\xi(X)$ is not empty in $\ov{N}$):
\vglue.1in
\noindent{\bf Case 1}. $X$ has some meromorphic first integral in $N$.
\vglue.1in
\noindent{\bf Case 2}. All the singularities of $X$ in $N$ are non-dicritical, $\xi(X)$ is a diverging set in $N$ and no closed periodic orbit in $N$ has
finite holonomy.

\vglue.1in
Let us first consider Case 2.
In this second case, since $\ov{N}$ is compact and $\xi(X)$ contains infinitely many orbits, there is some irreducible component $D_0\subset D$ which is contained in the set of convergence points of $\xi(X)$. Because all the singularities in $D$ are non-dicritical generalized curves (i.e., non-degenerate after the reduction of singularities with invariant exceptional divisor), and since $D$ is invariant, it follows that all components in $D$ are contained in the set of convergence points of $\xi(X)$. For sake of simplicity, we shall assume that the singularities in $D$ are already irreducible, so that we may assume that all the singularities in $D$ are non-degenerate. This does not imply any loss of generality. Indeed, this would be the final picture after the reduction of singularities by a finite number of quadratic blow-ups in the singularities of $\ov \fa$ in $D$ and we are considering the case where these singularities are non-dicritical. Therefore, the exceptional divisor together with the strict transform of $D$ would be connected and invariant and also a very ample divisor in the corresponding projective manifold obtained from $\ov N$.
Adopting this hypothesis  we proceed.
Let us fix a component $D_0\subset D$.
We introduce the {\it virtual holonomy} group of this component $\Hol^{\virt}(\ov{\fa}, D_0)$ as follows:
Fix a point $p\in D_0$ which is not an intersection point of several components of $D$, nor a singular point for $\ov{\fa}$. Given a transverse disc $\Sigma_p$ to $\fa$ centered at $p= \Sigma_p \cap D_0 = \Sigma \cap D$.
The {\it virtual holonomy group} of the leaf with respect to the transverse section $\Sigma_p$ and base point $p$ is defined as (\cite{C-LN-S2})
\[
\Hol^{\virt} (\fa, D_0, \Sigma_p,p)=\{ f \in\Diff
(\Sigma_p, p) \big|\tilde L_z = \tilde L_{f(z)}.
\forall z\in (\Sigma_p, p) \}
\]
The virtual holonomy group contains the holonomy group $\Hol(\fa,  L_p, \Sigma_p, p)$ of the leaf $p \in L_p = D_0 \setminus \sing(\fa)$, and consists of the map germs that preserve each of the leaves of the foliation.

\begin{Lemma}
\label{Lemma:closedleaf}
Let $\fa$ be holomorphic foliation defined in a neighborhood  $U$ of the origin  $0 \in \mathbb C^2$, with an isolated singularity at the origin. A closed leaf of $\fa$ in $U$ is analytic. A leaf that accumulates only at the singular point is contained in an invariant analytic curve. Let $\Gamma\subset U$ be a separatrix of the foliation through the origin, let $p \in \Gamma \setminus \{0\}$ and $\Sigma_p$ a small disc transverse to the foliation and centered at $p$. Then any closed leaf of $\fa$ intersecting $\Sigma_p$ induces a closed orbit for the holonomy group $\Hol(\fa, L_p, \Sigma_p, p)$ of the leaf $L_p$ and the same holds for the virtual holonomy group $\Hol^{\virt}(\fa, L_p, \Sigma_p, p)$.
\end{Lemma}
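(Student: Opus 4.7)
The plan is to derive (1) and (2) from the Remmert-Stein extension theorem and then to reduce (3) to the finiteness of the intersection of a closed analytic curve with a transverse disc.

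For (1) and (2), I would observe that a leaf $L$ of $\fa$ is locally a smooth holomorphic $1$-dimensional submanifold of $U\setminus\{0\}$. If $L$ is closed in $U$ then $L$ is a closed analytic subset of the complex manifold $U\setminus\{0\}$; since $\{0\}$ has codimension two while $L$ has pure dimension one, Remmert-Stein applied to $L\subset U\setminus\{0\}$ extends $L$ across $\{0\}$ to an analytic subset of $U$, and because $L$ is already closed in $U$ this extension equals $L$ itself, giving (1). For (2), if $L$ accumulates only at the origin then $L$ is closed in $U\setminus\{0\}$ (its only possible limit point outside itself is $0$, which is excised), so by the same argument Remmert-Stein promotes $\ov L = L\cup\{0\}$ to an analytic subset of $U$. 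This closure is $\fa$-invariant because $L$ is, yielding the desired invariant analytic curve through the origin.

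For (3), I fix a closed leaf $L'$ meeting $\Sigma_p$ and choose a point $z\in L'\cap\Sigma_p$. By (1), $L'$ is an analytic curve in $U$; taking $\Sigma_p$ to be relatively compact, $L'\cap\ov{\Sigma_p}$ is a compact analytic subset of the surface $U$, and since $\Sigma_p$ is transverse to $\fa$ and hence to $L'$ at each intersection point, this set is of pure dimension zero, so it is finite. The holonomy group $\Hol(\fa, L_p, \Sigma_p, p)$ consists of germs of diffeomorphisms of $(\Sigma_p, p)$ obtained by lifting loops in $L_p$, and every such lift stays inside a single leaf; therefore the orbit of $z$ under $\Hol(\fa, L_p, \Sigma_p, p)$ is contained in the finite set $L'\cap\Sigma_p$ and is a closed orbit. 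Since every element of the virtual holonomy group $\Hol^{\virt}(\fa, L_p, \Sigma_p, p)$ preserves leaves by its very definition, the identical inclusion and the same conclusion hold for the virtual holonomy orbit of $z$.

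The only delicate point I expect is the initial appeal to Remmert-Stein: one has to ensure that the leaf, a priori only an injectively immersed holomorphic submanifold of $U\setminus\{0\}$, is genuinely a closed analytic subvariety there. This is automatic once ``closed leaf'' is read in the ambient topology of $U$, because an injectively immersed complex submanifold with closed image is properly embedded and hence a closed analytic subvariety. After that, the rest of the argument is a direct combination of Remmert-Stein with the transversality of $\Sigma_p$.
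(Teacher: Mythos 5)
Your proposal is correct and follows essentially the same route as the paper's proof: Remmert--Stein extension across the origin for the first two assertions, and the discreteness of the transverse intersection $L'\cap\Sigma_p$ (a closed, zero-dimensional set) for the statement about holonomy and virtual holonomy orbits. Your added care about the leaf being a properly embedded, hence closed analytic, subvariety of $U\setminus\{0\}$ before invoking Remmert--Stein is a worthwhile clarification but does not change the argument.
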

\begin{proof}
Indeed, by Remmert-Stein extension theorem (\cite{gunning2}), a leaf which is closed in $U$ is analytic. Also by Remmert-Stein extension theorem, a leaf $L$ such that $\overline{L}\setminus L = \{0\}$ has analytic closure, because $\dim L=1 > 0=\dim(\overline{L}\setminus L)$. The last part follows from the fact that the intersection of two transverse analytic sets of dimension one in $\mathbb C^2$, is a closed set of points.
\end{proof}

Using then the above argumentation we conclude that:
\begin{Lemma}
Each virtual holonomy group of the irreducible components of $D$ is a group with infinitely many closed pseudo-orbits.
\end{Lemma}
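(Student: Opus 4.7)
The plan is to reduce the statement to a direct application of Lemma~\ref{Lemma:closedleaf} in a small neighborhood of a singularity lying on $D_0$. Concretely, I would fix a singularity $q \in D_0 \cap \sing(\ov\fa)$, take an open neighborhood $U$ of $q$ containing no other singularity of $\ov\fa$, choose a regular point $p \in (D_0 \cap U) \setminus \{q\}$, and a small disc $\Sigma_p \subset U$ transverse to $\ov\fa$ and centered at $p$. Since $D_0$ is contained in the set of convergence points of $\xi(X)$, the point $p$ is itself a convergence point, so one can extract a sequence of distinct points $z_n \in \Sigma_p$ with $z_n \to p$, each $z_n$ lying on a distinct periodic orbit $L_n \in \xi(X)$.

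The next step is to show that each $z_n$ has a closed (finite) pseudo-orbit under $G := \Hol^{\virt}(\ov\fa, D_0, \Sigma_p, p)$. Since $L_n \cong \bc^*$ is closed off $\sing(X)$, and since $\xi(X)$ is divergent in $N$ so that $L_n$ has no accumulation in $N$, the plaques of $L_n$ inside $U$ are local leaves of $\ov\fa\vert_U$ which, by the non-dicritical generalized-curve structure at $q$, are either closed in $U$ or accumulate only at $q$. Applying Lemma~\ref{Lemma:closedleaf} to each such plaque intersecting $\Sigma_p$ then yields that $L_n \cap \Sigma_p$ is finite near $p$ and that it realizes a closed orbit of both the holonomy and the virtual holonomy of $D_0 \cap U$ at $p$.

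Finally, the virtual holonomy of the leaf piece $D_0 \cap U$ based at $p$ embeds naturally into the global virtual holonomy $G$, since any germ of diffeomorphism of $\Sigma_p$ preserving the leaves of $\ov\fa\vert_U$ also preserves the leaves of $\ov\fa$. Consequently each $z_n$ has a closed pseudo-orbit under $G$, and since the $z_n$ are infinitely many and accumulate at $p$, this produces infinitely many closed pseudo-orbits of $G$ in any neighborhood of the origin of $\Sigma_p$, as required. The main obstacle will be the local control of $L_n \cap U$, namely ruling out that $L_n$ wraps densely around $D_0 \cap U$ and accumulates at regular points of $D_0$ other than $q$; this is precisely where the combination of the generalized-curve hypothesis at $q$, the topology $L_n \cong \bc^*$, and the divergence of $\xi(X)$ in $N$ must be exploited.
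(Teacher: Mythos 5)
Your strategy coincides with the paper's: the paper offers no written proof beyond ``using the above argumentation,'' and the intended argument is exactly what you outline --- use the fact that $D_0$ lies in the convergence set of $\xi(X)$ to produce infinitely many points $z_n\in\Sigma_p$ on pairwise distinct periodic orbits $L_n$, then invoke Lemma~\ref{Lemma:closedleaf} to see that each $z_n$ has a closed pseudo-orbit under $\Hol^{\virt}(\ov\fa,D_0,\Sigma_p,p)$. Two remarks on your execution. First, the detour through the singularity $q$ and the plaque-by-plaque analysis is unnecessary and slightly misplaced: the pseudo-orbit of $z_n$ is contained in the trace on $\Sigma_p$ of the \emph{global} leaf $L_n$, not of a single plaque, and the relevant discreteness comes for free from the global picture. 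Indeed $L_n$, being a periodic orbit, is closed off $\sing(X)$ in $N$ and its closure in $N$ is an analytic curve (Suzuki), while $\Sigma_p\setminus\{p\}\subset N$; so $L_n\cap(\Sigma_p\setminus\{p\})$ is discrete, exactly by the transverse-intersection argument in the last assertion of Lemma~\ref{Lemma:closedleaf}, with no appeal to the generalized-curve hypothesis at $q$ and no need to control how $L_n$ winds around $D_0\cap U$ away from the base point. Second, the ``main obstacle'' you flag --- whether $L_n\cap\Sigma_p$ can accumulate at $p$ itself --- is real but is not resolved by your sketch (nor addressed by the paper): the above only gives closedness of the pseudo-orbit in the punctured transversal, whereas the way closed pseudo-orbits are used in Lemma~\ref{Lemma:finiteorbits} and Proposition~\ref{Proposition:finiteorbitsnonresonant} (where parabolic germs are discarded because their orbits are infinite and accumulate at the fixed point) indicates that ``closed'' should mean closed in the whole transversal. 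To finish one should either note that each individual $\ov{L_n}\cap D_0$ is a closed invariant proper subset of $D_0$ so that a Baire argument lets you choose the base point $p$ outside $\bigcup_n\ov{L_n}$, making each trace $L_n\cap\Sigma_p$ finite, or argue directly that pseudo-orbits which are discrete off the fixed point still feed into Proposition~\ref{Proposition:finiteorbitsnonresonant}. As written, your proposal identifies the correct mechanism and sits at essentially the same level of detail as the paper's own one-line proof.
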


Therefore, each such group is either finite or it contains some (nRnL) map and
it is abelian formally but not analytically linearizable. Let $D_0$ and $D_0 ^\prime$ be two distinct components of $D$, intersecting at a {\it corner} $q=D_0 \cap D_0^\prime$. Since both components are invariant, this corner is a singularity of $\ov\fa$. Each of the components corresponds to a local separatrix of $\ov{\fa}$ through $q$. By the above lemma, each such local holonomy is a finite periodic map or a formally linearizable non-resonant map. We claim:
\begin{Claim}
Either both virtual holonomy groups of $D_0$ and $D_0^\prime$ are finite, or
both are abelian formally linearizable, both containing a (nRnL) map.

\end{Claim}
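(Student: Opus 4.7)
My plan is to analyze the corner $q = D_0 \cap D_0'$ through its eigenvalue ratio $\lambda \notin \mathbb Q_+$ (generalized-curve condition) and through the local holonomies $f \in H$ and $g \in H'$ of the two separatrices $D_0$ and $D_0'$ at $q$, whose linear parts are $e^{2\pi i \lambda}$ and $e^{2\pi i /\lambda}$ respectively. The preceding lemma combined with Proposition~\ref{Proposition:finiteorbitsnonresonant} forces each of $H$, $H'$ into one of two shapes --- finite cyclic, or abelian formally linearizable containing only periodic and \textup{(nRnL)} elements --- and in particular neither group can contain a hyperbolic element. Applied to $f \in H$, this excludes $\lambda \in \mathbb C \setminus \mathbb R$, so $\lambda \in \mathbb R \setminus \mathbb Q_+$, leaving the dichotomy: either $\lambda$ is irrational, or $\lambda \in \mathbb Q_-$.

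In the irrational case, $f$ is non-resonant and therefore not periodic, so the finite-cyclic alternative is excluded for $H$. Hence $H$ is abelian formally linearizable, and since $f \in H$ is non-resonant and non-periodic, the trichotomy of Proposition~\ref{Proposition:finiteorbitsnonresonant} forces $f$ itself to be \textup{(nRnL)}. The symmetric argument applied to $g$ and $H'$ (using that $1/\lambda$ is also irrational) yields the \textup{(nRnL)} side of the claim.

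In the rational case $\lambda = -p/q$ with $\gcd(p,q) = 1$, both $f$ and $g$ have roots of unity as linear parts. If $f$ were resonant non-linearizable, then classical Leau--Fatou theory (cf.\ \cite{camacho}) would give only finitely many periodic points of $f$ in a punctured neighborhood of $0$; but each $z \in \mathcal C(H)$ is in particular a periodic point of $f$, contradicting the infinitude of $\mathcal C(H)$. Hence $f$ is analytically linearizable, and by the Mattei--Moussu theorem the germ $(\ov{\fa}, q)$ admits a local holomorphic first integral of the form $F = x^p y^q \, u(x,y)$ with $u$ a unit. On a transverse disc $\Sigma$ to $D_0$ based near $q$, the restriction $F|_\Sigma$ has a zero of order $q$; every element of $H$ (transported along $D_0$ to $\Sigma$) preserves each leaf of $\ov{\fa}$ and hence $F|_\Sigma$, so in an analytic chart trivializing $F|_\Sigma$ it must be a rotation by a $q$-th root of unity. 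Thus $H$ embeds in $\mathbb Z/q\mathbb Z$; symmetrically $H'$ embeds in $\mathbb Z/p\mathbb Z$, so both are finite.

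The main obstacle I expect lies in the last step, namely verifying that \emph{every} element of the virtual holonomy group $H$ (not merely the cyclic subgroup generated by the local holonomy $f$) descends to a $q$-th root of unity rotation when transported to $\Sigma$ near $q$. This should follow from the very definition of virtual holonomy as the group of germs preserving each leaf of $\ov{\fa}$, together with the fact that near $q$ the leaves coincide with the level sets of the local first integral $F$; but a careful justification of the transport isomorphism along the leaf $D_0 \setminus \sing(\ov{\fa})$ and its compatibility with the local first-integral structure at the corner will be needed.
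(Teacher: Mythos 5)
The split according to the eigenvalue ratio $\lambda$ at the corner is a reasonable start, and your irrational sub-case is essentially correct: there the corner holonomy of each separatrix is non-resonant, so neither group can be finite, and the dichotomy of Proposition~\ref{Proposition:finiteorbitsnonresonant} pushes both groups into the abelian, formally linearizable, (nRnL)-containing alternative. The rational sub-case, however, contains a genuine gap, and it sits exactly where you flagged it. The assertion that every element of $H$, transported to a section $\Sigma$ near $q$, preserves the local first integral $F|_\Sigma=z^q u(z)$ is \emph{not} a consequence of the definition of virtual holonomy: an element of $\Hol^{\virt}(\ov{\fa},D_0,\Sigma_p,p)$ only preserves \emph{global} leaves of $\ov{\fa}$, and a global leaf may meet the neighborhood of $q$ in several local leaves lying on \emph{different} level sets of $F$ (they get identified far from $q$, for instance through the holonomy of another singularity on $D_0$). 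In fact the assertion is provably false whenever $H$ contains a (nRnL) element $h$: such an $h$ has linear part $e^{2\pi i\mu}$ with $\mu$ irrational, and $h(z)^q\,u(h(z))=z^q u(z)$ would force $h'(0)^q=1$. Since a periodic corner holonomy is perfectly compatible with $H$ containing (nRnL) elements arising elsewhere (the virtual holonomy is generated by much more than the single corner holonomy), your conclusion that $\lambda\in\mathbb Q_-$ implies both groups are finite is too strong and is not what the Claim asserts; the Claim only says the two groups fall into the same alternative.

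What is actually needed in the rational (resonant, linearizable) case is a mechanism to transport a hypothetical (nRnL) element of $H'$ into $H$, so that finiteness of one group forces finiteness of the other. The paper does this with the Dulac correspondence $\mathcal D_{q}(x)=x^{m/n}$ attached to the linearizable corner $nx\,dy+my\,dx=0$: a (nRnL) element $f\in H'$ commutes with the rational rotation given by the corner holonomy (both lie in the abelian group $H'$), hence has the special form $f(x)=\lambda x(1+\psi(x^m))$, and this special form is precisely what makes the multivalued conjugate $f^{\mathcal D_q}(y)=\lambda^{m/n}y(1+\psi(y^n))^{m/n}$ a single-valued, well-defined (nRnL) element of the virtual holonomy of $D_0$, contradicting finiteness of $H$. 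Your first-integral argument cannot substitute for this step, because preservation of $F|_\Sigma$ by all of $H$ is essentially equivalent to the finiteness you are trying to establish.
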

\begin{proof}
Assume that the virtual holonomy group of $D_0$ is finite. Then, the singularity $q=D_0 \cap D_0 ^\prime$ is a linearizable singularity of the local form $nxdy + mydx=0$ with $n, m \in \mathbb N$, in suitable coordinates $(x,y)$ centered at $q$, such that the components correspond to the coordinate axes. Assume by contradiction that the virtual holonomy group of $D_0^\prime$ is not finite. Let us state that $D_0$ is given by $(y=0)$ and $D_0^\prime$ by $(x=0)$. Choose a disc $\Sigma: \{x=a\}$, for $|a|>0$ small enough.
There is some element $f(y)$ in this virtual holonomy group which is a (nRnL) map. Because $f$ commutes with the local holonomy corresponding to the separatrix $(x=0)$ of $q$ contained in $D_0^\prime$ we conclude that $f(x)=\lambda x (1 + \psi(x^m)$, for some holomorphic one-variable function $\psi(z)$, vanishing at the origin $0 \in\mathbb C$. Thus we can induce a virtual holonomy map $f^{\mathcal D}\in \Hol^{\virt} (\ov{\fa}, D_0, \Sigma, a)$ by setting $f^{\mathcal D} (y) = \lambda ^{\frac{m}{n}}y (1 + \psi(y^n))^{\frac{m}{n}}$.
\begin{Remark}[Dulac map associated to a corner (cf. \cite{Camacho-Scardualiouville,camacho-scarduaasterisque,scarduaJDCS,scarduabounded})]
{\rm
  The local leaves of the foliation are given by $x^m y^n=cte$.
The Dulac correspondence is the correspondence obtained by following these  leaves $$
\mathcal D_{q}  \colon \Sigma_0 \to \Sigma_0^ \prime, \, \,\mathcal D_{q}(x)= \{x^{{m}/{n}}\}.
$$
from a local transverse section $\Sigma_0$ to $D_0$ to another transverse section $\Sigma_0 ^\prime$ to $D_0 ^\prime$.
Thus, we have associated to a map $f$ in the virtual holonomy of $D_0^\prime$, a well-defined  map  $f^{\mathcal D_{q}}$ in the virtual holonomy of $D_0$, such that it
satisfies the "equation"
\[
f^{\mathcal D_q}\circ\mathcal D_{q}=\mathcal D_{q}\circ f.
\]
}
\end{Remark}

Since the original map $f$ is a (nRnL) map, the same holds for the map $f^{\mathcal D_q}$. Thus the virtual holonomy group of $D_0$ also contains a (nRnL) map and therefore it cannot be finite, contradiction. This proves the claim.
\end{proof}

An iteration of this argument actually shows that:

\begin{Claim}
Either all virtual holonomy groups of components of $D$  are finite, or
all such virtual holonomy groups are  abelian formally linearizable,  containing  (nRnL) maps.
\end{Claim}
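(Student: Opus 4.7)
The plan is to upgrade the two-component statement of the preceding claim to a statement about all components of $D$ by a connectedness-plus-propagation argument on the dual graph of $D$. Let $\Ga(D)$ be the graph whose vertices are the irreducible components of $D$ and whose edges join two components that meet at a corner. As noted earlier in this section, after the chosen reduction of singularities in the non-dicritical generalized curve setting, the total divisor (exceptional divisor together with the strict transform of $D$) is connected and entirely invariant by $\ov{\fa}$. Consequently $\Ga(D)$ is a connected graph, so any two components $D_0, D_0' \subset D$ can be linked by a finite chain $D_0 = D^{(0)}, D^{(1)}, \dots, D^{(r)} = D_0'$ where each consecutive pair $D^{(i)}, D^{(i+1)}$ meets at a corner $q_i$.

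The key step is then a one-edge transfer lemma, which is precisely the content of the preceding claim: if $D^{(i)}$ and $D^{(i+1)}$ share a corner $q_i$, then the virtual holonomy groups $\Hol^{\virt}(\ov{\fa}, D^{(i)})$ and $\Hol^{\virt}(\ov{\fa}, D^{(i+1)})$ are either both finite or both contain an (nRnL) map and are abelian formally linearizable. Iterating this equivalence $r$ times along the chain yields that the virtual holonomy groups of $D_0$ and $D_0'$ are simultaneously finite or simultaneously of (nRnL) type. Since the two components were arbitrary, the dichotomy is global.

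The one remaining point is to verify that the one-edge transfer indeed uses only the local structure at a single corner $q_i$, and in particular does not depend on whether other singularities on the components are resonant or non-resonant. This is guaranteed by the Dulac correspondence $\mathcal D_{q_i}$ at the corner, which intertwines the virtual holonomies of the two components via $f^{\mathcal D_{q_i}} \circ \mathcal D_{q_i} = \mathcal D_{q_i} \circ f$; since being (nRnL) is preserved by this intertwining (a power-series computation identical to the one used in the preceding claim), the propagation from $D^{(i)}$ to $D^{(i+1)}$ is purely local at $q_i$.

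The main obstacle is a bookkeeping issue rather than a conceptual one: one must check that the chain of components can be chosen so that each successive intersection $q_i$ is indeed a non-degenerate linearizable corner to which the Dulac construction applies. This is ensured by the standing hypothesis that every singularity of $\ov{\fa}$ in $D$ is a non-dicritical generalized curve, and by working (without loss of generality, as already argued in the paragraph preceding the two claims) after the reduction of singularities, so that all corners are in fact non-degenerate with eigenvalue ratio in $\bq_+$ at the linearizable resonant type, or non-resonant of Siegel type -- in either case the Dulac transfer $f\mapsto f^{\mathcal D_{q_i}}$ is defined and preserves the (nRnL) condition.
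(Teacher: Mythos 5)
Your proposal is correct and follows essentially the same route as the paper: the text there simply asserts that ``an iteration of this argument'' (the one-corner transfer of the preceding claim, powered by the Dulac correspondence and the connectedness of the invariant divisor $D$ after reduction of singularities) yields the global dichotomy, and your dual-graph chain argument is exactly that iteration made explicit. The only cosmetic difference is that you spell out the bookkeeping on the chain of corners, which the paper leaves implicit.
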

In case all virtual holonomy groups are finite, using the same argumentation in
\cite{camacho-scarduadarbouxlocal} we conclude that there is a holomorphic first integral for the foliation in a neighborhood $W$ of $D$ in $\ov{N}$. In the remaining case, also as in \cite{camacho-scarduadarbouxlocal} we can conclude that there is a transversely formal closed meromorphic one-form $\hat \Omega$, defined on the divisor $D$, with simple poles, and which satisfies $\hat \Omega \wedge \ov{\omega}=0$.

Now, we use the fact that
since $\sing \ov{\fa} \cap D$ is nondicritical it follows that $D \subset {\ov N}$ is a very ample algebraic divisor  in the projective manifold $\ov N$ (see \cite{Hi-Mt} for the definition of very ample curve). This is a
consequence of Levi's Theorem \cite{Si}, \cite{C-LN-S2}. Therefore, according to
Hironaka-Matsumura Theorem \cite{Hi-Mt} $\hat\Omega$ is in fact a closed rational one-form on
$\ov N$ with simple poles.

As for the Case 1., where there is a meromorphic first integral for $\fa$ in $N$. We conclude, from Lemma~\ref{Lemma:closedleaf}, that all virtual holonomy groups of the components of $D$ are finite and as above, there is a holomorphic first integral for $\ov\fa$ defined in a neighborhood $W$ of $D$.
Now, since $\ov{N}\setminus D=N\subset \mathbb C^m$ is a Stein manifold, this implies, as in \cite{C-LN-S2}, that any meromorphic function or form, defined in a neighborhood of $D$ in $\ov{N}$ extends to $\ov{N}$ as a rational function or form. Therefore, we conclude that, in Case 1, as well as in the remaining possibility for Case 2, there is a rational first integral for $\ov{\fa}$ defined in $\ov{N}$.
This ends the proof of Theorem~\ref{Theorem:affinesurface}.
\end{proof}

\begin{Remark}
\label{Remark:complements}
{\rm Regarding the statement of Theorem~\ref{Theorem:affinesurface} we can add:
\begin{enumerate}
\item The case where the number of periodic orbits of $X$ in $N$ is finite is also addressable. Indeed, in this case if there is some dicritical singularity then $\ov \fa$ exhibits infinitely many invariant rational curves (i.e., compact curves diffeomorphic to the Riemann sphere $\ov {\mathbb C} = \mathbb C \cup \{\infty\}$). By Darboux theorem the foliation admits a rational first integral. Thus we may assume that the singularities are non-dicritical. In this case the  holonomy groups of the components of the divisor $D$ are groups of germs of holomorphic diffeomorphisms without fixed points: indeed, any fixed point outside the origin, of a certain holonomy map,  corresponds to a closed lifting of a closed path in the base leaf, generating this holonomy map. The leaf containing the fixed point  cannot be simply-connected, contradiction.
Thus, by Nakai density theorem these holonomy groups are solvable. Using then the techniques developed in \cite{Camacho-Scardualiouville} and \cite{scarduaJDCS} we can show that the foliation is given by a closed rational one-form with simple poles also in this case.

\item As for the case the foliation $\fa$ exhibits a meromorphic first integral on $N$, we can study its extension to a rational first integral on $\ov N$ as follows: from the work of Mol in \cite{Mol}, under some mild conditions on the singularities on $D$, there is some singularity in $D$ exhibiting a separatrix not contained in $D$. In this case, the extension of the first integral to $D$ is made starting from the separatrix to the irreducible component of $D$ that contains the referred singularity and then to all the singularities contained in this component. Then we extend the first integral to the corners and to the adjacent components of $D$, and so on. Thus, the only case where the first integral cannot be extended to $D$, is when $D$ contains all the separatrices of singularities of $\ov\fa$ in $D$. Nevertheless, in this case, according to the work of Licanic (\cite{licanic}), under some conditions on the Picard group $Pic(\ov N)$, we can assure that the foliation is a logarithmic foliation, i.e., given by a simple poles closed rational one-form.

\end{enumerate}
}
\end{Remark}
\bibliographystyle{amsalpha}

\end{document}